\documentclass[10pt]{amsart}
\usepackage{amsfonts,amscd,amsthm,amsgen,amsmath,amssymb}
\usepackage[all]{xy}
\usepackage[vcentermath]{youngtab}
\usepackage{graphicx}
\usepackage{hyperref}

\newtheorem{theorem}{Theorem}[section]
\newtheorem{lemma}[theorem]{Lemma}
\newtheorem{corollary}[theorem]{Corollary}
\newtheorem{proposition}[theorem]{Proposition}

\theoremstyle{definition}
\newtheorem{definition}[theorem]{Definition}
\newtheorem{example}[theorem]{Example}

\theoremstyle{remark}
\newtheorem{remark}[theorem]{Remark}

\numberwithin{equation}{section}

\begin{document}

\title[Equivariant cohomology of slice groupoids]{Equivariant cohomology of slice groupoids}
\author{Zhenxi Huang}


\begin{abstract}
Let $G$ be a compact Lie group, let $M$ be a smooth manifold equipped with a $G$-action, and let $\pi:M\rightarrow M/G$ denote the quotient map. Then the action is encoded by the action groupoid $G\ltimes M$. Moreover, the slice theorem implies that for each point $y\in M/G$, there is a neighborhood $U_y\subset M/G$ of $y$ such that \begin{equation*} \pi^{-1}(U_y)\cong G\times_{G_x}S_x, \end{equation*} where $x$ is a point in the orbit corresponding to $y$, $S_x$ is a slice through $x$, and $G_x$ is the isotropy group of $x$. An alternative approach to describing group actions on spaces is through the language of groupoids. Local properties of Lie groupoids are often studied via linearization theorems (see, e.g., \cite{Wei,Zu,Fe,Sm}).

One can compute the equivariant cohomology $H_G^*(\pi^{-1}(U_y);\mathbb{R})$ of $\pi^{-1}(U_y)$ using either the Weil model or the Cartan model. By equivariant homotopy theory, the equivariant cohomology groups $H_G^*(\pi^{-1}(U_y);\mathbb{R})$ and $H_{G_x}^*(S_x;\mathbb{R})$ are isomorphic.

In this paper, we explicitly construct reduction chain maps between the Weil models, and respectively between the Cartan models, of $(\pi^{-1}(U_y),G)$ and $(S_x,G_x)$, and prove that they induce isomorphisms in equivariant cohomology. We then introduce the notion of slice (or locally linearizable) groupoids, which are locally modeled on Lie group actions on manifolds together with gluing data. Several examples and applications are discussed. In the last section, we generalize equivariant cohomology to these groupoids using sheaf-theoretic methods. We further show that when a slice groupoid arises from an action Lie groupoid, its equivariant cohomology coincides with the equivariant cohomology of the corresponding Lie group action on a manifold.

\end{abstract}
\thanks{Email address: huangzhenxi@jnu.edu.cn}
\thanks{	
	This work was supported by: 1. Guangzhou Municipal Science and Technology Bureau, Guangzhou Key Research and Development Program, Program No. 2024A04J3560; 2. Fundamental Research Funds for the Central Universities, Program No. 11623340}

\date{\today}



\maketitle


\section{Equivariant cohomology}\label{sec:intro}

\subsection{Introduction}
The topology and cohomology of a $G$-space $M$ and of the quotient space $M/G$ arise in many branches of geometry. Let $M$ be a manifold. In general, if $G$ does not act freely on $M$, the quotient space may be singular even though $M$ is smooth. From the viewpoint of equivariant homotopy theory, one is naturally led to the equivariant cohomology of $M$. Throughout this paper, group actions on manifolds are taken to be left actions. Let $E_G$ be a contractible space equipped with a free right $G$-action (see \cite{Gui1} for the existence of $E_G$). For a left $G$-space $M$, define the Borel construction by \begin{equation*} E_G\times_G M = (E_G\times M)/G, \end{equation*} where $G$ acts freely from the right on $E_G\times M$ by \begin{equation*} (p,x)\cdot g = (pg,g^{-1}\cdot x). \end{equation*} Equivalently, the quotient relation is \begin{equation*} [p,x] = [pg,g^{-1}\cdot x]. \end{equation*} The equivariant cohomology of $M$ is defined by \begin{equation*} H_G^*(M;\mathbb{R}) = H^*(E_G\times_G M;\mathbb{R}). \end{equation*} Throughout this paper, all cohomology groups are taken with coefficients in $\mathbb{R}$, unless otherwise stated. Accordingly, $H_G^*(M)$ denotes $H^*(E_G\times_G M;\mathbb{R})$, and all de Rham, Weil, and Cartan complexes are considered over $\mathbb{R}$.

Since the right $G$-action on $E_G$ is free, the induced right action \begin{equation*} (p,x)\cdot g=(pg,g^{-1}\cdot x) \end{equation*} on $E_G\times M$ is free. In particular, if $G$ acts freely on $M$, then $M/G$ is a manifold, and the equivariant cohomology is naturally isomorphic to the ordinary cohomology of $M/G$ \cite{Bor}. One may also consider a de Rham realization of the equivariant cohomology. Formally this is related to differential forms on the Borel construction $E_G\times_G M$. Since $E_G$ is generally infinite dimensional when $G$ is nontrivial, one usually uses finite-dimensional approximations, or an equivalent algebraic model such as the Weil or Cartan model.

A convenient way to compute the equivariant cohomology of $M$ is to replace the complex $\Omega^*(E_G\times_G M)$ by an algebraic model. The Weil model is the complex of basic elements $[\Omega(M)\otimes\Lambda(\mathfrak{g}^*)\otimes S(\mathfrak{g}^*)]_{\mathrm{bas}}$, while the Cartan model is the complex of invariant elements $[\Omega(M)\otimes S(\mathfrak{g}^*)]^G$, equipped with an appropriate differential. When $G$ is compact, the cohomology groups of both complexes are isomorphic to equivariant cohomology. The explicit constructions of the Weil and Cartan models are given as follows; see \cite{Gui1}.

Let $G$ be an $m$-dimensional Lie group, let $\{\xi_1,\ldots,\xi_m\}$ be a basis of its Lie algebra $\mathfrak{g}$, and let $\{x^1,\ldots,x^m\}$ be the dual basis of $\mathfrak{g}^*$. We use the Einstein summation convention $[\xi_i,\xi_j]=c_{ij}^k\xi_k$ for the Lie bracket. 
If $H\subset G$ is a closed Lie subgroup, choose $\{\xi_1,\ldots,\xi_t\}$ to be a basis of its Lie algebra $\mathfrak{h}$. Then
\begin{equation*}
	c_{ij}^k=0, 
\end{equation*}
when $i,j\leq t$ and $k>t$.

Let $\theta^i=x^i\otimes 1 \in \Lambda^1(\mathfrak{g}^*)\otimes S^0(\mathfrak{g}^*)$ and $z^i=1\otimes x^i\in \Lambda^0(\mathfrak{g}^*)\otimes S^1(\mathfrak{g}^*)$. Then the Weil algebra $W(\mathfrak{g}^*)=\Lambda(\mathfrak{g}^*)\otimes S(\mathfrak{g}^*)$ is generated by $\theta^1,\ldots,\theta^m,z^1,\ldots,z^m$, with $\theta^i\theta^j=(x^i\wedge x^j)\otimes 1$, $z^iz^j=1\otimes x^ix^j$, and
\begin{equation*}
\theta^{i_1}\cdots\theta^{i_p}z^{j_1}\cdots z^{j_q}
=(x^{i_1}\wedge\cdots\wedge x^{i_p})\otimes(x^{j_1}\cdots x^{j_q}).
\end{equation*} 

There are three operators on the Weil algebra. The Lie derivative on the Weil algebra is defined by
\begin{equation*}
\mathcal{L}_{{\xi_a}}\theta^b=-c^b_{ak}\theta^k
\end{equation*}
and
\begin{equation*}
\mathcal{L}_{{\xi_a}}z^b=-c^b_{ak}z^k.
\end{equation*}
The differential $\delta$ on the Weil algebra is defined by
\begin{equation*}
\delta\theta^a=z^a, \ \ \ \delta z^a=0.
\end{equation*}
The interior operator $\iota$ on the Weil algebra is defined by
\begin{equation*}
\iota_{\xi_a}\theta^b=\delta_a^b, \ \ \ \iota_{\xi_a}z^b=-c_{ak}^b\theta^k,
\end{equation*}
and these operators are extended as derivations to all of $W$. 

The degree of an element  $\theta^{i_1}\cdots\theta^{i_p}z^{j_1}\cdots z^{j_q}\in W(\mathfrak{g}^*)$ is defined to be $p+2q$. The collection of all degree $d$ elements in $W(\mathfrak{g}^*)$ is denoted by $W^d(\mathfrak{g}^*)$. From now on we use $\mathcal{L}_a$ and $\iota_a$ to replace $\mathcal{L}_{\xi_a}$ and $\iota_{\xi_a}$ to simplify the notation once the basis $\{\xi_a\}_{a=1}^m$ has been fixed.

Let $[\Omega(M)\otimes W(\mathfrak{g}^*)]_{\mathrm{bas},G}\subset\Omega(M)\otimes W(\mathfrak{g}^*)$ denote the subspace consisting of elements $\sigma\in\Omega(M)\otimes W(\mathfrak{g}^*)$ such that $\mathcal{L}_{a}\sigma=0$ and $\iota_{a}\sigma=0$. Note that $\mathcal{L}_a$ is computed by the Leibniz rule and
\begin{equation*}
\iota_{a}=\iota_a\otimes 1+ 1\otimes (-1)^{p}\iota_a
\end{equation*}
when applied to an element $\alpha^{p,q}\in\Omega^p(M)\otimes W^q(\mathfrak{g}^*)$ for $a=1,\cdots,m$. Then the Weil model is defined as the graded algebra
\begin{equation*}
	\operatorname{Weil}(M,G)=\bigoplus_{d\geq 0}\operatorname{Weil}^d(M,G)
\end{equation*}
where
\begin{equation*}
\operatorname{Weil}^d(M,G)=\bigoplus_{p+q=d}[\Omega^p(M)\otimes W^q(\mathfrak{g}^*)]_{\mathrm{bas},G}
\end{equation*}
 with the graded differential operator
\begin{equation*}
D=d\otimes 1 +(-1)^{p}1\otimes\delta.
\end{equation*}
The complex $\operatorname{Weil}(M,G)$ with differential $D$ is called the Weil model; in the rest of the paper we write $\operatorname{Weil}(M,G)$ for the differential graded algebra $(\operatorname{Weil}(M,G),D)$. We call an element $\tau\in\Omega(M)\otimes W(\mathfrak{g}^*)$ $G$-invariant if $\mathcal{L}_a\tau=0$ for every $a$, and $G$-horizontal if $\iota_a\tau=0$ for every $a$. If $\tau$ is both horizontal and invariant, then it is called a basic element. Thus the Weil model $\operatorname{Weil}(M,G)$ is the graded algebra of basic elements in $\Omega(M)\otimes W(\mathfrak{g}^*)$ equipped with the differential $D$. The operators $D$, $\iota_a$, and $\mathcal{L}_a$ satisfy the Cartan homotopy formula
\begin{equation*}
	\mathcal{L}_a=[D,\iota_a]=D\iota_a+\iota_a D.
\end{equation*} 

To construct the Cartan model, let
\begin{equation}
u^a=z^a+\frac{1}{2}c_{jk}^a\theta^j\theta^k
\end{equation}
define a change of variables. Then we have
\begin{equation*}
\begin{split}
du^a&=-c_{ij}^a\theta^iu^j\\
\mathcal{L}_{a}u^b&=-c_{ak}^bu^k\\
\iota_{a}u^b&=0.
\end{split}
\end{equation*}
Note that if $u^b$ is invariant (i.e. $\mathcal{L}_au^b=0$ for any $a$) then $u^b=z^b$.

After this change of variables, $W(\mathfrak{g}^*)$ is generated by $\{\theta^1,\ldots,\theta^m,u^1,\ldots,u^m\}$. Since $\iota_a\theta^b=\delta_a^b$, the horizontal subalgebra is $W(\mathfrak{g}^*)_{\mathrm{hor}}=1\otimes S(\mathfrak{g}^*)$, and the basic subalgebra is $W(\mathfrak{g}^*)_{\mathrm{bas}}=1\otimes S(\mathfrak{g}^*)^G\cong S(\mathfrak{g}^*)^G$, where $S(\mathfrak{g}^*)^G$ denotes the $G$-invariant subalgebra.
Let $\gamma=\sum_a\iota_a\otimes\theta^a$ be an endomorphism of $\Omega(M)\otimes W(\mathfrak{g}^*)$ such that 
\begin{equation*}
	\gamma(\omega\otimes\alpha)=\sum_{a}\iota_{a}\omega\otimes\theta^a\alpha,
\end{equation*}
Define
\begin{equation*}
\Phi=e^\gamma=1+\gamma+\frac{1}{2}\gamma^2+\frac{1}{3!}\gamma^3+\cdots,
\end{equation*}
Then $\Phi$ is an automorphism of $\Omega(M)\otimes W(\mathfrak{g}^*)$ known as the Mathai--Quillen isomorphism \cite{MQ}. In particular, $\Phi$ carries $[\Omega(M)\otimes W(\mathfrak{g}^*)]_{\mathrm{hor}}$ into $\Omega(M)\otimes [W(\mathfrak{g}^*)]_{\mathrm{hor}}$, hence we have the restriction
\begin{equation*}
\Phi:[\Omega(M)\otimes W(\mathfrak{g}^*)]_{\mathrm{bas}}\rightarrow [\Omega(M)\otimes S(\mathfrak{g}^*)]^G.
\end{equation*}
Moreover, $\Phi$ defines a differential $D_C$ on $[\Omega(M)\otimes S(\mathfrak{g}^*)]^G$ by
\begin{equation*}
D_C=\Phi D\Phi^{-1}=d\otimes 1- \sum_a\iota_a\otimes u^a.
\end{equation*}
The differential graded algebra $[\Omega(M)\otimes S(\mathfrak{g}^*)]^G$ with differential $D_C$ is called the Cartan model and is denoted by $\operatorname{Car}(M,G)$.

The Weil and Cartan models can be used to compute equivariant de Rham cohomology. This is a classical result; see \cite{Gui1}. Explicitly,
\begin{theorem}\label{thma4}
When $G$ is compact, the cohomology of the Weil model and the cohomology of the Cartan model are isomorphic to the equivariant de Rham cohomology.
\end{theorem}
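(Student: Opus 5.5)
The plan has two halves. First, the Mathai--Quillen isomorphism $\Phi$ identifies the Weil and Cartan models as complexes. Second, the Weil model computes $H^*(E_G\times_G M)$, which I would prove by a $G^*$-algebra argument in the style of \cite{Gui1}.

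\emph{Weil $\cong$ Cartan.} I will check that $\Phi=e^\gamma$ is an algebra automorphism of $\Omega(M)\otimes W(\mathfrak{g}^*)$ with inverse $e^{-\gamma}$. This holds because $\gamma$ is a sum of products of commuting odd derivations, hence an even derivation, and it is nilpotent. Next I compute $\Phi\iota_a\Phi^{-1}=1\otimes\iota_a$, which is immediate from $[\gamma,\iota_a\otimes1]=0$ and $[\gamma,1\otimes\iota_a]=-\iota_a\otimes 1$ up to sign. I also check that $\Phi$ commutes with $\mathcal{L}_a$, since $\gamma=\sum_a\iota_a\otimes\theta^a$ is $G$-invariant. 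It follows that $\Phi$ maps basic elements bijectively onto elements that are invariant and annihilated by $1\otimes\iota_a$. By the $\theta,u$ generators, these are exactly $[\Omega(M)\otimes S(\mathfrak{g}^*)]^G$. Since $D_C$ is defined as $\Phi D\Phi^{-1}$, $\Phi$ is a chain isomorphism, and the two cohomologies agree. The formula $D_C=d-\sum\iota_a\otimes u^a$ on this subspace is a routine computation that I do not need for the isomorphism.

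\emph{Weil model computes $H_G^*$.} I use finite-dimensional approximations $E_G^{(N)}$, for example Stiefel manifolds after embedding $G\subset U(n)$. These carry a free right $G$-action and are $N$-connected, so $H^k(E^{(N)}\times_G M)=H^k_G(M)$ for $k<N$. For a free action of compact $G$ on a manifold $P=E^{(N)}\times M$, choose a connection form $\theta\in\Omega^1(P)\otimes\mathfrak g$. The Chern--Weil/Cartan map $W(\mathfrak g^*)\to\Omega(P)$, $\theta^a\mapsto\theta^a$, $z^a\mapsto$ curvature components, is a $G^*$-morphism. The standard results I would then invoke are:
\begin{enumerate}
\item[(i)] for free actions of compact $G$, $\Omega(P)_{\mathrm{bas}}\cong\Omega(P/G)$, with cohomology $H^*(P/G)$;
\item[(ii)] for a $G^*$-algebra $A$ admitting a connection, the map $A_{\mathrm{bas}}\to (A\otimes W)_{\mathrm{bas}}$ is a quasi-isomorphism. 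This comes from a chain homotopy built out of the connection, contracting $W$ onto $\mathbb R$ in the appropriate basic sense;
\item[(iii)] a $G^*$-morphism $A\to B$ inducing an isomorphism in cohomology induces an isomorphism on $(\,\cdot\otimes W)_{\mathrm{bas}}$, for compact $G$.
\end{enumerate}
Result (iii) follows by averaging to invariants, which is exact for compact groups, followed by a spectral sequence in the $S(\mathfrak g^*)$-degree filtration of the Cartan model, whose $E_1$ page is $(H(A)\otimes S(\mathfrak g^*))^G$.

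Applying these to $\Omega(M)\to\Omega(E^{(N)}\times M)$, which is an isomorphism in cohomology in low degrees, gives
\[
H(\operatorname{Weil}(M,G))\cong H((\Omega(E^{(N)}\times M)\otimes W)_{\mathrm{bas}})\cong H(\Omega(E^{(N)}\times M)_{\mathrm{bas}})=H^*(E^{(N)}\times_G M)
\]
in degrees $<N$. Letting $N\to\infty$ gives the theorem.

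The main obstacle is (iii), together with degree-wise compatibility of the approximations. The spectral sequence must converge, which holds because the filtration is bounded in each total degree, since $\Omega(M)$ is finite-dimensional in form degree. The averaging step is where compactness of $G$ is essential: it commutes taking invariants with cohomology, since the identity component acts trivially on cohomology and the component group is finite. Alternatively, the whole second half can be cited as the classical equivariant de Rham theorem of \cite{Gui1}, which is what the paper does.
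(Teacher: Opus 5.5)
Your proposal is correct and is essentially the classical argument the paper relies on. The paper gives no proof of its own and simply cites \cite{Gui1}, where the result is obtained along the same lines: the Mathai--Quillen identification of the Weil and Cartan models, plus Cartan's theorem via finite-dimensional approximations of $E_G$ and $G^*$-algebras admitting connections.

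There are two small points to fix. First, with the paper's convention $\delta\theta^a=z^a$, the Chern--Weil map must send $z^a\mapsto dA^a$; it is $u^a$, not $z^a$, that goes to the curvature components $F_A^a$. Second, for disconnected $G$, your step (i) and the averaging argument need ``basic'' and ``invariant'' to mean invariance under all of $G$, not merely $\mathcal{L}_a=0$.
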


\subsection{Local models} \label{sec1.2}

Now we derive local properties of the equivariant cohomology. Assume that $G$ acts properly on $M$, and let
\begin{equation*}
	\pi:M\rightarrow M/G
\end{equation*}
be the quotient map. Let $y\in M/G$, and let $U_y$ be a sufficiently small neighborhood of $y$. By the slice theorem, for such a neighborhood $U_y\subset M/G$, the inverse image $\pi^{-1}(U_y)$ is $G$-equivariantly diffeomorphic to \begin{equation*} G\times_{G_x}S_x, \end{equation*} where $x\in\pi^{-1}(y)$, $S_x$ is a $G_x$-invariant slice, and $G_x$ acts freely from the right on $G\times S_x$ by \begin{equation*} (g,z)\cdot h = (gh,h^{-1}\cdot z), \qquad h\in G_x. \end{equation*} Thus the quotient relation is \begin{equation*} [g,z] = [gh,h^{-1}\cdot z]. \end{equation*} The left $G$-action on $G\times_{G_x}S_x$ is given by \begin{equation*} k\cdot[g,z] = [kg,z]. \end{equation*} By definition, \begin{equation*} H_G^*(G\times_{G_x}S_x;\mathbb R) = H^*\bigl(E_G\times_G(G\times_{G_x}S_x);\mathbb R\bigr). \end{equation*} Since the right $G$-action on $E_G$ restricts to a free right $G_x$-action, the same space $E_G$ may also be used as a model for $E_{G_x}$. Define \begin{equation*} \begin{aligned} f: E_G\times_G(G\times_{G_x}S_x) &\longrightarrow E_G\times_{G_x}S_x,\\ [p,[g,z]] &\longmapsto [pg,z]. \end{aligned} \end{equation*} We first check that $f$ is well defined with respect to the $G$-quotient. For $k\in G$, \begin{equation*} [p,[g,z]] = [pk,k^{-1}\cdot[g,z]] = [pk,[k^{-1}g,z]], \end{equation*} and hence \begin{equation*} f([pk,[k^{-1}g,z]]) = [pkk^{-1}g,z] = [pg,z]. \end{equation*} It is also well defined with respect to the $G_x$-quotient. Indeed, if \begin{equation*} [g,z] = [gh,h^{-1}z], \qquad h\in G_x, \end{equation*} then \begin{equation*} f([p,[gh,h^{-1}z]]) = [pgh,h^{-1}z] = [pg,z] \end{equation*} in $E_G\times_{G_x}S_x$. Define \begin{equation*} \begin{aligned} f^{-1}: E_G\times_{G_x}S_x &\longrightarrow E_G\times_G(G\times_{G_x}S_x),\\ [p,z] &\longmapsto [p,[e,z]]. \end{aligned} \end{equation*} This map is well defined. For $h\in G_x$, \begin{align*} f^{-1}([ph,h^{-1}z]) &= [ph,[e,h^{-1}z]]\\ &= [ph,[h^{-1},z]]\\ &= [p,[e,z]]. \end{align*} Moreover, \begin{equation*} f\circ f^{-1} = \operatorname{id}_{E_G\times_{G_x}S_x} \end{equation*} and \begin{equation*} f^{-1}\circ f = \operatorname{id}_{E_G\times_G(G\times_{G_x}S_x)}. \end{equation*} Therefore $f$ is a homeomorphism. Consequently, \begin{equation*} H_G^*(G\times_{G_x}S_x;\mathbb R) \cong H_{G_x}^*(S_x;\mathbb R). \end{equation*}

Let $x_1,x_2\in\pi^{-1}(y)$ with $g\cdot x_1=x_2$, and let $G_{x_i}$ denote the isotropy group of $x_i$. Then, for $h_1\in G_{x_1}$,
\begin{equation*}
	gh_1g^{-1}\cdot x_2=gh_1g^{-1}g\cdot x_1=gh_1\cdot x_1=g\cdot x_1=x_2,
\end{equation*}
hence the isotropy group of $x_2$ is $gG_{x_1}g^{-1}$. Note that the slices $S_{x_1}$ and $S_{x_2}$ are homeomorphic and the quotient spaces $S_{x_i}/G_{x_i}$ and $\pi^{-1}(U_y)/G$ are homeomorphic.

The preceding homeomorphism suggests that the local equivariant
cohomology of a proper \(G\)-action is determined by the isotropy
action on a slice. A chain-level version of this statement will be
proved in Theorem~\ref{thm:slice-reduction-quasi-isomorphism}. In
particular, for a sufficiently small \(G_x\)-invariant slice \(S_x\),
we will obtain
\[
H_G^*(\pi^{-1}(U_y);\mathbb R)
\cong
H_{G_x}^*(S_x;\mathbb R).
\]

Let $M$ be a manifold with an action of a compact Lie group $G$. In addition to equivariant cohomology, one may also consider other cohomology theories such as $H^*(M/G)$, $H^*(\Omega(M)_{\mathrm{bas},G})$.  The relations between these cohomology theories have been studied (see \cite{Tu} for more details).

In fact, when $M\cong \mathbb{R}^n$, the cohomology of the complex $\operatorname{Weil}^{*,0}(\mathbb{R}^n,G)$	\begin{equation*}
		0\rightarrow\Omega^0(\mathbb{R}^n)_{\mathrm{bas},G}\rightarrow\Omega^1(\mathbb{R}^n)_{\mathrm{bas},G}\rightarrow\cdots
\end{equation*}
vanishes in degree $i>0$. This follows from the de Rham-type theorem for orbit spaces \cite{Ver} together with the Conner conjecture \cite{Conner}. The Conner conjecture asserts that the orbit space of any action of a compact Lie group on $\mathbb{R}^n$ is contractible. The Conner conjecture was proved by R. Oliver \cite{OLIVER}. Related results can be found in Conner \cite{Conner}, Conner and Floyd \cite{CF1,CF2}, and Floyd \cite{Fol}.

\subsection{Free Lie group actions}

Let $G$ be a compact Lie group acting smoothly and freely on a smooth manifold $M$. Since a compact group action is proper, the orbit space $M/G$ is a smooth manifold and the quotient map
\begin{equation*}
	\pi:M\longrightarrow M/G
\end{equation*}
is a principal $G$-bundle.

Let $\mathfrak g$ be the Lie algebra of $G$. Since $G$ is compact, there exists a $G$-invariant Riemannian metric on $M$. The orthogonal complement of the tangent spaces to the $G$-orbits defines a $G$-invariant horizontal distribution and hence a principal connection
\begin{equation*}
	A\in\Omega^1(M;\mathfrak g).
\end{equation*}
The connection form satisfies
\begin{equation*}
	A(X_\xi)=\xi,
	\qquad
	\varphi_g^*A=\operatorname{Ad}_{g^{-1}}A
\end{equation*}
for every $\xi\in\mathfrak g$ and $g\in G$, where $X_\xi$ is the fundamental vector field generated by $\xi$ and $\varphi_g:M\rightarrow M$ is the diffeomorphism defined by the $G$-action. Let
\begin{equation*}
	F_A=dA+\frac12[A,A]\in\Omega^2(M;\mathfrak g)
\end{equation*}
and ${\xi_1,\ldots,\xi_m}$ be a basis of $\mathfrak g$, with dual basis
${x^1,\ldots,x^m}$, write
\begin{equation*}
	A=\sum_a A^a\xi_a,
	\qquad
	F_A=\sum_a F_A^a\xi_a.
\end{equation*}
Recall that the Weil algebra $W(\mathfrak g^*)$ is generated by elements
$\theta^a$ of degree $1$ and $u^a$ of degree $2$. The connection $A$
determines the Weil homomorphism
\begin{equation*}
	w_A:W(\mathfrak g^*)\longrightarrow\Omega(M)
\end{equation*}
defined on generators by
\begin{equation*}
	w_A(\theta^a)=A^a,
	\qquad
	w_A(u^a)=F_A^a.
\end{equation*}
The structure equations for $A$ and the Bianchi identity imply that $w_A$
commutes with the differentials and is compatible with the contraction and Lie
derivative operators.

Define
\begin{equation*}
	\mathcal W_A:
	\Omega(M)\otimes W(\mathfrak g^*)
	\longrightarrow
	\Omega(M)
\end{equation*}
by
\begin{equation*}
	\mathcal W_A(\omega\otimes\sigma)
	=
	\omega\wedge w_A(\sigma).
\end{equation*}
Its restriction to the basic subcomplex gives a chain map
\begin{equation*}
	\mathcal W_A:
	\operatorname{Weil}(M,G)
	\longrightarrow
	\Omega(M)_{\mathrm{bas}}.
\end{equation*}
Since the action is free and proper, pullback by the quotient map induces an
isomorphism of differential graded algebras
\begin{equation*}
	\pi^*:\Omega(M/G)
	\xrightarrow{\cong}
	\Omega(M)_{\mathrm{bas}}.
\end{equation*}
Consequently, the composition
\begin{equation*}
	(\pi^*)^{-1}\circ\mathcal W_A:
	\operatorname{Weil}(M,G)
	\longrightarrow
	\Omega(M/G)
\end{equation*}
is a quasi-isomorphism. Therefore
\begin{equation*}
	H^*(\operatorname{Weil}(M,G))
	\cong
	H_{\mathrm{dR}}^*(M/G)
	\cong
	H_G^*(M;\mathbb R).
\end{equation*}
The induced isomorphism in cohomology is independent of the chosen connection.

Now let $U_y\subset M/G$ be a sufficiently small open neighborhood and let
$S\subset\pi^{-1}(U_y)$ be a slice. Since the action is free, the isotropy group
of every point is trivial and
\begin{equation*}
	\pi^{-1}(U_y)\cong S\times G.
\end{equation*}
Moreover, the restriction
\begin{equation*}
	\pi|_S:S\longrightarrow U_y
\end{equation*}
is a diffeomorphism. If 
\begin{equation*}
	i_S:S\hookrightarrow\pi^{-1}(U_y)
\end{equation*}
denotes the inclusion, define
\begin{equation*}
	\mathcal W_S
	=
	i_S^*\circ\mathcal W_A:
	\operatorname{Weil}(\pi^{-1}(U_y),G)
	\longrightarrow
	\Omega(S).
\end{equation*}
Since
\begin{equation*}
	\Omega(S)=\operatorname{Weil}(S,{e}),
\end{equation*}
the map $\mathcal W_S$ may equivalently be regarded as a chain map
\begin{equation*}
	\mathcal W_S:
	\operatorname{Weil}(\pi^{-1}(U_y),G)
	\longrightarrow
	\operatorname{Weil}(S,{e}).
\end{equation*}
It is a quasi-isomorphism and hence induces an isomorphism
\begin{equation*}
	H^*(\operatorname{Weil}(\pi^{-1}(U_y),G))
	\cong
	H^*(\operatorname{Weil}(S,{e}))
	=
	H_{\mathrm{dR}}^*(S).
\end{equation*}

The analogous statement for the Cartan model follows from the Mathai–Quillen
isomorphism \cite{MQ} .

When the $G$-action is not free, the quotient map is no longer a principal
$G$-bundle, and a global principal connection form satisfying
$A(X_\xi)=\xi$ cannot exist, since some fundamental vector fields may vanish.
In the next section, we construct a local reduction map adapted to the slice
model $G\times_{G_x}S_x$.

\section{Reduction over an induced \(G\)-manifold}
\label{sec:reduction-induced-space}

Let \(G\) be a compact Lie group, let \(H\subset G\) be a closed
subgroup, and let \(S\) be a smooth \(H\)-manifold. We consider the
induced \(G\)-manifold
\begin{equation*}
	M=G\times_H S,
\end{equation*}
where \(H\) acts freely on \(G\times S\) from the right by
\begin{equation}
	(g,s)\cdot h=(gh,h^{-1}\cdot s).
	\label{eq:H-action-product}
\end{equation}
The equivalence class of \((g,s)\) is denoted by \([g,s]\). The group
\(G\) acts on \(M\) from the left by
\begin{equation*}
	k\cdot[g,s]=[kg,s].
\end{equation*}

Let $\jmath:H\hookrightarrow G$ denote the inclusion homomorphism, and let
\begin{equation*}
	i:S\longrightarrow G\times_H S,
	\qquad
	i(s)=[e,s],
\end{equation*}
be the natural inclusion. We regard \(G\times_H S\) as an \(H\)-manifold
by restricting the \(G\)-action along \(\jmath\). Then \(i\) is
\(H\)-equivariant. Indeed, for every \(h\in H\),
\begin{equation*}
	i(h\cdot s)
	=[e,h\cdot s]
	=[h,s]
	=h\cdot[e,s]
	=h\cdot i(s).
\end{equation*}

This equivariant pair
\begin{equation*}
	(\jmath,i):(H,S)\longrightarrow (G,G\times_H S)
\end{equation*}
will be used to define the reduction morphisms in both the Weil and
Cartan models.

\subsection{Reduction map of the Weil and Cartan models}
\label{subsec:Weil-reduction}

We use the standard notation
\begin{equation*}
	W(\mathfrak g^*)
	=
	\Lambda(\mathfrak g^*)\otimes S(\mathfrak g^*)
\end{equation*}
for the Weil algebra of \(\mathfrak g^*\). The Lie algebra homomorphism
\[
\jmath:\mathfrak h\hookrightarrow\mathfrak g
\]
induces a morphism of differential graded algebras
\begin{equation*}
	W(\jmath):
	W(\mathfrak g^*)\longrightarrow W(\mathfrak h^*).
\end{equation*}
On generators, \(W(\jmath)\) is induced by the restriction map
\begin{equation*}
	\jmath^*:\mathfrak g^*\longrightarrow\mathfrak h^*.
\end{equation*}
More explicitly, if \(\lambda\in\mathfrak g^*\), then
\begin{equation*}
	W(\jmath)(\theta_\lambda)
	=
	\theta_{\jmath^*\lambda},
	\qquad
	W(\jmath)(u_\lambda)
	=
	u_{\jmath^*\lambda}.
\end{equation*}

Define
\begin{equation*}
	\widetilde{\operatorname{Red}}_W
	=
	i^*\otimes W(\jmath):
	\Omega(G\times_H S)\otimes W(\mathfrak g^*)
	\longrightarrow
	\Omega(S)\otimes W(\mathfrak h^*).
\end{equation*}
Thus
\begin{equation*}
	\widetilde{\operatorname{Red}}_W
	(\omega\otimes\sigma)
	=
	i^*\omega\otimes W(\jmath)(\sigma).
\end{equation*}

\begin{lemma}
	\label{lem:naturality-Weil-operators}
	For every \(\eta\in\mathfrak h\), the map
	\(\widetilde{\operatorname{Red}}_W\) satisfies
	\begin{align}
		\iota_\eta^H
		\circ\widetilde{\operatorname{Red}}_W
		&=
		\widetilde{\operatorname{Red}}_W
		\circ\iota_{\jmath(\eta)}^G,
		\label{eq:naturality-contraction}
		\\
		\mathcal L_\eta^H
		\circ\widetilde{\operatorname{Red}}_W
		&=
		\widetilde{\operatorname{Red}}_W
		\circ\mathcal L_{\jmath(\eta)}^G,
		\\
		D_H
		\circ\widetilde{\operatorname{Red}}_W
		&=
		\widetilde{\operatorname{Red}}_W
		\circ D_G.
		\label{eq:naturality-differential}
	\end{align}
\end{lemma}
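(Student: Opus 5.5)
Because $\widetilde{\operatorname{Red}}_W=i^*\otimes W(\jmath)$ is a tensor product of two maps, I will prove each identity one tensor factor at a time and then combine the factors with the sign conventions fixed earlier. For contraction these conventions are $\iota_a=\iota_a\otimes 1+(-1)^p\,1\otimes\iota_a$ on $\Omega^p\otimes W$; for the differential they are $D=d\otimes1+(-1)^p1\otimes\delta$. The Lie derivative is a degree-zero derivation and acts by the Leibniz rule. The key point is that both $i^*$ and $W(\jmath)$ preserve form degree, so the signs $(-1)^p$ agree on both sides. It therefore suffices to check, for $\eta\in\mathfrak h$, the following relations on each factor.
\begin{align*}
&\iota_\eta\, i^*=i^*\iota_{\jmath(\eta)},\quad \mathcal L_\eta\, i^*=i^*\mathcal L_{\jmath(\eta)},\quad d\,i^*=i^*d \quad\text{on }\Omega(G\times_HS),\\
&\iota_\eta W(\jmath)=W(\jmath)\iota_{\jmath(\eta)},\quad \mathcal L_\eta W(\jmath)=W(\jmath)\mathcal L_{\jmath(\eta)},\quad \delta W(\jmath)=W(\jmath)\delta \quad\text{on }W(\mathfrak g^*).
\end{align*}

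For the form factor, $d i^*=i^*d$ holds because pullback commutes with $d$. The other two relations follow from the $H$-equivariance of $i$, which was verified just before the lemma. Differentiating $i(\exp(t\eta)\cdot s)=\exp(t\eta)\cdot i(s)$ shows that the fundamental vector field $X_\eta^S$ is $i$-related to $X^{M}_{\jmath(\eta)}$. For $i$-related vector fields the identities $\iota_{X^S_\eta}i^*\omega=i^*\iota_{X^M_{\jmath(\eta)}}\omega$ and $\mathcal L_{X^S_\eta}i^*=i^*\mathcal L_{X^M_{\jmath(\eta)}}$ are standard. The second one also follows from the first by Cartan's formula. Whatever sign convention the paper uses for the form-level $\mathcal L_a$, it is applied uniformly on $S$ and on $M$, so the convention does not affect the argument.

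For the Weil factor, $W(\jmath)$ is an algebra map, and both sides of each identity are derivations along $W(\jmath)$. It therefore suffices to verify them on the generators $\theta_\lambda$ and $z_\lambda$ (equivalently $u_\lambda$). I will use the basis adapted to $\mathfrak h$, namely $\xi_1,\dots,\xi_t$ spanning $\mathfrak h$. In this basis $\jmath^*x^k$ is $x^k|_{\mathfrak h}$ for $k\le t$ and $0$ for $k>t$, so $W(\jmath)\theta^k=\theta^k$ and $W(\jmath)z^k=z^k$ for $k\le t$, while both vanish for $k>t$. The relation with $\delta$ is immediate from $\delta\theta=z$ and $\delta z=0$. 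For contraction, take $a\le t$. Then $\iota_a\theta^b=\delta^b_a$ is compatible with restriction. On the other generator, $W(\jmath)\iota_az^b=-\sum_{k\le t}c^b_{ak}\theta^k$. When $b\le t$ this equals $\iota^H_az^b$. When $b>t$ it vanishes because $c^b_{ak}=0$ for $a,k\le t$, matching $\iota^H_a(0)=0$. The Lie derivative is handled by the identical computation with $\theta$ replaced by $z$. A basis-free phrasing is the following: each operator is built from the coadjoint action, and $\jmath^*$ intertwines the coadjoint actions of $\mathfrak h$ because $\mathfrak h$ is a subalgebra.

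The only delicate step is the bookkeeping of signs when the factorwise identities are assembled into the statements for $\Omega\otimes W$. I expect this step to close once one notes that $i^*$ and $W(\jmath)$ are both degree-preserving and even, so they commute with every $(-1)^p$ factor. I would write out the case of $\iota$ explicitly and note that the cases of $D$ and $\mathcal L$ follow in the same way.
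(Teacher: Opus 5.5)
Your proposal is correct and follows the same route as the paper. The form factor is handled by the $i$-relatedness of the fundamental vector fields, the Weil factor by the naturality of $W(\jmath)$, and the two are assembled using that $i^*$ and $W(\jmath)$ preserve degree, so the signs $(-1)^p$ agree. The one difference is that you check the Weil-algebra identities on generators in an $\mathfrak h$-adapted basis, using $c^k_{ij}=0$ for $i,j\le t<k$, whereas the paper simply cites ``functoriality'' for them.
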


\begin{proof}
	Since \(i\) is \(H\)-equivariant with respect to the inclusion
	\(\jmath:H\hookrightarrow G\), the fundamental vector fields are
	\(i\)-related:
	\begin{equation}
		T_si\bigl(X_\eta^S(s)\bigr)
		=
		X_{\jmath(\eta)}^{G\times_HS}\bigl(i(s)\bigr).
	\end{equation}
	Therefore,
	\begin{align*}
		i^*\circ\iota_{X_{\jmath(\eta)}}
		&=
		\iota_{X_\eta}\circ i^*,
		\\
		i^*\circ\mathcal L_{X_{\jmath(\eta)}}
		&=
		\mathcal L_{X_\eta}\circ i^*.
	\end{align*}
	On the Weil algebra, functoriality gives
	\begin{align*}
		\iota_\eta\circ W(\jmath)
		&=
		W(\jmath)\circ\iota_{\jmath(\eta)},
		\\
		\mathcal L_\eta\circ W(\jmath)
		&=
		W(\jmath)\circ\mathcal L_{\jmath(\eta)},
		\\
		d_{W(\mathfrak h)}\circ W(\jmath)
		&=
		W(\jmath)\circ d_{W(\mathfrak g^*)}.
	\end{align*}
	Combining these identities with the definition of the total
	contraction, Lie derivative, and differential proves
	\eqref{eq:naturality-contraction}--\eqref{eq:naturality-differential}.
\end{proof}

\begin{proposition}
	\label{prop:Weil-reduction-chain-map}
	The map \(\widetilde{\operatorname{Red}}_W\) sends \(G\)-basic elements
	to \(H\)-basic elements. Hence it restricts to a morphism of
	differential graded algebras
	\begin{equation*}
		\operatorname{Red}_W:
		\operatorname{Weil}(G\times_H S,G)
		\longrightarrow
		\operatorname{Weil}(S,H).
	\end{equation*}
\end{proposition}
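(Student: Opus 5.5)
The plan is to derive everything from Lemma~\ref{lem:naturality-Weil-operators}, together with the fact that $\widetilde{\operatorname{Red}}_W=i^*\otimes W(\jmath)$ is a tensor product of algebra morphisms.

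First I will fix the target notion. An element of $\Omega(S)\otimes W(\mathfrak h^*)$ is $H$-basic when it is annihilated by $\iota^H_\eta$ and $\mathcal L^H_\eta$ for every $\eta\in\mathfrak h$. It suffices to check this on a basis $\eta=\xi_1,\dots,\xi_t$ of $\mathfrak h$, since both operators depend linearly on $\eta$.

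Now let $\sigma$ be $G$-basic. Then $\iota^G_\xi\sigma=0$ and $\mathcal L^G_\xi\sigma=0$ for every $\xi\in\mathfrak g$, and in particular for $\xi=\jmath(\eta)$. Applying \eqref{eq:naturality-contraction} gives
\[
\iota^H_\eta\widetilde{\operatorname{Red}}_W(\sigma)=\widetilde{\operatorname{Red}}_W(\iota^G_{\jmath(\eta)}\sigma)=0.
\]
The Lie derivative identity of the lemma gives the analogous vanishing for $\mathcal L^H_\eta$. Hence $\widetilde{\operatorname{Red}}_W(\sigma)$ is $H$-basic.

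Next, the degree. A summand in $\Omega^p\otimes W^q$ maps into $\Omega^p(S)\otimes W^q(\mathfrak h^*)$, since $i^*$ preserves form degree and $W(\jmath)$ sends $\theta\mapsto\theta$ and $u\mapsto u$. So total degree is preserved. (One may note $W(\jmath)(z_\lambda)=z_{\jmath^*\lambda}$ as well, because $c^k_{ij}=0$ for $i,j\le t<k$, but this is not needed.)

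Compatibility with the differential is exactly \eqref{eq:naturality-differential}. Here one checks that the sign $(-1)^p$ in $D=d\otimes1+(-1)^p1\otimes\delta$ matches on both sides, which it does because $i^*$ preserves $p$.

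Multiplicativity follows from the product structure. The product on $\Omega\otimes W$ is the graded tensor product, with sign $(-1)^{qp'}$ when moving $W^q$ past $\Omega^{p'}$. Since $i^*$ and $W(\jmath)$ are graded algebra maps and preserve degrees, $i^*\otimes W(\jmath)$ respects these signs. It also sends $1$ to $1$.

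Restricting to basic subcomplexes therefore gives a DGA morphism $\operatorname{Red}_W$.

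The only step needing care is the one I expect to be the main obstacle, namely the sign bookkeeping. One must confirm that the total contraction $\iota_a\otimes1+(-1)^p1\otimes\iota_a$ and the Leibniz extension of $\mathcal L$ intertwine with $i^*\otimes W(\jmath)$. This holds because the signs depend only on $p$, which is preserved, so the componentwise identities from the lemma combine term by term.
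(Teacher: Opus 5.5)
Your proposal is correct and takes essentially the same route as the paper. Both arguments get $H$-basicness from the intertwining identities of Lemma~\ref{lem:naturality-Weil-operators} applied with $\xi=\jmath(\eta)$, the chain-map property from \eqref{eq:naturality-differential}, and multiplicativity from $i^*$ and $W(\jmath)$ being algebra maps; your degree and sign bookkeeping (the signs depend only on the form degree $p$, which $i^*$ preserves) just spells out what the paper leaves implicit.
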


\begin{proof}
	Let
	\[
	\alpha\in\operatorname{Weil}(G\times_HS,G).
	\]
	For every \(\eta\in\mathfrak h\), Lemma
	\ref{lem:naturality-Weil-operators} gives
	\begin{align*}
		\iota_\eta^H
		\bigl(\widetilde{\operatorname{Red}}_W(\alpha)\bigr)
		&=
		\widetilde{\operatorname{Red}}_W
		\bigl(\iota_{\jmath(\eta)}^G\alpha\bigr)
		=0,
		\\
		\mathcal L_\eta^H
		\bigl(\widetilde{\operatorname{Red}}_W(\alpha)\bigr)
		&=
		\widetilde{\operatorname{Red}}_W
		\bigl(\mathcal L_{\jmath(\eta)}^G\alpha\bigr)
		=0.
	\end{align*}
	Thus \(\widetilde{\operatorname{Red}}_W(\alpha)\) is \(H\)-basic.
	
	Equation \eqref{eq:naturality-differential} shows that
	\[
	D_H\circ\operatorname{Red}_W
	=
	\operatorname{Red}_W\circ D_G.
	\]
	Since both \(i^*\) and \(W(\jmath)\) preserve products,
	\(\operatorname{Red}_W\) is a morphism of differential graded
	algebras.
\end{proof}

Next, we record the functoriality of the Cartan model with respect to
equivariant pairs.  This general statement will also be used later for
the comparison morphisms between local slice models.

\begin{proposition}[Functoriality of the Cartan model]
	\label{prop:Cartan-functoriality}
	Let \(K_1\) and \(K_2\) be compact Lie groups, let \(N_1\) be a smooth
	\(K_1\)-manifold, and let \(N_2\) be a smooth \(K_2\)-manifold.
	Suppose that
	\[
	\varphi:K_1\longrightarrow K_2
	\]
	is a Lie group homomorphism and
	\[
	f:N_1\longrightarrow N_2
	\]
	is a smooth \(\varphi\)-equivariant map, namely,
	\[
	f(k\cdot x)
	=
	\varphi(k)\cdot f(x)
	\]
	for every
	\[
	k\in K_1,
	\qquad
	x\in N_1.
	\]
	
	Then the equivariant pair
	\[
	(\varphi,f):
	(K_1,N_1)
	\longrightarrow
	(K_2,N_2)
	\]
	induces a morphism of differential graded algebras
	\begin{equation}
		(\varphi,f)^*:
		\operatorname{Car}(N_2,K_2)
		\longrightarrow
		\operatorname{Car}(N_1,K_1)
	\end{equation}
	defined by
	\begin{equation}
		\bigl((\varphi,f)^*\alpha\bigr)(\xi)
		=
		f^*
		\bigl(
		\alpha(d\varphi(\xi))
		\bigr),
		\qquad
		\xi\in\mathfrak k_1.
	\end{equation}
	
	In particular,
	\[
	d_{C,K_1}\circ(\varphi,f)^*
	=
	(\varphi,f)^*\circ d_{C,K_2}.
	\]
\end{proposition}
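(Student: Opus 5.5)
We regard elements of \(\operatorname{Car}(N_2,K_2)\) as \(K_2\)-equivariant polynomial maps \(\alpha:\mathfrak k_2\to\Omega(N_2)\); equivalently, as elements of \([\Omega(N_2)\otimes S(\mathfrak k_2^*)]^{K_2}\). Under this identification \(D_C\) acts by
\[
(d_{C,K_2}\alpha)(\zeta)=d\bigl(\alpha(\zeta)\bigr)-\iota_{X^{N_2}_\zeta}\alpha(\zeta),\qquad \zeta\in\mathfrak k_2 .
\]
This is the formula \(D_C=d\otimes 1-\sum_a\iota_a\otimes u^a\) from Section~\ref{sec:intro}, once we use the fact that on invariant elements \(u^a\) acts as the linear coordinate \(z^a\). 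The plan has three steps: show that \((\varphi,f)^*\alpha\) is again a polynomial map with values in \(\Omega(N_1)\), that it is \(K_1\)-equivariant, and that it intertwines the Cartan differentials. Compatibility with products is then immediate.

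\emph{Well-definedness.} Let \(\alpha=\sum_I \omega_I\otimes u^I\). Then \((\varphi,f)^*\alpha=\sum_I f^*\omega_I\otimes S(d\varphi^*)(u^I)\), which is polynomial in \(\xi\). Equivalently, \((\varphi,f)^*=f^*\otimes S(d\varphi^*)\), in analogy with \(\widetilde{\operatorname{Red}}_W\).

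\emph{Equivariance.} For \(k\in K_1\) the equivariance hypothesis gives \(f\circ\phi^{N_1}_k=\phi^{N_2}_{\varphi(k)}\circ f\), and differentiating \(\varphi(k\exp(t\xi)k^{-1})\) gives \(d\varphi\circ\operatorname{Ad}_k=\operatorname{Ad}_{\varphi(k)}\circ d\varphi\). Write \(\beta=(\varphi,f)^*\alpha\). Then
\[
(\phi_k^{N_1})^*\beta(\operatorname{Ad}_k\xi)=(\phi_k^{N_1})^* f^*\alpha(\operatorname{Ad}_{\varphi(k)}d\varphi(\xi))
= f^*(\phi^{N_2}_{\varphi(k)})^*\alpha(\operatorname{Ad}_{\varphi(k)}d\varphi\xi)=f^*\alpha(d\varphi\xi)=\beta(\xi),
\]
where the third equality is the \(K_2\)-invariance of \(\alpha\). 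We must take care that the sign and side conventions for invariance agree with those encoded by \(\mathcal L_a\) in the paper. Since \(K_1\) need not be connected, we use group-level invariance rather than only \(\mathcal L_\xi\beta=0\). We will verify that group invariance is the intended meaning of \([\cdot]^G\); if the paper's convention is only infinitesimal, the Lie-algebra version of the same computation suffices.

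\emph{Chain map.} The key input is the one already used in Lemma~\ref{lem:naturality-Weil-operators}: differentiating \(f(\exp(t\xi)x)=\exp(t\,d\varphi\xi)f(x)\) at \(t=0\) shows that \(X^{N_1}_\xi\) and \(X^{N_2}_{d\varphi(\xi)}\) are \(f\)-related. Hence
\[
f^*\circ\iota_{X^{N_2}_{d\varphi\xi}}=\iota_{X^{N_1}_\xi}\circ f^*.
\]
Combining this with \(f^*d=df^*\), evaluation at \(\xi\) gives
\[
(d_{C,K_1}\beta)(\xi)=d f^*\alpha(d\varphi\xi)-\iota_{X_\xi}f^*\alpha(d\varphi\xi)
=f^*\bigl(d\alpha(\zeta)-\iota_{X_\zeta}\alpha(\zeta)\bigr)\big|_{\zeta=d\varphi\xi}=\bigl((\varphi,f)^*d_{C,K_2}\alpha\bigr)(\xi).
\]
Multiplicativity follows because \(f^*\) is an algebra map and evaluation at \(d\varphi(\xi)\) is multiplicative on \(S(\mathfrak k_2^*)\). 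Degrees are preserved because \(S(d\varphi^*)\) preserves polynomial degree.

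The main obstacle is bookkeeping rather than substance. The paper defines \(D_C\) in coordinates using the generators \(u^a\), whose relation to \(z^a\) holds only on invariants. We therefore have to justify that, on \([\Omega\otimes S(\mathfrak g^*)]^G\), the coordinate formula \(d\otimes1-\sum_a\iota_a\otimes u^a\) coincides with the evaluation formula above, and that the result is basis-independent: \(\sum_a\iota_{\xi_a}\otimes x^a\) is the canonical element of \(\mathfrak g\otimes\mathfrak g^*\). Once this identification is fixed, every remaining step is a direct consequence of \(f\)-relatedness of the fundamental vector fields.
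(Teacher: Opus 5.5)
Your proposal is correct and follows the paper's proof essentially step by step. Both arguments get $K_1$-equivariance from $f\circ k=\varphi(k)\circ f$ and $d\varphi\circ\operatorname{Ad}_k=\operatorname{Ad}_{\varphi(k)}\circ d\varphi$, get the chain-map identity from the $f$-relatedness of $X^{N_1}_\xi$ and $X^{N_2}_{d\varphi(\xi)}$, and get multiplicativity from $f^*$ and restriction of polynomials. Your extra bookkeeping — matching $d\otimes 1-\sum_a\iota_a\otimes u^a$ with the evaluation formula, and group-level versus infinitesimal invariance — is care the paper leaves implicit, not a different route.
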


\begin{proof}
	An element
	$\alpha\in\operatorname{Car}(N_2,K_2)$
	may be regarded as a \(K_2\)-equivariant polynomial map
	\[
	\alpha:
	\mathfrak k_2
	\longrightarrow
	\Omega(N_2).
	\]
	
	We first show that
$
	(\varphi,f)^*\alpha
	$
	belongs to
	$
	\operatorname{Car}(N_1,K_1).
	$
	Since \(f\) is \(\varphi\)-equivariant, for every \(k\in K_1\) one has
	\[
	f\circ k
	=
	\varphi(k)\circ f.
	\]
	Moreover,
	\[
	d\varphi(\operatorname{Ad}_k\xi)
	=
	\operatorname{Ad}_{\varphi(k)}
	d\varphi(\xi).
	\]
	Together with the \(K_2\)-equivariance of \(\alpha\), these identities
	show that
	$
	(\varphi,f)^*\alpha
	$
	is \(K_1\)-equivariant. Hence
	\[
	(\varphi,f)^*\alpha
	\in
	\operatorname{Car}(N_1,K_1).
	\]
	
	We next check compatibility with the Cartan differential.  Since \(f\)
	is \(\varphi\)-equivariant, the fundamental vector fields are
	\(f\)-related:
	\[
	T_xf
	\left(
	X_\xi^{N_1}(x)
	\right)
	=
	X_{d\varphi(\xi)}^{N_2}
	\left(
	f(x)
	\right).
	\]
	Therefore
	\[
	\iota_{X_\xi^{N_1}}\circ f^*
	=
	f^*\circ
	\iota_{X_{d\varphi(\xi)}^{N_2}}.
	\]
	
	For
	$
	\xi\in\mathfrak k_1,
	$
	we have
	\begin{align*}
		\bigl(
		d_{C,K_1}(\varphi,f)^*\alpha
		\bigr)(\xi)
		&=
		d
		\left(
		f^*
		\alpha(d\varphi(\xi))
		\right)
		-
		\iota_{X_\xi^{N_1}}
		\left(
		f^*
		\alpha(d\varphi(\xi))
		\right)
		\\
		&=
		f^*
		\left(
		d\alpha(d\varphi(\xi))
		\right)
		-
		f^*
		\left(
		\iota_{X_{d\varphi(\xi)}^{N_2}}
		\alpha(d\varphi(\xi))
		\right)
		\\
		&=
		f^*
		\left(
		(d_{C,K_2}\alpha)(d\varphi(\xi))
		\right)
		\\
		&=
		\bigl(
		(\varphi,f)^*d_{C,K_2}\alpha
		\bigr)(\xi).
	\end{align*}
	Thus
	\[
	d_{C,K_1}\circ(\varphi,f)^*
	=
	(\varphi,f)^*\circ d_{C,K_2}.
	\]
	
	Finally, pullback of differential forms and restriction of polynomial
	functions both preserve products. Therefore
$
	(\varphi,f)^*
	$
	is a morphism of differential graded algebras.
\end{proof}

We now apply Proposition~\ref{prop:Cartan-functoriality} to the
equivariant pair
\[
(\jmath,i):
(H,S)
\longrightarrow
(G,G\times_HS)
\]
introduced above. An element of 
$
\operatorname{Car}(G\times_HS,G)
$
may be regarded as a \(G\)-equivariant polynomial map
\[
\alpha:
\mathfrak g
\longrightarrow
\Omega(G\times_HS).
\]

Define the Cartan reduction morphism
\begin{equation}
	\operatorname{Red}_C:
	\operatorname{Car}(G\times_HS,G)
	\longrightarrow
	\operatorname{Car}(S,H)
\end{equation}
by
\begin{equation}
	\bigl(\operatorname{Red}_C\alpha\bigr)(\eta)
	=
	i^*
	\bigl(
	\alpha(d\jmath(\eta))
	\bigr),
	\qquad
	\eta\in\mathfrak h.
\end{equation}

\begin{proposition}
	\label{prop:Cartan-reduction-chain-map}
	The Cartan reduction morphism
	\[
	\operatorname{Red}_C:
	\operatorname{Car}(G\times_HS,G)
	\longrightarrow
	\operatorname{Car}(S,H)
	\]
	is a morphism of differential graded algebras.  In particular,
	\begin{equation}
		d_{C,H}\circ\operatorname{Red}_C
		=
		\operatorname{Red}_C\circ d_{C,G}.
	\end{equation}
\end{proposition}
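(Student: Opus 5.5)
This is a direct specialization of Proposition~\ref{prop:Cartan-functoriality}, so the plan is to verify its hypotheses for the equivariant pair $(\jmath,i)$ and then identify the induced morphism with $\operatorname{Red}_C$.

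\emph{Hypotheses.} We take
\[
K_1=H,\quad N_1=S,\quad K_2=G,\quad N_2=G\times_HS,\quad \varphi=\jmath,\quad f=i.
\]
\begin{itemize}
\item $\jmath$ is a Lie group homomorphism, being the inclusion of a closed subgroup. Both $H$ and $G$ are compact, since $H$ is closed in the compact group $G$.
\item $i(s)=[e,s]$ is smooth: it is the composite of $s\mapsto(e,s)$ with the quotient map $G\times S\to G\times_HS$, which is a submersion because $H$ acts freely and properly.
\item $i$ is $\jmath$-equivariant. This is exactly the identity $i(h\cdot s)=h\cdot i(s)$ established at the start of Section~\ref{sec:reduction-induced-space}.
\end{itemize}

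\emph{Identification.} Proposition~\ref{prop:Cartan-functoriality} then yields a morphism of differential graded algebras $(\jmath,i)^*:\operatorname{Car}(G\times_HS,G)\to\operatorname{Car}(S,H)$. Its defining formula, $\bigl((\jmath,i)^*\alpha\bigr)(\eta)=i^*\bigl(\alpha(d\jmath(\eta))\bigr)$, coincides verbatim with the definition of $\operatorname{Red}_C$. Hence $\operatorname{Red}_C=(\jmath,i)^*$, and the relation $d_{C,H}\circ\operatorname{Red}_C=\operatorname{Red}_C\circ d_{C,G}$ is the final assertion of that proposition.

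\emph{Optional consistency check.} One could also check that $\operatorname{Red}_C$ agrees with the Weil reduction $\operatorname{Red}_W$ of Proposition~\ref{prop:Weil-reduction-chain-map} under the Mathai--Quillen isomorphisms $\Phi_G$ and $\Phi_H$. This would give an alternative proof via $D_C=\Phi D\Phi^{-1}$.

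\begin{itemize}
\item The only point requiring care is that $\gamma_G=\sum_a\iota_a\otimes\theta^a$ restricts correctly. Choose a basis of $\mathfrak g$ whose first $t$ elements span $\mathfrak h$. Then $W(\jmath)$ kills $\theta^a$ for $a>t$, so $\widetilde{\operatorname{Red}}_W\circ\gamma_G=\gamma_H\circ\widetilde{\operatorname{Red}}_W$, using that $i^*$ intertwines $\iota_{\jmath(\eta)}$ with $\iota_\eta$ by Lemma~\ref{lem:naturality-Weil-operators}.
\item Exponentiating gives $\widetilde{\operatorname{Red}}_W\circ\Phi_G=\Phi_H\circ\widetilde{\operatorname{Red}}_W$.
\item On $\Omega\otimes S(\mathfrak g^*)$ the map $\widetilde{\operatorname{Red}}_W$ is precisely $\operatorname{Red}_C$.
\end{itemize}

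I expect no real obstacle. The only subtlety is the bookkeeping in this optional check, namely confirming that $W(\jmath)$ sends $u^a$ to $u^a$ for $a\le t$ and to $0$ otherwise. This holds because $c^a_{jk}=0$ for $j,k\le t<a$.
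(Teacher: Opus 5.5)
Your proposal is correct and follows the paper's proof exactly: the paper likewise notes that $(\jmath,i)$ is an equivariant pair, invokes Proposition~\ref{prop:Cartan-functoriality}, and observes that $(\jmath,i)^*=\operatorname{Red}_C$ by definition. Your optional Mathai--Quillen check is sound but not needed here; it is essentially the argument the paper gives separately in Proposition~\ref{prop:compatibility-MQ-reduction}.
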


\begin{proof}
	The inclusion
	\[
	\jmath:H\hookrightarrow G
	\]
	and the map
	\[
	i:S\longrightarrow G\times_HS,
	\qquad
	i(s)=[e,s],
	\]
	form an equivariant pair
	\[
	(\jmath,i):
	(H,S)
	\longrightarrow
	(G,G\times_HS).
	\]
	Therefore Proposition~\ref{prop:Cartan-functoriality} gives a morphism
	of differential graded algebras
	\[
	(\jmath,i)^*:
	\operatorname{Car}(G\times_HS,G)
	\longrightarrow
	\operatorname{Car}(S,H).
	\]
	By definition,
	\[
	(\jmath,i)^*
	=
	\operatorname{Red}_C.
	\]
	Hence
	$
	\operatorname{Red}_C
	$
	is a morphism of differential graded algebras.
\end{proof}

Denote the  Mathai--Quillen isomorphisms by
\begin{equation*}
	\Phi_G:
	\operatorname{Weil}(G\times_HS,G)
	\xrightarrow{\cong}
	\operatorname{Car}(G\times_HS,G)
\end{equation*}
and
\begin{equation*}
	\Phi_H:
	\operatorname{Weil}(S,H)
	\xrightarrow{\cong}
	\operatorname{Car}(S,H).
\end{equation*}

\begin{proposition}
	\label{prop:compatibility-MQ-reduction}
	The Weil and Cartan reduction morphisms are compatible with the
	Mathai--Quillen isomorphisms. More precisely, the following diagram
	commutes:
	\begin{equation*}
		\begin{CD}
			\operatorname{Weil}(G\times_H S,G)
			@>{\Phi_G}>>
			\operatorname{Car}(G\times_H S,G)
			\\
			@V{\operatorname{Red}_W}VV
			@VV{\operatorname{Red}_C}V
			\\
			\operatorname{Weil}(S,H)
			@>{\Phi_H}>>
			\operatorname{Car}(S,H).
		\end{CD}
	\end{equation*}
	Equivalently,
	\begin{equation}
		\Phi_H\circ\operatorname{Red}_W
		=
		\operatorname{Red}_C\circ\Phi_G.
	\end{equation}
	In particular,
	\begin{equation}
		\operatorname{Red}_W
		=
		\Phi_H^{-1}
		\circ\operatorname{Red}_C
		\circ\Phi_G.
	\end{equation}
\end{proposition}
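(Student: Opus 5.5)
The plan is to prove the identity at the level of the ambient algebras, before restricting to basic elements. Concretely, I will show that
\[
\widetilde{\operatorname{Red}}_W\circ e^{\gamma_G}=e^{\gamma_H}\circ\widetilde{\operatorname{Red}}_W
\]
on all of \(\Omega(G\times_HS)\otimes W(\mathfrak g^*)\). Then I will check that, on the image of \(\Phi_G\) (the \(\theta\)-free part written in the \(u\)-variables), \(\widetilde{\operatorname{Red}}_W\) agrees with \(\operatorname{Red}_C\). Throughout I use the adapted basis from Section~1: \(\xi_1,\dots,\xi_t\) is a basis of \(\mathfrak h\), extended to a basis \(\xi_1,\dots,\xi_m\) of \(\mathfrak g\). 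With this basis, \(W(\jmath)(\theta^a_G)=\theta^a_H\) and \(W(\jmath)(z^a_G)=z^a_H\) for \(a\le t\), while both vanish for \(a>t\). Moreover \(c^k_{ij}=0\) whenever \(i,j\le t<k\), and for \(i,j,k\le t\) the numbers \(c^k_{ij}\) are the structure constants of \(\mathfrak h\).

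\emph{Step 1: \(\widetilde{\operatorname{Red}}_W\) intertwines \(\gamma\).} For \(\omega\otimes\sigma\), expand
\[
\widetilde{\operatorname{Red}}_W\gamma_G(\omega\otimes\sigma)=\sum_{a=1}^m i^*\iota_{X_{\xi_a}}\omega\otimes W(\jmath)(\theta^a)\,W(\jmath)(\sigma).
\]
The terms with \(a>t\) vanish because \(W(\jmath)(\theta^a)=0\). For \(a\le t\), the \(i\)-relatedness of fundamental vector fields used in Lemma~\ref{lem:naturality-Weil-operators} gives \(i^*\iota_{X_{\xi_a}}=\iota_{X_{\xi_a}}i^*\), with the right-hand contraction taken on \(S\). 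This yields \(\gamma_H\widetilde{\operatorname{Red}}_W(\omega\otimes\sigma)\). No sign issue arises, since \(\gamma\) places \(\theta^a\) on the left of \(\sigma\) in both algebras and \(W(\jmath)\) is multiplicative. Both \(\gamma_G\) and \(\gamma_H\) are nilpotent, because they raise the \(\theta\)-degree, which is bounded. Hence \(e^{\gamma}\) is a finite sum, and induction on powers gives \(\widetilde{\operatorname{Red}}_W\circ\Phi_G=\Phi_H\circ\widetilde{\operatorname{Red}}_W\). Restricting to basic elements, which \(\widetilde{\operatorname{Red}}_W\) preserves by Proposition~\ref{prop:Weil-reduction-chain-map}, gives
\[
\Phi_H\circ\operatorname{Red}_W=\widetilde{\operatorname{Red}}_W\circ\Phi_G
\]
on \(\operatorname{Weil}(G\times_HS,G)\).

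\emph{Step 2: identify \(\widetilde{\operatorname{Red}}_W\) on the Cartan side with \(\operatorname{Red}_C\).} First I compute \(W(\jmath)\) on the variables \(u^a=z^a+\tfrac12c^a_{jk}\theta^j\theta^k\). Applying \(W(\jmath)\) kills every \(\theta^j\) with \(j>t\), so
\[
W(\jmath)(u^a_G)=W(\jmath)(z^a_G)+\tfrac12\sum_{j,k\le t}c^a_{jk}\theta^j_H\theta^k_H .
\]
For \(a>t\) this is \(0\): the first term vanishes, and the second vanishes since \(c^a_{jk}=0\) for \(j,k\le t<a\). For \(a\le t\) it is exactly \(u^a_H\), because the relevant \(c^a_{jk}\) are the structure constants of \(\mathfrak h\). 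Hence, on \(\Omega(G\times_HS)\otimes S(\mathfrak g^*)\) written in the \(u\)-variables, \(\widetilde{\operatorname{Red}}_W\) equals \(i^*\) tensored with the restriction \(S(\mathfrak g^*)\to S(\mathfrak h^*)\) induced by \(\jmath^*\). Now regard an element of \(\operatorname{Car}(G\times_HS,G)\) as a polynomial map \(\mathfrak g\to\Omega(G\times_HS)\). Restricting polynomials along \(d\jmath\) and then pulling back by \(i\) is precisely the formula \(\bigl(\operatorname{Red}_C\alpha\bigr)(\eta)=i^*\bigl(\alpha(d\jmath(\eta))\bigr)\). Since \(\Phi_G\) maps basic elements into \(\operatorname{Car}(G\times_HS,G)\), this gives \(\widetilde{\operatorname{Red}}_W\circ\Phi_G=\operatorname{Red}_C\circ\Phi_G\), and combining with Step 1 proves the commutativity of the diagram. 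The final formula \(\operatorname{Red}_W=\Phi_H^{-1}\circ\operatorname{Red}_C\circ\Phi_G\) then follows because \(\Phi_H\) is an isomorphism.

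I expect the main obstacle to be Step 2, specifically the bookkeeping needed to ensure that the identification of \(W(\mathfrak g^*)_{\mathrm{hor}}\) with \(S(\mathfrak g^*)\) uses the \(u\)-generators consistently on both sides. The cancellation of the quadratic \(\theta\)-terms relies exactly on \(\mathfrak h\) being a subalgebra (\(c^k_{ij}=0\) for \(i,j\le t<k\)). If the identification were made with the \(z\)-generators instead, the argument would have to be redone. I would therefore fix the \(u\)-variables from the outset and verify \(W(\jmath)(u^a_G)\) directly, as above.
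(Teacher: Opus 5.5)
Your proposal is correct and follows the paper's proof: you intertwine $\gamma_G$ and $\gamma_H$ using $i$-relatedness and the vanishing of $W(\jmath)(\theta^a_G)$ for $a>t$, exponentiate, and then identify $i^*\otimes W(\jmath)$ on the $u$-polynomial (Cartan) part with $\operatorname{Red}_C$. Your explicit check that $W(\jmath)(u^a_G)$ equals $u^a_H$ or $0$, using that $\mathfrak h$ is a subalgebra, spells out what the paper builds directly into its definition $W(\jmath)(u_\lambda)=u_{\jmath^*\lambda}$.
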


\begin{proof}
	Let
	$
	m=\dim G$
	and
	$r=\dim H.
	$
	Choose a basis
	$
	\{\xi_1,\ldots,\xi_r\}
	$
	of \(\mathfrak h\), and extend it to a basis
	$
	\{\xi_1,\ldots,\xi_r,\xi_{r+1},\ldots,\xi_m\}
	$
	of \(\mathfrak g\). Let
	$
	\{x^1,\ldots,x^m\}
	$
	be the dual basis of \(\mathfrak g^*\), and let
	$
	\{\bar x^1,\ldots,\bar x^r\}
	$
	be the dual basis of \(\mathfrak h^*\). Then
	\[
	\jmath^*x^a
	=
	\begin{cases}
		\bar x^a, & 1\leq a\leq r,\\
		0, & r<a\leq m.
	\end{cases}
	\]
	Accordingly, for the degree-one generators of the Weil algebras,
	\[
	W(\jmath)(\theta_G^a)
	=
	\begin{cases}
		\theta_H^a, & 1\leq a\leq r,\\
		0, & r<a\leq m.
	\end{cases}
	\]
	
	Recall that the Mathai--Quillen transformations are
	\[
	\Phi_G=e^{\gamma_G},
	\qquad
	\Phi_H=e^{\gamma_H},
	\]
	where
	\[
	\gamma_G(\omega\otimes\sigma)
	=
	\sum_{a=1}^{m}
	\iota_{X_{\xi_a}^{G\times_HS}}\omega
	\otimes
	\theta_G^a\sigma
	\]
	and
	\[
	\gamma_H(\beta\otimes\tau)
	=
	\sum_{a=1}^{r}
	\iota_{X_{\xi_a}^{S}}\beta
	\otimes
	\theta_H^a\tau.
	\]
	
	We first show that
	\begin{equation}
		\widetilde{\operatorname{Red}}_W
		\circ\gamma_G
		=
		\gamma_H
		\circ\widetilde{\operatorname{Red}}_W.
		\label{eq:gamma-reduction-compatible}
	\end{equation}
	Indeed, for
	\(\omega\otimes\sigma
	\in
	\Omega(G\times_HS)\otimes W(\mathfrak g^*)\),
	we have
	\begin{align*}
		&
		\widetilde{\operatorname{Red}}_W
		\bigl(
		\gamma_G(\omega\otimes\sigma)
		\bigr)
		\\
		=&
		\sum_{a=1}^{m}
		i^*
		\left(
		\iota_{X_{\xi_a}^{G\times_HS}}\omega
		\right)
		\otimes
		W(\jmath)(\theta_G^a)
		W(\jmath)(\sigma)
		\\
		=&
		\sum_{a=1}^{r}
		i^*
		\left(
		\iota_{X_{\xi_a}^{G\times_HS}}\omega
		\right)
		\otimes
		\theta_H^a
		W(\jmath)(\sigma).
	\end{align*}
	Since \(i:S\to G\times_HS\) is \(H\)-equivariant, for
	\(1\leq a\leq r\) the corresponding fundamental vector fields are
	\(i\)-related. Hence
	\[
	i^*
	\circ
	\iota_{X_{\xi_a}^{G\times_HS}}
	=
	\iota_{X_{\xi_a}^{S}}
	\circ i^*.
	\]
	Therefore
	\begin{align*}
		&
		\widetilde{\operatorname{Red}}_W
		\bigl(
		\gamma_G(\omega\otimes\sigma)
		\bigr)
		\\
		=&
		\sum_{a=1}^{r}
		\iota_{X_{\xi_a}^{S}}i^*\omega
		\otimes
		\theta_H^a W(\jmath)(\sigma)
		\\
		=&
		\gamma_H
		\bigl(
		i^*\omega\otimes W(\jmath)(\sigma)
		\bigr)
		\\
		=&
		\gamma_H
		\bigl(
		\widetilde{\operatorname{Red}}_W
		(\omega\otimes\sigma)
		\bigr).
	\end{align*}
	This proves \eqref{eq:gamma-reduction-compatible}.
	
	It follows by induction that, for every \(k\geq0\),
	\[
	\widetilde{\operatorname{Red}}_W
	\circ\gamma_G^k
	=
	\gamma_H^k
	\circ\widetilde{\operatorname{Red}}_W.
	\]
	Hence
	\begin{align*}
		\widetilde{\operatorname{Red}}_W\circ\Phi_G
		&=
		\widetilde{\operatorname{Red}}_W
		\circ e^{\gamma_G}
		\\
		&=
		e^{\gamma_H}
		\circ\widetilde{\operatorname{Red}}_W
		\\
		&=
		\Phi_H
		\circ\widetilde{\operatorname{Red}}_W.
	\end{align*}
	
	On the Weil basic subcomplex,
	\(\widetilde{\operatorname{Red}}_W\) restricts to
	\(\operatorname{Red}_W\). On the Cartan side,
	\(W(\jmath)\) restricts on the symmetric algebra to the map
	\[
	S(\jmath^*):
	S(\mathfrak g^*)
	\longrightarrow
	S(\mathfrak h^*),
	\]
	and therefore the restriction of
	\(i^*\otimes W(\jmath)\) is precisely the Cartan reduction morphism
	\(\operatorname{Red}_C\). Consequently,
	\[
	\operatorname{Red}_C\circ\Phi_G
	=
	\Phi_H\circ\operatorname{Red}_W,
	\]
	which proves the proposition.
\end{proof}

\subsection{The chain quasi-isomorphism theorem}
\label{subsec:reduction-quasi-isomorphism}

Now we are ready to prove that the reduction maps induces isomorphism on cohomology.

\begin{theorem}[Equivariant induction]
	\label{thm:induction-quasi-isomorphism}
	Let \(G\) be a compact Lie group, let \(H\subset G\) be a closed
	subgroup, and let \(S\) be a smooth \(H\)-manifold. Then the reduction
	morphisms
	\begin{align}
		\operatorname{Red}_W:
		\operatorname{Weil}(G\times_HS,G)
		&\longrightarrow
		\operatorname{Weil}(S,H),
		\\
		\operatorname{Red}_C:
		\operatorname{Car}(G\times_HS,G)
		&\longrightarrow
		\operatorname{Car}(S,H)
	\end{align}
	are quasi-isomorphisms. Consequently,
	\begin{equation*}
		H_G^*(G\times_HS;\mathbb R)
		\cong
		H_H^*(S;\mathbb R).
	\end{equation*}
\end{theorem}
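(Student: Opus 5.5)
By Proposition~\ref{prop:compatibility-MQ-reduction}, $\operatorname{Red}_W=\Phi_H^{-1}\circ\operatorname{Red}_C\circ\Phi_G$, and $\Phi_G,\Phi_H$ are isomorphisms of complexes. It therefore suffices to prove that $\operatorname{Red}_C$ is a quasi-isomorphism. The statement for $\operatorname{Red}_W$ then follows formally, and the final isomorphism $H_G^*(G\times_HS)\cong H_H^*(S)$ follows from Theorem~\ref{thma4}.

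For the Cartan model I would show that $\operatorname{Red}_C$ is an \emph{isomorphism} of complexes, not merely a quasi-isomorphism. This is the infinitesimal analogue of the homeomorphism $f$ of Section~\ref{sec1.2}. Concretely, I will construct an inverse "induction" map
\[
\operatorname{Ind}:\operatorname{Car}(S,H)\to\operatorname{Car}(G\times_HS,G).
\]

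\textbf{Step 1 (construction).} Since $G$ is compact, fix an $\operatorname{Ad}(H)$-invariant complement $\mathfrak m$ with $\mathfrak g=\mathfrak h\oplus\mathfrak m$, and let $\mathrm{pr}_{\mathfrak h}:\mathfrak g\to\mathfrak h$ be the $H$-equivariant projection. The projection $q:G\times S\to G\times_HS$ is a principal $H$-bundle for the free right action~\eqref{eq:H-action-product}. Using the left-invariant connection $\mathrm{pr}_{\mathfrak h}(g^{-1}dg)$ on $G\to G/H$, pulled back to $G\times S$, I would write an explicit $G$-equivariant polynomial map $\mathfrak g\to\Omega(G\times_HS)$ attached to $\beta\in\operatorname{Car}(S,H)$. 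Its value at $\xi$ on the fibre over $[g,s]$ is obtained from $\beta\bigl(\mathrm{pr}_{\mathfrak h}(\operatorname{Ad}_{g^{-1}}\xi)\bigr)$ by a Cartan-type horizontal projection/curvature correction. This is the standard Cartan-model description of the isomorphism $\operatorname{Car}(G\times_HS,G)\cong\operatorname{Car}(S,H)$ (the "equivariant induction" in Guillemin--Sternberg).

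\textbf{Step 2 (inverse).} I expect the construction of Step~1 to be the hard part: it has to be made concrete, and one must check that it is well defined and that $D_C$ commutes with it. If that computation becomes unwieldy, I would instead argue on cohomology only, which suffices. The route is as follows:
\begin{itemize}
\item $\operatorname{Red}_C$ is natural in $S$ and compatible with restriction to $H$-invariant open subsets $V\subset S$, since $G\times_HV\subset G\times_HS$ is open.
\item Both $V\mapsto H(\operatorname{Car}(G\times_HV,G))$ and $V\mapsto H(\operatorname{Car}(V,H))$ satisfy Mayer--Vietoris for $H$-invariant covers; compactness of $H$ supplies invariant partitions of unity.
\item A Bredon/Mayer--Vietoris induction over a good cover by invariant tubes then reduces the claim to orbit type tubes $V=H\times_KD$, with $D$ a $K$-representation disc.
\item These equivariantly retract to $H/K$, so the claim reduces further to $S=H/K$, that is, to checking $\operatorname{Car}(G/K,G)\to\operatorname{Car}(H/K,H)$. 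Both sides have cohomology $S(\mathfrak k^*)^K$ by the free-action Weil homomorphism argument of the preceding subsection (or by the standard computation $H_G(G/K)=H_K(\mathrm{pt})$), and the map is compatible with these identifications because each is restriction to the basepoint $eK$.
\end{itemize}

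\textbf{Main obstacle.} The hard step is Step~2, namely either the explicit inverse with its compatibility with $D_C$, or the Mayer--Vietoris/retraction induction. In the latter, the delicate points are the homotopy invariance of the Cartan model under equivariant deformation retractions, and the passage from finite to arbitrary covers, for which I would use a Milnor $\lim^1$ argument.
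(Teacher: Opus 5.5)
There is a genuine gap. Your reduction of $\operatorname{Red}_W$ to $\operatorname{Red}_C$ via Proposition~\ref{prop:compatibility-MQ-reduction} matches the paper's final step, but neither of your routes for $\operatorname{Red}_C$ closes.

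\textbf{Step 1 is false as stated.} $\operatorname{Red}_C$ is not an isomorphism of complexes. Take $G=S^1$, $H=\{e\}$, $S=\mathrm{pt}$, so $G\times_HS=S^1$ with the rotation action. Then $\operatorname{Car}(S^1,S^1)$ has basis $u^k$ and $d\theta\,u^k$ for $k\geq 0$, with $D_C(d\theta\,u^k)=-u^{k+1}$. On the other side $\operatorname{Car}(\mathrm{pt},\{e\})=\mathbb R$, and $\operatorname{Red}_C$ kills every $d\theta\,u^k$ and every $u^k$ with $k\geq 1$. So any induction map $\operatorname{Ind}$ can at best be a chain-homotopy inverse. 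The homotopy $\operatorname{Ind}\circ\operatorname{Red}_C\simeq\operatorname{id}$ is exactly the nontrivial content, and you do not supply it.

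\textbf{The Mayer--Vietoris fallback only relocates the difficulty.} After reducing to $S=H/K$, you need $\operatorname{Car}(G/K,G)\to\operatorname{Car}(H/K,H)$ to be a quasi-isomorphism. You justify this by identifying both cohomologies with $S(\mathfrak k^*)^K$ ``by restriction to the basepoint''. But those identifications are precisely the $S=\mathrm{pt}$ cases of the theorem you are proving, for the pairs $(G,K)$ and $(H,K)$. By functoriality of equivariant pairs, the composite $\operatorname{Car}(G/K,G)\to\operatorname{Car}(H/K,H)\to\operatorname{Car}(\mathrm{pt},K)$ is $\operatorname{Red}_C$ for $(G,K,\mathrm{pt})$. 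Two-out-of-three would therefore finish the argument, but only once the point case is known independently. The free-action Weil-homomorphism argument does not supply it, since $G$ does not act freely on $G/K$ unless $K$ is trivial.

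Some genuine input is needed at this point. One option is the commuting free actions of $G\times K$ on $G$, combined with an equivariant Cartan theorem for free actions. The other is what the paper does: use the homeomorphism $\bar i:E_G\times_HS\to E_G\times_G(G\times_HS)$, and naturality of the equivariant de Rham isomorphism under the equivariant pair $(\jmath,i)$ with $E\jmath=\operatorname{id}_{E_G}$. This gives $\mathrm{DR}_{H,S}\circ H^*(\operatorname{Red}_C)=\bar i^{\,*}\circ\mathrm{DR}_{G,M}$ for all $S$ at once.

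The paper's route also avoids your secondary technical burdens:
\begin{itemize}
\item invariant covers whose finite intersections are again tubes, which is not a standard fact;
\item smooth equivariant homotopy invariance;
\item a $\lim^1$ argument for infinite covers.
\end{itemize}
Your remark that $\operatorname{Red}_C$ is the ``infinitesimal analogue of $f$'' is the right instinct. It has to be realized on cohomology through the de Rham isomorphism, not as a chain isomorphism.
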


\begin{proof}
Firstly, let $E_G$ be a contractible free right $G$-space. By restricting the right action along the inclusion \begin{equation*} \jmath:H\hookrightarrow G, \end{equation*} the same space $E_G$ becomes a contractible free right $H$-space and hence may also be used as a model for $E_H$. Recall that \begin{equation*} M=G\times_HS, \end{equation*} where $H$ acts freely from the right on $G\times S$ by \begin{equation*} (g,s)\cdot h = (gh,h^{-1}\cdot s). \end{equation*} The Borel constructions are \begin{equation*} E_G\times_GM \qquad\text{and}\qquad E_G\times_HS, \end{equation*} where the quotient relations are \begin{equation*} [p,m] = [pg,g^{-1}\cdot m], \qquad g\in G, \end{equation*} and \begin{equation*} [p,s] = [ph,h^{-1}\cdot s], \qquad h\in H, \end{equation*} respectively. The equivariant pair \begin{equation*} (\jmath,i):(H,S)\longrightarrow(G,M), \qquad i(s)=[e,s], \end{equation*} induces a continuous map \begin{equation*} \begin{aligned} \bar i: E_G\times_HS &\longrightarrow E_G\times_GM,\\ [p,s] &\longmapsto [p,[e,s]]. \end{aligned} \end{equation*} This map is well defined. Indeed, for $h\in H$, \begin{align*} \bar i([ph,h^{-1}s]) &= [ph,[e,h^{-1}s]]\\ &= [ph,[h^{-1},s]]\\ &= [p,[e,s]]. \end{align*} Conversely, define \begin{equation*} \begin{aligned} \Psi: E_G\times_GM &\longrightarrow E_G\times_HS,\\ [p,[g,s]] &\longmapsto [pg,s]. \end{aligned} \end{equation*} The map $\Psi$ is well defined. First, if \begin{equation*} [g,s] = [gh,h^{-1}s], \qquad h\in H, \end{equation*} then \begin{equation*} [pgh,h^{-1}s] = [pg,s] \end{equation*} in $E_G\times_HS$. Second, for $k\in G$, \begin{align*} \Psi([pk,k^{-1}\cdot[g,s]]) &= \Psi([pk,[k^{-1}g,s]])\\ &= [pkk^{-1}g,s]\\ &= [pg,s]. \end{align*} Thus $\Psi$ is well defined. It is immediate that \begin{equation*} \Psi\circ\bar i = \operatorname{id}_{E_G\times_HS} \end{equation*} and \begin{equation*} \bar i\circ\Psi = \operatorname{id}_{E_G\times_GM}. \end{equation*} Therefore $\bar i$ is a homeomorphism. In particular, \begin{equation*} \bar i^{\,*}: H^*(E_G\times_GM;\mathbb R) \xrightarrow{\cong} H^*(E_G\times_HS;\mathbb R) \end{equation*} is an isomorphism.
	
Secondly, we will show that the following diagram commutes.
	\begin{equation*}
	\begin{CD}
		H^*(\operatorname{Car}(G\times_H S, G))
		@>{H^*(Red_\mathcal{C})}>>
		H^*(\operatorname{Car}(S,H))
		\\
		@V{DR_G}VV{}
		@VV{DR_H}V{}
		\\
		H^*(E_G\times_G(G\times_H S))
		@>{\bar{i}^*}>>
		H^*(E_G\times_H S)
	\end{CD}
\end{equation*}

Let \(K_1\) and \(K_2\) be compact Lie groups, let \(N_1\) be a smooth
\(K_1\)-manifold, and let \(N_2\) be a smooth \(K_2\)-manifold.
Suppose that
\[
\varphi:K_1\longrightarrow K_2
\]
is a Lie group homomorphism and
\[
f:N_1\longrightarrow N_2
\]
is a smooth \(\varphi\)-equivariant map.

By Proposition~\ref{prop:Cartan-functoriality}, the equivariant pair
\[
(\varphi,f):
(K_1,N_1)
\longrightarrow
(K_2,N_2)
\]
induces a morphism of Cartan complexes
\[
(\varphi,f)^*:
\operatorname{Car}(N_2,K_2)
\longrightarrow
\operatorname{Car}(N_1,K_1)
\]
given by
\[
\bigl((\varphi,f)^*\alpha\bigr)(\xi)
=
f^*
\bigl(
\alpha(d\varphi(\xi))
\bigr),
\qquad
\xi\in\mathfrak k_1.
\]

Choose models $E_{K_1}$ and $E_{K_2}$ as contractible free right $K_1$- and $K_2$-spaces, respectively. Let \begin{equation*} E\varphi:E_{K_1}\longrightarrow E_{K_2} \end{equation*} be a map equivariant with respect to the right actions, so that \begin{equation*} E\varphi(pk) = E\varphi(p)\varphi(k), \qquad p\in E_{K_1},\quad k\in K_1. \end{equation*} For the left $K_i$-manifolds $N_i$, the corresponding Borel constructions are defined by \begin{equation*} E_{K_i}\times_{K_i}N_i = (E_{K_i}\times N_i)/K_i, \end{equation*} where \begin{equation*} (p,x)\cdot k = (pk,k^{-1}\cdot x). \end{equation*} Since $f:N_1\rightarrow N_2$ is $\varphi$-equivariant, the pair $(E\varphi,f)$ induces a map \begin{equation*} \bar f: E_{K_1}\times_{K_1}N_1 \longrightarrow E_{K_2}\times_{K_2}N_2 \end{equation*} defined by \begin{equation*} \bar f([p,x]) = [E\varphi(p),f(x)]. \end{equation*} Indeed, if \begin{equation*} [p,x] = [pk,k^{-1}x], \end{equation*} then \begin{align*} \bar f([pk,k^{-1}x]) &= [E\varphi(pk),f(k^{-1}x)]\\ &= [E\varphi(p)\varphi(k), \varphi(k)^{-1}f(x)]\\ &= [E\varphi(p),f(x)]. \end{align*} Thus $\bar f$ is well defined.

	Let
	\[
	\mathcal{DR}_{K,N}:
	\operatorname{Car}(N,K)
	\longrightarrow
	\Omega(EK\times_KN)
	\]
	denote a chain-level equivariant de Rham morphism constructed from a
	universal connection. The functoriality of the Cartan model and the naturality of the equivariant de Rham isomorphism (see, for example, 
	\cite{Gui1} Chapter 4) gives a chain homotopy
	\begin{equation}
		\bar f^*\circ\mathcal{DR}_{K_2,N_2}
		\simeq
		\mathcal{DR}_{K_1,N_1}\circ(\varphi,f)^*.
		\label{eq:equivariant-de-Rham-chain-naturality}
	\end{equation}
The Cartan equivariant de Rham morphism is natural up to chain
homotopy with respect to equivariant maps and homomorphisms of
compact Lie groups. Therefore
\[
\bar f^*\circ\mathcal{DR}_{K_2,N_2}
\simeq
\mathcal{DR}_{K_1,N_1}\circ(\varphi,f)^* .
\]
	
	Passing to cohomology, the chain homotopy in
	\eqref{eq:equivariant-de-Rham-chain-naturality} gives the identity
	\begin{equation}
		\operatorname{DR}_{K_1,N_1}
		\circ
		H^*((\varphi,f)^*)
		=
		\bar f^{\,*}
		\circ
		\operatorname{DR}_{K_2,N_2},
		\label{eq:equivariant-de-Rham-cohomology-naturality}
	\end{equation}
	where
	\[
	\operatorname{DR}_{K,N}:
	H^*(\operatorname{Car}(N,K))
	\xrightarrow{\cong}
	H^*(EK\times_KN;\mathbb R)
	\]
	is the equivariant de Rham isomorphism.
	
	We now apply this to
	\[
	K_1=H,
	\qquad
	K_2=G,
	\qquad
	N_1=S,
	\qquad
	N_2=M=G\times_HS,
	\]
	with
	\[
	\varphi=\jmath:H\hookrightarrow G
	\]
	and
	\[
	f=i:S\longrightarrow M,
	\qquad
	i(s)=[e,s].
	\]
	The corresponding Cartan pullback is
	\[
	\bigl((\jmath,i)^*\alpha\bigr)(\eta)
	=
	i^*\bigl(\alpha(d\jmath(\eta))\bigr),
	\qquad
	\eta\in\mathfrak h.
	\]
	Since \(d\jmath:\mathfrak h\hookrightarrow\mathfrak g\) is the
	inclusion, this is exactly the Cartan reduction morphism:
	\begin{equation*}
		(\jmath,i)^*
		=
		\operatorname{Red}_C.
	\end{equation*}
	
	We use \(E_G\), with its action restricted to \(H\), as a model for
	\(E_H\). The map between universal spaces associated with
	\(\jmath:H\hookrightarrow G\) can therefore be chosen to be
	\[
	E\jmath=\operatorname{id}_{E_G}.
	\]
	The induced map is then
	\[
	\bar i:
	E_G\times_HS
	\longrightarrow
	E_G\times_GM,
	\qquad
	\bar i([p,s])=[p,[e,s]].
	\]
	Substituting these data into
	\eqref{eq:equivariant-de-Rham-cohomology-naturality}, we obtain
	\begin{equation}
		\operatorname{DR}_{H,S}
		\circ
		H^*(\operatorname{Red}_C)
		=
		\bar i^{\,*}
		\circ
		\operatorname{DR}_{G,M}.
		\label{eq:equivariant-de-Rham-reduction-naturality}
	\end{equation}
	
	Equivalently, for every class
	\[
	[\alpha]\in
	H^*(\operatorname{Car}(M,G)),
	\]
	one has
	\begin{equation*}
		\operatorname{DR}_{H,S}
		\bigl(
		[\operatorname{Red}_C(\alpha)]
		\bigr)
		=
		\bar i^{\,*}
		\left(
		\operatorname{DR}_{G,M}([\alpha])
		\right).
	\end{equation*}
	
	Since \(\bar i\) is a homeomorphism, the homomorphism
	\[
	\bar i^{\,*}:
	H^*(E_G\times_GM;\mathbb R)
	\longrightarrow
	H^*(E_G\times_HS;\mathbb R)
	\]
	is an isomorphism. Since the equivariant de Rham maps
	\(\operatorname{DR}_{G,M}\) and \(\operatorname{DR}_{H,S}\) are also
	isomorphisms, equation
	\eqref{eq:equivariant-de-Rham-reduction-naturality} gives
	\begin{equation*}
		H^*(\operatorname{Red}_C)
		=
		\operatorname{DR}_{H,S}^{-1}
		\circ
		\bar i^{\,*}
		\circ
		\operatorname{DR}_{G,M}.
	\end{equation*}
	Therefore \(H^*(\operatorname{Red}_C)\) is an isomorphism, and hence
	\(\operatorname{Red}_C\) is a quasi-isomorphism.

	At last, we apply Mathai--Quillen isomorphisms to conclude that the reduction map between the Weil model is also an isomorphism. Let
	\begin{equation*}
		\Phi_G:
		\operatorname{Weil}(M,G)
		\xrightarrow{\cong}
		\operatorname{Car}(M,G)
	\end{equation*}
	and
	\begin{equation*}
		\Phi_H:
		\operatorname{Weil}(S,H)
		\xrightarrow{\cong}
		\operatorname{Car}(S,H)
	\end{equation*}
	be the Mathai--Quillen isomorphisms.
	
	By the the Mathai--Quillen transformation with respect
	to the equivariant pair \((\jmath,i)\), the following diagram commutes:
	\begin{equation*}
		\begin{CD}
			\operatorname{Weil}(M,G)
			@>{\Phi_G}>>
			\operatorname{Car}(M,G)
			\\
			@V{\operatorname{Red}_W}VV
			@VV{\operatorname{Red}_C}V
			\\
			\operatorname{Weil}(S,H)
			@>{\Phi_H}>>
			\operatorname{Car}(S,H).
		\end{CD}
	\end{equation*}
	Thus
	\begin{equation*}
		\Phi_H\circ\operatorname{Red}_W
		=
		\operatorname{Red}_C\circ\Phi_G.
	\end{equation*}
	
	Passing to cohomology gives
	\begin{equation*}
		H^*(\operatorname{Red}_W)
		=
		H^*(\Phi_H)^{-1}
		\circ
		H^*(\operatorname{Red}_C)
		\circ
		H^*(\Phi_G).
	\end{equation*}
	The maps \(H^*(\Phi_G)\) and \(H^*(\Phi_H)\) are isomorphisms, and
	\(H^*(\operatorname{Red}_C)\) is an isomorphism.
	Consequently, $H^*(\operatorname{Red}_W)$ is also an isomorphism. Hence \(\operatorname{Red}_W\) is a
	quasi-isomorphism. This implies
	\[
	H_G^*(G\times_HS;\mathbb R)
	\cong
	H_H^*(S;\mathbb R).
	\]
\end{proof}

We next consider examples of the reduction morphism.

\begin{example}[Torus action]
	Let
	\[
	G=T^2=S^1\times S^1
	\]
	and let
	\[
	H=S^1\times\{1\}\subset T^2 .
	\]
	Then
	\[
	G/H\simeq S^1 .
	\]
	
	Let
	\[
	\mathfrak g
	=
	\mathbb{R}e_1\oplus\mathbb{R}e_2
	\]
	and assume that
	\[
	\mathfrak h=\mathbb{R}e_1 .
	\]
	Let
	\[
	\{x_1,x_2\}
	\]
	be the dual basis of \(\mathfrak g^*\). Since \(G\) is abelian,
	\[
	S(\mathfrak g^*)^G=S(\mathfrak g^*)
	=\mathbb{R}[x_1,x_2].
	\]
	
	Consider the Cartan model of the homogeneous space
	\[
	G/H.
	\]
	Since the \(G\)-action on \(G/H\) is transitive and the stabilizer of
	the identity coset is \(H\), Theorem~\ref{thm:induction-quasi-isomorphism}
	gives
	\[
	H_G^*(G/H;\mathbb R)
	\cong
	H_H^*(\mathrm{pt};\mathbb R).
	\]
	
	The Cartan reduction map is induced by the inclusion
	\[
	\jmath:\mathfrak h\hookrightarrow\mathfrak g .
	\]
	Therefore, on polynomial variables, it is the restriction map
	\[
	S(\mathfrak g^*)^G
	\longrightarrow
	S(\mathfrak h^*)^H .
	\]
	
	Let \(u\in\mathfrak h^*\) be the dual basis element of \(e_1\).
	Then
	\[
	x_1|_{\mathfrak h}=u,
	\qquad
	x_2|_{\mathfrak h}=0 .
	\]
	Hence the Cartan reduction map satisfies
	\[
	\operatorname{Red}_C(x_1)=u,
	\qquad
	\operatorname{Red}_C(x_2)=0 .
	\]
	
	Consequently, the induced homomorphism on invariant polynomial
	algebras is
	\[
	\mathbb R[x_1,x_2]
	\longrightarrow
	\mathbb R[u],
	\]
	given by
	\[
	x_1\longmapsto u,
	\qquad
	x_2\longmapsto0 .
	\]
	
	Therefore the induction theorem gives the explicit computation
	\[
	H_{T^2}^*(T^2/H;\mathbb R)
	\cong
	H_{S^1}^*(\mathrm{pt};\mathbb R)
	=
	\mathbb R[u],
	\qquad |u|=2 .
	\]
	
	This example shows that for abelian groups the Cartan reduction
	is simply the restriction of polynomial variables along the Lie
	algebra inclusion
	\[
	\mathfrak h\hookrightarrow\mathfrak g .
	\]
\end{example}

\begin{example}[The homogeneous space $SO(3)/SO(2)$]
	Let
	\[
	G=SO(3),
	\qquad
	H=SO(2)\subset SO(3),
	\]
	where \(H\) is the subgroup fixing the north pole of \(S^2\).
	Then
	\[
	G/H\simeq S^2 .
	\]
	
	By Theorem~\ref{thm:induction-quasi-isomorphism}, the reduction maps
	\[
	\operatorname{Red}_W:
	\operatorname{Weil}(S^2,SO(3))
	\longrightarrow
	\operatorname{Weil}(\mathrm{pt},SO(2))
	\]
	and
	\[
	\operatorname{Red}_C:
	\operatorname{Car}(S^2,SO(3))
	\longrightarrow
	\operatorname{Car}(\mathrm{pt},SO(2))
	\]
	are quasi-isomorphisms. Hence
	\[
	H_{SO(3)}^*(S^2;\mathbb R)
	\cong
	H_{SO(2)}^*(\mathrm{pt};\mathbb R).
	\]
	
	Since \(SO(2)\) is abelian, its Cartan model at a point is
	\[
	\operatorname{Car}(\mathrm{pt},SO(2))
	=
	S(\mathfrak{so}(2)^*).
	\]
	Let
	\[
	u\in\mathfrak{so}(2)^*
	\]
	be the standard generator. It has cohomological degree two, and
	therefore
	\[
	H_{SO(2)}^*(\mathrm{pt};\mathbb R)
	=
	\mathbb R[u],
	\qquad |u|=2 .
	\]
	
	The reduction morphism identifies the equivariant cohomology of the
	homogeneous space with the equivariant cohomology of the isotropy
	group:
	\[
	H_{SO(3)}^*(S^2;\mathbb R)
	\cong
	\mathbb R[u].
	\]
	
	More explicitly, the generator \(u\) corresponds under the induction
	isomorphism to the equivariant cohomology class on \(S^2\) obtained
	from the invariant polynomial on the Lie algebra
	$
	S(\mathfrak{so}(2)^*)^{SO(2)}.
	$
	Thus the reduction map sends the corresponding equivariant class in
	$
	H_{SO(3)}^2(S^2;\mathbb R)
	$
	to
	the universal degree-two generator of
	$
	H^2(BSO(2);\mathbb R)
	$.

	This example illustrates that for a homogeneous space \(G/H\), the
	equivariant cohomology with respect to the left \(G\)-action is
	completely determined by the characteristic classes of the isotropy
	subgroup \(H\).
\end{example}

\begin{remark}
	\label{cor:abelian-reduction}
	Assume that \(G\) is compact and Abelian, and let \(H\subset G\) be a
	closed subgroup. Then the Cartan reduction morphism
	\[
	\operatorname{Red}_C:
	\operatorname{Car}(G\times_H S,G)
	\longrightarrow
	\operatorname{Car}(S,H)
	\]
	is given by
	\[
	\bigl(\operatorname{Red}_C\alpha\bigr)(\eta)
	=
	i^*\bigl(\alpha(\eta)\bigr),
	\qquad
	\eta\in\mathfrak h.
	\]
	Equivalently, after choosing a decomposition
	\[
	\mathfrak g=\mathfrak h\oplus\mathfrak m,
	\]
	the map restricts the polynomial variables from \(\mathfrak g\) to
	\(\mathfrak h\) and pulls back the differential-form component along
	\[
	i:S\longrightarrow G\times_HS,
	\qquad
	i(s)=[e,s].
	\]
\end{remark}


\subsection{The canonical connection}
\label{subsec:canonical-connection}

Choose an \(\operatorname{Ad}_G\)-invariant inner product on
\(\mathfrak g\). Since \(G\) is compact, such an inner product exists.
Let
\begin{equation}
	\mathfrak g=\mathfrak h\oplus\mathfrak m,
	\qquad
	\mathfrak m=\mathfrak h^\perp.
	\label{eq:reductive-decomposition}
\end{equation}
The subspace \(\mathfrak m\) is \(\operatorname{Ad}_H\)-invariant.

Let
\begin{equation*}
	q:G\times S\longrightarrow G\times_H S
\end{equation*}
be the quotient map. It is a principal \(H\)-bundle with respect to
the action in \eqref{eq:H-action-product}.

Let $L_g: G\rightarrow G$ be the left translation defined by $L_g(h)=gh$, $(L_g)_*: TG\rightarrow TG$ be the tangent map, then the left Maurer–Cartan form $\theta^L\in\Omega^1(G; \mathfrak{g})$ is defined to be
\begin{equation*}
	\theta_g^L(X_g)=(L_{g^{-1}})_*X_g
\end{equation*}
for any $X_g\in T_gG$. Let $R_h:G\rightarrow G$ be the right translation, $R_h^*$ be the pull back of differential forms, then the left Maurer–Cartan form $\theta^L\in\Omega^1(G; \mathfrak{g})$ satisfies \cite{KN}
\begin{equation*}
	\begin{aligned}
		d\theta^L+\frac{1}{2}[\theta^L, \theta^L]&=0,\\
		R_h^*\theta^L&=Ad_{h^{-1}}\theta^L.			
	\end{aligned}
\end{equation*}

Let
\begin{equation*}
	\operatorname{pr}_{\mathfrak h}:
	\mathfrak g\longrightarrow\mathfrak h
\end{equation*}
be the orthogonal projection associated with
\eqref{eq:reductive-decomposition}. Define a Lie algebra valued one form $A\in\Omega^1(G\times S; \mathfrak{h})$ by
\begin{equation*}
	A_{(g,s)}(v_g,v_s)
	=
	\operatorname{pr}_{\mathfrak h}
	\bigl(\theta^L_g(v_g)\bigr).
\end{equation*}

\begin{proposition}
	\label{prop:canonical-connection}
	The form
	\[
	A\in\Omega^1(G\times S;\mathfrak h)
	\]
	is a principal connection on the principal \(H\)-bundle
	\[
	q:G\times S\longrightarrow G\times_H S.
	\]
	It satisfies
	\begin{equation*}
		A(X_{\xi})=\xi,
		\qquad
		R_h^*A=\operatorname{Ad}_{h^{-1}}A
	\end{equation*}
	for every \(\xi\in\mathfrak h\) and \(h\in H\).
\end{proposition}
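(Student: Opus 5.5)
Both properties follow directly from the definition $A=\operatorname{pr}_{\mathfrak h}\circ\theta^L\circ(\mathrm{pr}_G)_*$, where $\mathrm{pr}_G:G\times S\to G$ is the projection. We only need two facts: how the fundamental vector fields of the right $H$-action on $G\times S$ look, and how $\theta^L$ behaves under right translation.

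\emph{Fundamental vector fields.} For $\xi\in\mathfrak h$, the fundamental vector field of the right action $(g,s)\cdot h=(gh,h^{-1}\cdot s)$ is
\[
X_\xi(g,s)=\frac{d}{dt}\Big|_{t=0}\bigl(g\exp(t\xi),\exp(-t\xi)\cdot s\bigr)=\bigl((L_g)_*\xi,\,-X^S_\xi(s)\bigr).
\]
Since $A$ ignores the $S$-component, $A(X_\xi)=\operatorname{pr}_{\mathfrak h}\bigl(\theta^L_g((L_g)_*\xi)\bigr)=\operatorname{pr}_{\mathfrak h}(\xi)=\xi$, because $\xi\in\mathfrak h$.

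\emph{Equivariance.} Write $R_h(g,s)=(gh,h^{-1}s)$ on $G\times S$. Its $G$-component is right translation $R_h$ on $G$, and $A$ kills the $S$-directions. Hence
\[
(R_h^*A)_{(g,s)}(v_g,v_s)=\operatorname{pr}_{\mathfrak h}\bigl((R_h^*\theta^L)_g(v_g)\bigr)=\operatorname{pr}_{\mathfrak h}\bigl(\operatorname{Ad}_{h^{-1}}\theta^L_g(v_g)\bigr),
\]
using the identity $R_h^*\theta^L=\operatorname{Ad}_{h^{-1}}\theta^L$ recalled above. It remains to show that $\operatorname{pr}_{\mathfrak h}$ commutes with $\operatorname{Ad}_{h^{-1}}$ for $h\in H$. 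Since $\operatorname{Ad}_{h^{-1}}$ preserves $\mathfrak h$ and is orthogonal for the $\operatorname{Ad}_G$-invariant inner product, it also preserves $\mathfrak m=\mathfrak h^\perp$. It therefore respects the decomposition $\mathfrak g=\mathfrak h\oplus\mathfrak m$, so it commutes with the projection. This gives $R_h^*A=\operatorname{Ad}_{h^{-1}}A$.

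Finally, $A$ is smooth and $\mathfrak h$-valued, and $q$ is a principal $H$-bundle because $H$ acts freely and properly, which was noted before the proposition. A form that reproduces generators and is $\operatorname{Ad}$-equivariant is, by definition, a principal connection. The only points needing care are the sign of the $S$-component in $X_\xi$, which is harmless since $A$ does not see it, and the $\operatorname{Ad}_H$-invariance of $\mathfrak m$.
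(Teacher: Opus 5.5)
Your proof is correct and follows the same route as the paper. You compute the fundamental vector field of the right $H$-action to get $A(X_\xi)=\operatorname{pr}_{\mathfrak h}(\xi)=\xi$, and you combine $R_h^*\theta^L=\operatorname{Ad}_{h^{-1}}\theta^L$ with the fact that $\operatorname{pr}_{\mathfrak h}$ commutes with $\operatorname{Ad}_h$ for $h\in H$. You also fill in two steps the paper only asserts: why $\mathfrak m=\mathfrak h^\perp$ is $\operatorname{Ad}_H$-invariant, and why only the right-translation component of $R_h$ on $G\times S$ matters.
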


\begin{proof}
	For \(\xi\in\mathfrak h\), the fundamental vector field of the right
	\(H\)-action at \((g,s)\) is
	\begin{equation*}
		X_{\xi}|_{(g,s)}
		=
		\frac{d}{dt}\bigg|_{t=0}
		\left(g\exp(t\xi),\exp(-t\xi)\cdot s\right).
	\end{equation*}
	Since
	\[
	\theta^L_g
	\left(
	\frac{d}{dt}\bigg|_{t=0}g\exp(t\xi)
	\right)
	=\xi,
	\]
	we obtain
	\[
	A(X_{\xi})=\operatorname{pr}_{\mathfrak h}(\xi)=\xi.
	\]
	Note that
	\[
	R_h^*\theta^L=\operatorname{Ad}_{h^{-1}}\theta^L.
	\]
	Because the decomposition
	\(\mathfrak g=\mathfrak h\oplus\mathfrak m\) is
	\(\operatorname{Ad}_H\)-invariant, the projection
	\(\operatorname{pr}_{\mathfrak h}\) commutes with
	\(\operatorname{Ad}_h\). Consequently,
	\[
	R_h^*A
	=
	\operatorname{pr}_{\mathfrak h}
	\left(\operatorname{Ad}_{h^{-1}}\theta^L\right)
	=
	\operatorname{Ad}_{h^{-1}}A.
	\]
	Thus \(A\) is a principal \(H\)-connection.
\end{proof}

Its curvature is
\begin{equation*}
	F_A=dA+\frac12[A,A]
	\in\Omega^2(G\times S;\mathfrak h).
\end{equation*}
If
\[
\theta^L=\theta_{\mathfrak h}+\theta_{\mathfrak m}
\]
is the decomposition determined by
\eqref{eq:reductive-decomposition}, then 
\begin{equation*}
	F_A
	=
	-\frac12
	\operatorname{pr}_{\mathfrak h}
	[\theta_{\mathfrak m},\theta_{\mathfrak m}],
\end{equation*}
implied by the Maurer--Cartan equation.

\subsection{The homogeneous-space case}
\label{subsec:homogeneous-space-case}

Taking \(S=\{\mathrm{pt}\}\), we have
\[
G\times_H\{\mathrm{pt}\}\cong G/H.
\]
Theorem \ref{thm:induction-quasi-isomorphism} immediately gives the
following result.

\begin{corollary}
	\label{cor:equivariant-cohomology-homogeneous-space}
	Let \(H\subset G\) be a closed subgroup of a compact Lie group. Then
	the reduction morphisms induce canonical isomorphisms
	\begin{equation*}
		H_G^*(G/H;\mathbb R)
		\cong
		H_H^*(\{\mathrm{pt}\};\mathbb R)
		\cong
		H^*(BH;\mathbb R).
	\end{equation*}
	In particular,
	\begin{equation}
		H_G^q(G/H;\mathbb R)
		\cong
		\begin{cases}
			0,
			& q \text{ is odd},\\[2mm]
			\bigl(S^{q/2}\mathfrak h^*\bigr)^H,
			& q \text{ is even}.
		\end{cases}
		\label{eq:cohomology-GH-explicit}
	\end{equation}
\end{corollary}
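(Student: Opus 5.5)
The first isomorphism comes from Theorem~\ref{thm:induction-quasi-isomorphism} with \(S=\{\mathrm{pt}\}\) and the trivial \(H\)-action. The map \(G\times_H\{\mathrm{pt}\}\to G/H\), \([g,\mathrm{pt}]\mapsto gH\), is a \(G\)-equivariant diffeomorphism. The reduction morphisms \(\operatorname{Red}_W\) and \(\operatorname{Red}_C\) are therefore quasi-isomorphisms
\[
\operatorname{Car}(G/H,G)\longrightarrow\operatorname{Car}(\{\mathrm{pt}\},H),
\qquad
\operatorname{Weil}(G/H,G)\longrightarrow\operatorname{Weil}(\{\mathrm{pt}\},H).
\]
This gives \(H_G^*(G/H;\mathbb R)\cong H_H^*(\{\mathrm{pt}\};\mathbb R)\). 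The isomorphism is canonical because \(\operatorname{Red}_C\) is defined intrinsically from the pair \((\jmath,i)\) with \(i(\mathrm{pt})=eH\). The second isomorphism is the definition of equivariant cohomology: \(E_H\times_H\{\mathrm{pt}\}=E_H/H=BH\), so \(H_H^*(\{\mathrm{pt}\})=H^*(BH;\mathbb R)\).

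For the explicit formula \eqref{eq:cohomology-GH-explicit}, I will compute the Cartan model at a point directly. Since \(\Omega(\{\mathrm{pt}\})=\mathbb R\) sits in degree \(0\), we have
\[
\operatorname{Car}(\{\mathrm{pt}\},H)=\bigl[\mathbb R\otimes S(\mathfrak h^*)\bigr]^H=S(\mathfrak h^*)^H.
\]
The differential is \(D_C=d\otimes1-\sum_a\iota_a\otimes u^a\). The term \(d\) vanishes on functions on a point. The contractions \(\iota_a\) vanish because every fundamental vector field on a point is zero and there are no forms of positive degree. Hence \(D_C=0\), and the cohomology is the complex itself. Each generator \(u^a\) of \(S(\mathfrak h^*)\) has degree \(2\), so \(S^k(\mathfrak h^*)\) lies in degree \(2k\). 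The cohomology is therefore \((S^{q/2}\mathfrak h^*)^H\) for \(q\) even and \(0\) for \(q\) odd. By Theorem~\ref{thma4}, which applies since \(H\) is compact as a closed subgroup of a compact group, this equals \(H_H^q(\{\mathrm{pt}\};\mathbb R)\).

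One point needs care, and it is where I expect the only subtlety. The Cartan model uses \(H\)-invariance, not just \(\mathfrak h\)-invariance, and for disconnected \(H\) these differ. I will keep the full group invariants \(S(\mathfrak h^*)^H\) throughout, as in the definition of \(\operatorname{Car}\). With that convention the statement follows directly from Theorem~\ref{thma4} and Theorem~\ref{thm:induction-quasi-isomorphism}, so the proof involves no genuine obstacle.
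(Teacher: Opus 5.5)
Your proposal is correct and follows the paper's proof: the first isomorphism comes from Theorem~\ref{thm:induction-quasi-isomorphism} with $S=\{\mathrm{pt}\}$, and the explicit formula comes from $\operatorname{Car}(\{\mathrm{pt}\},H)=S(\mathfrak h^*)^H$, which has zero differential and places $S^k(\mathfrak h^*)$ in degree $2k$. Your additional remarks only make explicit what the paper leaves implicit: the identification $G\times_H\{\mathrm{pt}\}\cong G/H$, the equality $E_H\times_H\{\mathrm{pt}\}=BH$, and the use of full $H$-invariants when $H$ is disconnected.
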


\begin{proof}
	The first isomorphism follows from Theorem
	\ref{thm:induction-quasi-isomorphism}. Since the \(H\)-action on a
	point is trivial, the Cartan complex is
	\[
	\operatorname{Car}(\{\mathrm{pt}\},H)
	=
	S(\mathfrak h^*)^H
	\]
	with zero differential, where elements of
	\(S^k(\mathfrak h^*)\) have cohomological degree \(2k\). This gives
	\eqref{eq:cohomology-GH-explicit}.
\end{proof}

\begin{example}
	Let \(G\) be a compact Lie group with \(\dim G=5\), and let
	\(H\subset G\) be a closed subgroup with \(\dim H=2\). Choose an \(\operatorname{Ad}_G\)-invariant inner product on
	\(\mathfrak g\), and let
	\[
	\mathfrak g=\mathfrak h\oplus\mathfrak m,
	\qquad
	\mathfrak m=\mathfrak h^\perp.
	\]
	Choose a basis
	$
	\{\xi_1,\xi_2\}
	$
	of \(\mathfrak h\), and extend it to a basis
	$
	\{\xi_1,\ldots,\xi_5\}
	$
	of \(\mathfrak g\), where
	$
	\{\xi_3,\xi_4,\xi_5\}
	$
	is a basis of \(\mathfrak m\). Consider the principal \(H\)-bundle
	\[
	q:G\longrightarrow G/H.
	\]
	By Proposition~\ref{prop:canonical-connection},
	\[
	A=\operatorname{pr}_{\mathfrak h}\theta^L
	\in\Omega^1(G;\mathfrak h)
	\]
	is a principal \(H\)-connection. Its curvature is
	\[
	F_A=dA+\frac12[A,A].
	\]
	
	The left \(G\)-action on \(G\) commutes with the right \(H\)-action
	and preserves \(A\). Hence \(A\) is a \(G\)-equivariant connection.
	Define
	\[
	\mu_A:\mathfrak g\longrightarrow C^\infty(G,\mathfrak h)
	\]
	by
	\[
	\mu_A(\xi)=\iota_{X_\xi}A.
	\]
	Explicitly,
	\[
	\mu_A(\xi)(g)
	=
	\operatorname{pr}_{\mathfrak h}
	\bigl(\operatorname{Ad}_{g^{-1}}\xi\bigr).
	\]
	
	Let \(d=2k\), since \(\mathfrak h\) is a two-dimensional compact Lie algebra,
	it is Abelian. Thus, in terms of the generators \(z^1,z^2\) of
	\(W(\mathfrak h^*)\), by linearity it suffices to consider a monomial 	$
	\sigma\in S^k(\mathfrak h^*)^H
	$ where
	\[
	\sigma=z^{i_1}\cdots z^{i_k},
	\qquad
	i_j\in\{1,2\},
	\]
	and provided that $\sigma$ is \(H\)-invariant.
	
	With the convention
	\[
	d_{C,G}=d-\iota_X,
	\]
	the equivariant curvature is
	\[
	F_A^G(\xi)=F_A-\mu_A(\xi).
	\]
	Define
	\[
	\beta_\sigma(\xi)
	=
	(-1)^k
	\sigma\bigl(F_A^G(\xi)\bigr).
	\]
	By the equivariant Chern--Weil construction,
	\[
	\beta_\sigma\in\operatorname{Car}^d(G/H,G)
	\]
	is \(d_{C,G}\)-closed. Let
	\[
	i:\{\mathrm{pt}\}\longrightarrow G/H,
	\qquad
	i(\mathrm{pt})=[e].
	\]
	For every \(\eta\in\mathfrak h\),
	\[
	\mu_A(\eta)(e)=\eta,
	\]
	whereas
	\[
	i^*F_A=0.
	\]
	Consequently,
	\[
	\begin{aligned}
		\bigl(\operatorname{Red}_C\beta_\sigma\bigr)(\eta)
		&=
		i^*\bigl(\beta_\sigma(\eta)\bigr)\\
		&=
		(-1)^k\sigma(-\eta)\\
		&=
		\sigma(\eta).
	\end{aligned}
	\]
	Therefore
	\[
	\operatorname{Red}_C(\beta_\sigma)=\sigma.
	\]
	Let
	\[
	\Phi_G:
	\operatorname{Weil}(G/H,G)
	\longrightarrow
	\operatorname{Car}(G/H,G)
	\]
	be the Mathai--Quillen isomorphism, and define
	\[
	\tau_\sigma=\Phi_G^{-1}(\beta_\sigma).
	\]
	Then
	\[
	\tau_\sigma\in\operatorname{Weil}^d(G/H,G)
	\]
	is a \(G\)-basic cocycle. If
	\[
	\gamma_G
	=
	\sum_{a=1}^5
	\iota_{X_{\xi_a}}\otimes\theta^a,
	\]
	then locally
	\[
	\tau_\sigma
	=
	e^{-\gamma_G}\beta_\sigma
	=
	\beta_\sigma
	-\gamma_G\beta_\sigma
	+\frac{1}{2!}\gamma_G^2\beta_\sigma
	-\cdots .
	\]
	This is a finite correction expansion in every fixed total degree.
	
	By Proposition~\ref{prop:compatibility-MQ-reduction},
	\[
	\Phi_H\circ\operatorname{Red}_W
	=
	\operatorname{Red}_C\circ\Phi_G.
	\]
	Since \(H\) is two-dimensional and hence has Abelian Lie algebra,
	the Mathai--Quillen transformation on the point identifies
	\(S(\mathfrak h^*)^H\) with the basic Weil algebra. Hence
	\[
	\operatorname{Red}_W(\tau_\sigma)
	=
	1\otimes\sigma.
	\]
	Therefore
	\[
	H^*(\operatorname{Red}_W)([\tau_\sigma])
	=
	[1\otimes\sigma],
	\]
	and
	\[
	H^*(\operatorname{Red}_W)^{-1}
	\bigl([1\otimes\sigma]\bigr)
	=
	[\tau_\sigma].
	\]
	
	When \(H\) is normal in \(G\), the construction simplifies. In
	particular, if the polynomial \(\sigma\), regarded in
	\(W(\mathfrak g^*)\), is already \(G\)-basic, then one may take
	\[
	\tau_\sigma=1\otimes\sigma.
	\]
\end{example}

\subsection{Application to a slice}
\label{subsec:application-slice}

Let \(G\) act properly on a smooth manifold \(N\), let \(x\in N\), and
let
\[
H=G_x
\]
be the isotropy group of \(x\). Let \(S_x\) be an \(H\)-invariant
slice at \(x\). After shrinking the slice if necessary, the slice
theorem gives a \(G\)-equivariant diffeomorphism
\begin{equation*}
	\varphi:
	G\times_HS_x
	\xrightarrow{\cong}
	U_x,
\end{equation*}
where \(U_x\subset N\) is a \(G\)-invariant neighborhood of the orbit
\(G\cdot x\).

Define
\begin{equation*}
	\operatorname{Red}_{W,x}
	=
	\operatorname{Red}_W\circ\varphi^*:
	\operatorname{Weil}(U_x,G)
	\longrightarrow
	\operatorname{Weil}(S_x,H),
\end{equation*}
and
\begin{equation*}
	\operatorname{Red}_{C,x}
	=
	\operatorname{Red}_C\circ\varphi^*:
	\operatorname{Car}(U_x,G)
	\longrightarrow
	\operatorname{Car}(S_x,H).
\end{equation*}

\begin{theorem}
	\label{thm:slice-reduction-quasi-isomorphism}
	The maps
	\[
	\operatorname{Red}_{W,x}
	\quad\text{and}\quad
	\operatorname{Red}_{C,x}
	\]
	are quasi-isomorphisms. Consequently,
	\begin{equation*}
		H_G^*(U_x;\mathbb R)
		\cong
		H_{G_x}^*(S_x;\mathbb R).
	\end{equation*}
\end{theorem}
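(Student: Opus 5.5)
The plan is to factor each map as a composite of two quasi-isomorphisms. By definition,
\[
\operatorname{Red}_{W,x}=\operatorname{Red}_W\circ\varphi^*,
\qquad
\operatorname{Red}_{C,x}=\operatorname{Red}_C\circ\varphi^*,
\]
where $\varphi:G\times_HS_x\to U_x$ is the $G$-equivariant diffeomorphism supplied by the slice theorem. Theorem~\ref{thm:induction-quasi-isomorphism} already says that $\operatorname{Red}_W$ and $\operatorname{Red}_C$ are quasi-isomorphisms for $H=G_x$ and $S=S_x$. So it suffices to show that $\varphi^*$ is an isomorphism of complexes, not merely a quasi-isomorphism, in both the Weil and the Cartan models.

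For the Cartan model, I will apply Proposition~\ref{prop:Cartan-functoriality} to the equivariant pair $(\mathrm{id}_G,\varphi):(G,G\times_HS_x)\to(G,U_x)$. This gives a morphism of differential graded algebras
\[
\varphi^*:\operatorname{Car}(U_x,G)\to\operatorname{Car}(G\times_HS_x,G),
\qquad
(\varphi^*\alpha)(\xi)=\varphi^*(\alpha(\xi)).
\]
Applying the same proposition to the pair $(\mathrm{id}_G,\varphi^{-1})$, which is legitimate because $\varphi^{-1}$ is also $G$-equivariant and smooth, gives a morphism $(\varphi^{-1})^*$ in the other direction. Contravariance of pullback, $(\varphi^{-1})^*\varphi^*=\mathrm{id}$ and $\varphi^*(\varphi^{-1})^*=\mathrm{id}$ on forms, shows these two morphisms are mutually inverse isomorphisms of complexes.

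For the Weil model, I take $\varphi^*\otimes\mathrm{id}_{W(\mathfrak g^*)}$. The fundamental vector fields are $\varphi$-related, so $\varphi^*$ commutes with $d$, $\iota_a$ and $\mathcal L_a$. The argument is the same as in Lemma~\ref{lem:naturality-Weil-operators}, with $\jmath=\mathrm{id}$. Hence $\varphi^*\otimes\mathrm{id}$ preserves basic elements and commutes with $D$. Its inverse is $(\varphi^{-1})^*\otimes\mathrm{id}$, so it is again an isomorphism of complexes. As a consistency check, this map intertwines the two Mathai--Quillen isomorphisms $\Phi_G$, because $\gamma_G$ commutes with $\varphi^*\otimes\mathrm{id}$, by the argument of Proposition~\ref{prop:compatibility-MQ-reduction}. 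Alternatively, once the Cartan statement is proved, the Weil statement follows by conjugating with $\Phi_G$ and $\Phi_H$.

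Composing an isomorphism with a quasi-isomorphism gives a quasi-isomorphism, so $\operatorname{Red}_{W,x}$ and $\operatorname{Red}_{C,x}$ are quasi-isomorphisms. The cohomological conclusion then follows from Theorem~\ref{thma4}, since $G$ and $G_x$ are compact:
\[
H_G^*(U_x)\cong H^*(\operatorname{Car}(U_x,G))\cong H^*(\operatorname{Car}(S_x,G_x))\cong H^*_{G_x}(S_x).
\]

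The only point needing care is that the slice-theorem diffeomorphism is genuinely $G$-equivariant and that $U_x$ is $G$-invariant, so that $\operatorname{Weil}(U_x,G)$ and $\operatorname{Car}(U_x,G)$ make sense. Both facts are part of the statement of the slice theorem for proper actions with compact $G$. Given that, there is no real obstacle: all the substantive work was done in Theorem~\ref{thm:induction-quasi-isomorphism}.
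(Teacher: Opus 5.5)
Your proposal is correct and takes the same route as the paper. The paper's proof observes that $\varphi^*$ is an isomorphism of the Weil and Cartan complexes because $\varphi$ is a $G$-equivariant diffeomorphism, and then invokes Theorem~\ref{thm:induction-quasi-isomorphism}; you do the same, but spell out the isomorphism check more fully, using Proposition~\ref{prop:Cartan-functoriality} with $\mathrm{id}_G$ and the $\varphi$-relatedness of the fundamental vector fields.
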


\begin{proof}
	The pullback \(\varphi^*\) is an isomorphism of the corresponding
	Weil and Cartan complexes because \(\varphi\) is a \(G\)-equivariant
	diffeomorphism. The conclusion therefore follows from Theorem
	\ref{thm:induction-quasi-isomorphism}.
\end{proof}

If \(S_x\) is chosen to be an invariant open ball in the normal
representation at \(x\), then radial contraction gives a
\(G_x\)-equivariant homotopy between \(S_x\) and the point \(x\).
Therefore we obtain the following local computation.

\begin{corollary}
	\label{cor:local-equivariant-cohomology}
	Let \(S_x\) be a sufficiently small \(G_x\)-invariant ball. Then
	\begin{equation*}
		H_G^q(U_x;\mathbb R)
		\cong
		H_{G_x}^q(S_x;\mathbb R)
		\cong
		H^q(BG_x;\mathbb R).
	\end{equation*}
	Equivalently,
	\begin{equation*}
		H_G^q(U_x;\mathbb R)
		\cong
		\begin{cases}
			0,
			& q \text{ is odd},\\[2mm]
			\bigl(S^{q/2}\mathfrak g_x^*\bigr)^{G_x},
			& q \text{ is even}.
		\end{cases}
	\end{equation*}
\end{corollary}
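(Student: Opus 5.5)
The plan has two steps: pass from $U_x$ to the slice, then contract the slice to the point $x$.

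First, Theorem~\ref{thm:slice-reduction-quasi-isomorphism} gives the isomorphism $H_G^q(U_x;\mathbb R)\cong H_{G_x}^q(S_x;\mathbb R)$ directly.

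Second, we need $H_{G_x}^*(S_x)\cong H_{G_x}^*(\{x\})$. Let $H=G_x$. By the slice theorem we may take $S_x$ to be an $H$-invariant open ball about $0$ in the normal representation $V=T_xN/T_x(G\cdot x)$, with $H$ acting linearly and orthogonally for an $H$-invariant inner product. The radial map $r_t(v)=tv$, $t\in[0,1]$, then preserves $S_x$ and commutes with the $H$-action. Hence the inclusion $j:\{0\}\hookrightarrow S_x$ and the projection $p:S_x\to\{0\}$ are $H$-equivariant and mutually inverse up to $H$-equivariant homotopy, since $p\circ j=\mathrm{id}$ and $j\circ p=r_0\simeq r_1=\mathrm{id}$.

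At the Cartan level I would show that the pair $(\mathrm{id}_H,j)^*$ from Proposition~\ref{prop:Cartan-functoriality} is a quasi-isomorphism. Take the homotopy operator $K$ obtained by integrating along $t$. For $\alpha\in\operatorname{Car}(S_x,H)$, viewed as an equivariant polynomial $\alpha(\eta)$, set
\[
K\alpha(\eta)=\int_0^1 \iota_{\partial_t}\bigl(F^*\alpha(\eta)\bigr)\,dt,
\qquad F(t,v)=tv .
\]
The $H$-equivariance of $F$ gives $K\iota_{X_\eta}=-\iota_{X_\eta}K$, since the fundamental vector fields on $[0,1]\times S_x$ are tangent to the $S_x$-factor. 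The usual homotopy formula $dK+Kd=r_1^*-r_0^*$ then upgrades to $d_CK+Kd_C=\mathrm{id}-(j\circ p)^*$ on $\operatorname{Car}(S_x,H)$. Invariance of $K\alpha$ follows because $F$ commutes with $H$.

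Equivalently, one may bypass the chain level entirely: the Borel constructions $E_H\times_H S_x$ and $E_H\times_H\{0\}=BH$ are homotopy equivalent via the induced maps of $j$ and $p$, and the de Rham isomorphism (Theorem~\ref{thma4}) together with the naturality already used in the proof of Theorem~\ref{thm:induction-quasi-isomorphism} transfers this to the models. This shows that $H_{G_x}^*(S_x)\cong H^*(BG_x;\mathbb R)$.

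Finally, the explicit form follows exactly as in Corollary~\ref{cor:equivariant-cohomology-homogeneous-space}. The Cartan complex of a point is $\operatorname{Car}(\{\mathrm{pt}\},G_x)=S(\mathfrak g_x^*)^{G_x}$ with zero differential, since there are no positive-degree forms and $\iota_{X}$ vanishes. Because $S^k$ sits in degree $2k$, the odd-degree groups vanish, and $H^{q}=(S^{q/2}\mathfrak g_x^*)^{G_x}$ for $q$ even.

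The only step needing care is the equivariance of the homotopy operator, that is, checking that $K$ anticommutes with $\iota_{X_\eta}$ and commutes with the $H$-action. This rests entirely on the linearity of the slice action, which is why $S_x$ is chosen as a ball in the normal representation rather than an arbitrary slice.
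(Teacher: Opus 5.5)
Your proposal is correct and follows the paper's proof: Theorem~\ref{thm:slice-reduction-quasi-isomorphism} gives the first isomorphism, the $G_x$-equivariant radial contraction $r_t(v)=tv$ of the ball gives the second, and the Cartan model of a point, $S(\mathfrak g_x^*)^{G_x}$ with zero differential, gives the explicit form. Your homotopy operator $K$, anticommuting with $\iota_{X_\eta}$ so that $d_CK+Kd_C=\mathrm{id}-(j\circ p)^*$, supplies the chain-level detail behind the equivariant contractibility step, which the paper only asserts.
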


\begin{proof}
	The radial homotopy
	\[
	r_t:S_x\longrightarrow S_x,
	\qquad
	r_t(v)=tv,
	\qquad
	0\leq t\leq1,
	\]
	is \(G_x\)-equivariant. Thus \(S_x\) is \(G_x\)-equivariantly
	contractible and
	\[
	H_{G_x}^*(S_x;\mathbb R)
	\cong
	H_{G_x}^*(\{\mathrm{pt}\};\mathbb R).
	\]
	The result follows from Theorem
	\ref{thm:slice-reduction-quasi-isomorphism}.
\end{proof}

\section{Slice groupoids}

In this section we define a special type of groupoid, called a slice groupoid. Slice groupoids are locally equivalent to action groupoids $G\ltimes S$, where $S$ is a contractible space. The notion of a slice groupoid generalizes the local framework of orbifolds and action groupoids. We also generalize equivariant cohomology to slice groupoids using sheaf-theoretic methods. 

Given two groupoids $\mathcal{G}$ and $\mathcal{H}$, a homomorphism
\begin{equation*}
	\phi:\mathcal{H}\rightarrow\mathcal{G}
\end{equation*}
consists of two continuous maps
\begin{equation*}
	\begin{aligned}
		\phi_0:H_0\rightarrow G_0\\
		\phi_1:H_1\rightarrow G_1
	\end{aligned}
\end{equation*}
that commute with all the structure maps of the groupoids. If both $\phi_0$ and $\phi_1$ are embeddings, we call $\phi$ an embedding.

\begin{definition}
	A homomorphism $\phi:\mathcal{H}\rightarrow\mathcal{G}$ is called an equivalence if 
\begin{enumerate}
	\item the mapping
	\begin{equation*}
		t\circ\pi_1: G_1\text{\ }_s\times_\phi\text{\ }H_0\rightarrow G_0
	\end{equation*}
defined on the fibered product $\{(g,y)\text{\ }|\text{\ }g\in G_1,y\in H_0, s(g)=\phi(y)\}$ is a continuous open surjection, where $\pi_1$ is the projection to the first component.\\
\item $H_1$ is homeomorphic to the fibre product 
\begin{equation*}
	H_1\cong (H_0\times H_0)\text{\ }_{\phi_0\times\phi_0}\times_{(s,t)}\text{\ }G_1.
\end{equation*}
\end{enumerate}
\end{definition}
Note that $\mathcal{H}$ and $\mathcal{G}$ are not required to be Lie groupoids. A similar definition is given in \cite{AAR} when $\mathcal{H}$ and $\mathcal{G}$ are Lie groupoids. If $\mathcal{H}$ and $\mathcal{G}$ are equivalent, then $|\mathcal{G}|$ is homeomorphic to $|\mathcal{H}|$. 

\begin{remark}
	Given two groupoids $\mathcal{G}$ and $\mathcal{H}$, if there exist a groupoid $\mathcal{K}$ and equivalence
	\begin{equation*}
		\mathcal{G}\leftarrow\mathcal{K}\rightarrow\mathcal{H},
	\end{equation*}
then we say that $\mathcal{G}$ and $\mathcal{H}$ are Morita equivalent.
\end{remark}

\begin{definition}
	Let $\mathcal{G}:G_1\rightrightarrows G_0$ be a groupoid such that $|\mathcal{G}|$ is Hausdorff, and let $\pi:G_0\rightarrow|\mathcal{G}|$ be the orbit projection. For every $y\in|\mathcal{G}|$, suppose that there exist a neighborhood $U_y$ and an equivalence 
	\begin{equation*}
		\Psi_{U_y}:\mathcal{G}|_{U_y}\rightarrow G_y\ltimes S_y
	\end{equation*}
	where $S_y$ is a space diffeomorphic to a Euclidean space $\mathbb{R}^m$, all points in the orbit corresponding to $y$ are mapped to the origin $0\in S_y$, and $G_y$ fixes the origin. We call $\mathcal{G}$ locally linearizable, and we call $(U_y,\Psi_{U_y})$ a local linearized chart of $\mathcal{G}$.
\end{definition}
   
Note that the dimensions of $G_y$ and $S_y$ depend on $y$, and $\mathcal{G}$ is not required to be a Lie groupoid in the definition. Globally, a local linearizable groupoid might not be equivalent or Morita equivalent to an action groupoid.

\begin{example}
Let $G_0=\mathbb{R}\sqcup\mathbb{C}$. We construct the groupoid by gluing a $\mathbb{Z}_2$-action on $\mathbb{R}$ and an $S^1$-action on $\mathbb{C}$.
\begin{center}
	\includegraphics[width=0.5\textwidth]{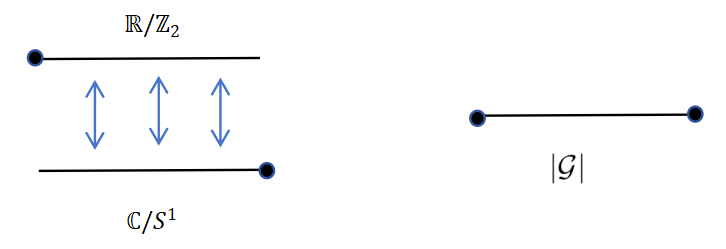}
\end{center}
Explicitly, let $A$ and $A^{-1}$ be the sets
	\begin{equation*}
		\begin{split}
					A=&\{(x,z)\in G_0\times G_0\text{\ \ }|\text{\ \ } x\in\mathbb{R},z\in\mathbb{C},x\neq 0, |z|=\frac{1}{|x|}\}\\
					A^{-1}=&\{(z,x)\in G_0\times G_0\text{\ \ }|\text{\ \ } x\in\mathbb{R},z\in\mathbb{C},x\neq 0, |z|=\frac{1}{|x|}\}. 
		\end{split}
	\end{equation*}
Let 
\begin{equation*}
	G_1=(\mathbb{Z}_2\times\mathbb{R})\sqcup (S^1\times\mathbb{C})\sqcup A\sqcup A^{-1}
\end{equation*}
and define the source and target maps by
\begin{equation*}
	\begin{aligned}
			&s((g,x))=x,\text{\ \ \ \ \ \ } &&t((g,x))=gx, &&\text{\ \ \ \ for $(g,x)\in\mathbb{Z}_2\times\mathbb{R}$}\\
		&s((e^{i\theta},z))=z,\text{\ \ \ \ } &&t((e^{i\theta},z))=e^{i\theta}z, &&\text{\ \ \ \ for $(e^{i\theta},z)\in S^1\times\mathbb{C}$}\\ 
		&s((x,z))=x,\text{\ \ \ \ \ \ } &&t((x,z))=z, &&\text{\ \ \ \ for $(x,z)\in A$}\\
		&s((z,x))=z,\text{\ \ \ \ \ \ } &&t((z,x))=x, &&\text{\ \ \ \ for $(z,x)\in A^{-1}$},
	\end{aligned}
\end{equation*}
Then one can check that $\mathcal{G}:G_1\rightrightarrows G_0$ is a groupoid. Let $|\mathcal{G}|$ be the orbit space of $\mathcal{G}$ and
\begin{equation*}
	\pi:G_0\rightarrow|\mathcal{G}|
\end{equation*}
be the projection. Then there are three types of points in $|\mathcal{G}|$:
\begin{enumerate}
	\item The first type is $\pi(x_0)$ where $x_0=0\in\mathbb{R}\subset G_0$. We can choose a neighborhood $U_{\pi(x_0)}\subset|\mathcal{G}|$ such that $\mathcal{G}|_{U_{\pi(x_0)}}$ is Morita equivalent to an action groupoid $\mathbb{Z}_2\ltimes S_0$ where $S_0$ is diffeomorphic to $\mathbb{R}$.
	\item The second type is $\pi(x)$ where $x\in\mathbb{R}\subset G_0$ and $x\neq 0$. We can choose a neighborhood $U_{\pi(x)}$ of $\pi(x)$ such that $\mathcal{G}|_{U_{\pi(x)}}$ is Morita equivalent to an action groupoid $\{e\}\ltimes S_x$ where $S_x$ is diffeomorphic to $\mathbb{R}$.
	\item The third type is $\pi(z_0)$ where $z_0=0\in\mathbb{C}\subset G_0$. We can choose a neighborhood $U_{\pi(z_0)}\subset|\mathcal{G}|$ such that $\mathcal{G}|_{U_{\pi(z_0)}}$ is Morita equivalent to an action groupoid $S^1\ltimes S_{z_0}$ where $S_{z_0}$ is diffeomorphic to $\mathbb{C}$. 
\end{enumerate}
\end{example}

\begin{definition}
	\label{def:compatible-local-linearized-charts}
	Let
	$\mathcal G:G_1\rightrightarrows G_0
	$
	be a groupoid and let
	$
	X=|\mathcal G|
	$
	be its orbit space. A local linearized chart on an open subset
	$
	U\subset X
	$
	consists of a local model
	\[
	\Psi_U:
	\mathcal G|_U
	\longrightarrow
	G_U\ltimes S_U,
	\]
	where $G_U$ is a compact Lie group and $S_U$ is a smooth
	$G_U$-manifold.
	
	We say that a collection
	\[
	\left\{
	(U,\Psi_U)
	\right\}_{U\in\mathcal B}
	\]
	of local linearized charts is \emph{compatible} if the following
	conditions are satisfied.
	
	\begin{enumerate}
		
		\item
		The collection $\mathcal B$ of chart domains forms a basis for the
		topology of $X$.  In particular, for every open subset $O\subset X$
		and every point $y\in O$, there exists $U\in\mathcal B$ such that
		\[
		y\in U\subset O.
		\]
		
		\item
		Let
		$
		(U,\Psi_U)$,
		and
		$(V,\Psi_V)
		$
		be two local linearized charts and let
		$
		y\in U\cap V.
		$
		Then there exists a local linearized chart
		$
		(W,\Psi_W)$ with
	   $y\in W\subset U\cap V,
		$
		together with embeddings of action groupoids
		\[
		\lambda_{W,U}:
		G_W\ltimes S_W
		\longrightarrow
		G_U\ltimes S_U
		\]
		and
		\[
		\lambda_{W,V}:
		G_W\ltimes S_W
		\longrightarrow
		G_V\ltimes S_V.
		\]
		
		Each such embedding is induced by an equivariant pair
		\[
		(\jmath_{W,U},i_{W,U}):
		(G_W,S_W)
		\longrightarrow
		(G_U,S_U)
		\]
		and, respectively,
		\[
		(\jmath_{W,V},i_{W,V}):
		(G_W,S_W)
		\longrightarrow
		(G_V,S_V),
		\]
		where
		\[
		\jmath_{W,U}:G_W\longrightarrow G_U,
		\qquad
		\jmath_{W,V}:G_W\longrightarrow G_V
		\]
		are Lie group homomorphisms and
		\[
		i_{W,U}:S_W\longrightarrow S_U,
		\qquad
		i_{W,V}:S_W\longrightarrow S_V
		\]
		are equivariant smooth embeddings.
		
		Let
		\[
		q_U:S_U\longrightarrow S_U/G_U\simeq U,
		\]
		\[
		q_V:S_V\longrightarrow S_V/G_V\simeq V,
		\]
		and
		\[
		q_W:S_W\longrightarrow S_W/G_W\simeq W
		\]
		be the quotient maps.  The embeddings are required to induce the
		ordinary inclusions on the orbit spaces:
		\[
		q_U\circ i_{W,U}
		=
		\iota_{W,U}\circ q_W
		\]
		\[
		q_V\circ i_{W,V}
		=
		\iota_{W,V}\circ q_W,
		\]
		where
		\[
		\iota_{W,U}:W\hookrightarrow U,
		\qquad
		\iota_{W,V}:W\hookrightarrow V
		\]
		are the inclusion maps.
		
		\item
		Refinement embeddings are compatible with composition.  More
		precisely, suppose
		$
		W\subset V\subset U
		$
		are chart domains and refinement embeddings
		\[
		\lambda_{W,V}:
		G_W\ltimes S_W
		\longrightarrow
		G_V\ltimes S_V
		\]
		\[
		\lambda_{V,U}:
		G_V\ltimes S_V
		\longrightarrow
		G_U\ltimes S_U
		\]
		are given.  Then their composition
		\[
		\lambda_{V,U}\circ\lambda_{W,V}:
		G_W\ltimes S_W
		\longrightarrow
		G_U\ltimes S_U
		\]
		is again an admissible refinement embedding.
		
		Equivalently, for the corresponding equivariant pairs,
		\[
		\jmath_{W,U}
		=
		\jmath_{V,U}\circ\jmath_{W,V},
		\]
		\[
		i_{W,U}
		=
		i_{V,U}\circ i_{W,V}
		\]
		whenever $\lambda_{W,U}$ is defined to be this composite.
		
		\item
		The refinement embeddings are locally coherent.  Namely, suppose
		two refinement embeddings
		\[
		\lambda_{V,U}^{(1)},
		\lambda_{V,U}^{(2)}:
		G_V\ltimes S_V
		\longrightarrow
		G_U\ltimes S_U
		\]
		represent two choices of comparison between the same local models.
		For every point
		$
		y\in V,
		$
		there exists a chart domain
		$
		W\in\mathcal B$,
		with
		$y\in W\subset V,
		$
		and a refinement embedding
		\[
		\lambda_{W,V}:
		G_W\ltimes S_W
		\longrightarrow
		G_V\ltimes S_V
		\]
		such that the two induced comparison morphisms agree after restriction
		to $W$. In particular, the corresponding morphisms of Cartan complexes agree
		after passing to $W$.
		
	\end{enumerate}
	
	A groupoid equipped with such a compatible system of local linearized
	charts will be called a \emph{slice groupoid}.
\end{definition}

\begin{remark}
	\label{rem:compatible-charts-local}
	The compatibility condition above is local.  It does not require a
	specified comparison morphism for every inclusion of chart domains
	$
	V\subset U.
	$
	Instead, it requires that comparison morphisms exist after passing to
	sufficiently small neighborhoods and that different choices become
	equal after a further refinement.
	
	This is precisely the amount of compatibility needed to define germs
	of local Cartan elements.
\end{remark}

Recall that when $M$ is a manifold with a proper action of a compact Lie group $G$, the slice theorem gives, for every $y\in M/G$ and every $x\in\pi^{-1}(y)$,
\begin{equation*}
	\widetilde{U}\simeq G\times_{G_x}S_x,
\end{equation*}
where $\widetilde{U}\subset M$ is a $G$-invariant neighborhood of the orbit $\pi^{-1}(y)$. We use the right $G_x$-action on $G\times S_x$ and the left $G$-action on $G\times_{G_x}S_x$ given by
\begin{equation*}
	(g,p)\cdot h=(gh,h^{-1}\cdot p)
\end{equation*}
and
\begin{equation*}
	k\cdot[g,q]=[kg,q].
\end{equation*}

\begin{theorem}
	The action groupoids $G_x\ltimes S_x$ and $G\ltimes(G\times_{G_x}S_x)$
	are equivalent. In particular, they are Morita equivalent.
\end{theorem}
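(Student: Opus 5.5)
We prove the statement by exhibiting the groupoid homomorphism induced by the equivariant pair $(\jmath,i)$ of Section~\ref{sec:reduction-induced-space}, and then checking the two conditions in the definition of an equivalence. Write $H=G_x$, $S=S_x$ and $M=G\times_HS$. Define $\phi:H\ltimes S\to G\ltimes M$ by
\[
\phi_0(s)=i(s)=[e,s],\qquad \phi_1(h,s)=(\jmath(h),[e,s]),
\]
where an arrow $(h,s)$ has source $s$ and target $h\cdot s$. The $H$-equivariance of $i$ shows that $\phi$ commutes with source and target, since $[e,h\cdot s]=h\cdot[e,s]$. Compatibility with multiplication, units and inverses is immediate from the facts that $\jmath$ is a homomorphism and $i$ is equivariant. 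Both $\phi_0$ and $\phi_1$ are continuous, and in fact are embeddings.

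For condition (1), consider the map $t\circ\pi_1:(k,m,s)\mapsto k\cdot m$ on the fibre product $\{(k,m,s): m=[e,s]\}$. This fibre product is homeomorphic to $G\times S$ via $(k,s)$, and under this identification the map becomes $(k,s)\mapsto[k,s]$, which is exactly the quotient map $q:G\times S\to G\times_HS$. The map $q$ is surjective. It is open because it is the projection of a principal $H$-bundle; more elementarily, for $V$ open, $q^{-1}(q(V))=\bigcup_{h}V\cdot h$ is a union of open sets.

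For condition (2), we must identify $H\times S$ with the space
\[
\{(s_1,s_2,k):k\cdot[e,s_1]=[e,s_2]\}.
\]
The key step, and the only real content of the argument, is the following. Suppose $[k,s_1]=[e,s_2]$. Then there exists $h\in H$ with $(k,s_1)=(eh,h^{-1}s_2)$, so $k=h\in H$ and $s_2=k\cdot s_1$. Such an $h$ is unique because the right $H$-action on $G\times S$ is free. The map
\[
(h,s)\mapsto(s,\ h\cdot s,\ \jmath(h))
\]
is therefore a continuous bijection onto the fibre product. Its inverse is $(s_1,s_2,k)\mapsto(k,s_1)$, which is continuous as the restriction of a projection; this uses that $H$ is closed in $G$, so that $k\in H$ carries the subspace topology. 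Hence the map is a homeomorphism.

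I expect the main obstacle to be this topological bookkeeping: openness in (1) and continuity of the inverse in (2). Both reduce to the facts that $q$ is a principal $H$-bundle projection and that $H$ is a closed subgroup. Finally, Morita equivalence follows by taking $\mathcal K=H\ltimes S$ with the identity equivalence on one side and $\phi$ on the other. Equivalently, the homeomorphism of Borel constructions $\bar i$ from the proof of Theorem~\ref{thm:induction-quasi-isomorphism} is the topological shadow of this equivalence.
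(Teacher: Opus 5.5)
Your proposal is correct and follows essentially the same route as the paper: the same morphism $s\mapsto[e,s]$, $(h,s)\mapsto(h,[e,s])$; condition (1) identified with the quotient map $q:G\times S_x\to G\times_{G_x}S_x$; and the fibre product in condition (2) identified with $G_x\times S_x$ via $(h,s)\mapsto(s,h\cdot s,h)$. Your extra topological checks (openness of $q$ via saturations $\bigcup_h V\cdot h$, and continuity of the inverse using that $G_x$ is closed) just spell out what the paper asserts when it calls $q$ a surjective submersion and the comparison map a diffeomorphism.
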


\begin{proof}
	Define 
	\[
	\Phi:G_x\ltimes S_x
	\longrightarrow
	G\ltimes(G\times_{G_x}S_x)
	\]
by
\begin{equation*}
	\begin{aligned}
		\Phi_0:S_x&\longrightarrow G\times_{G_x}S_x\\
		s&\mapsto[e,s]
	\end{aligned}
\end{equation*}
and

\begin{equation*}
	\begin{aligned}
			\Phi_1:G_x\times S_x
		&\longrightarrow
		G\times(G\times_{G_x}S_x)\\
	(h,s)&\mapsto(h,[e,s]).
	\end{aligned}
\end{equation*}
Then
	\[
	s\bigl(\Phi_1(h,s)\bigr)=[e,s]=\Phi_0(s),
	\]
	while
	\[
	t\bigl(\Phi_1(h,s)\bigr)
	=
	h\cdot[e,s]
	=
	[h,s].
	\]
	Since
	\[
	[h,s]=[e,h\cdot s],
	\]
	we obtain
	\[
	t\bigl(\Phi_1(h,s)\bigr)
	=
	[e,h\cdot s]
	=
	\Phi_0(h\cdot s).
	\]
	Thus \(\Phi\) is a well-defined morphism of action groupoids.
	
	Next, we verify that \(\Phi\) is an equivalence. Firstly, consider the map
	\[
	t\circ\operatorname{pr}_1:
	\bigl(G\times(G\times_{G_x}S_x)\bigr)
	{}_s\times_{\Phi_0} S_x
	\longrightarrow
	G\times_{G_x}S_x,
	\]
	where
	\[
	\bigl(G\times(G\times_{G_x}S_x)\bigr)
	{}_s\times_{\Phi_0} S_x
	=
	\left\{
	\bigl((g,[e,s]),s\bigr)
	\,\middle|\,
	g\in G,\ s\in S_x
	\right\}.
	\]
	This fiber product is naturally diffeomorphic to \(G\times S_x\) via
	\begin{equation*}
		\begin{aligned}
				G\times S_x
			&\longrightarrow
			\bigl(G\times(G\times_{G_x}S_x)\bigr)
			{}_s\times_{\Phi_0} S_x\\
				(g,s)&\longmapsto \bigl((g,[e,s]),s\bigr).
		\end{aligned}
	\end{equation*}
	Under this identification, the map
	\(t\circ\operatorname{pr}_1\) becomes
	\[
	q:G\times S_x\longrightarrow G\times_{G_x}S_x,
	\qquad
	q(g,s)=[g,s].
	\]
	The map \(q\) is the quotient map of the free and proper right
	\(G_x\)-action
	\[
	(g,s)\cdot h=(gh,h^{-1}s).
	\]
	Hence \(q\) is a surjective submersion. Therefore the first condition
	for equivalence is satisfied.
	
	It remains to verify full faithfulness. Consider the fiber product
	\[
	(S_x\times S_x)
	{}_{\Phi_0\times\Phi_0}\times_{(s,t)}
	\bigl(G\times(G\times_{G_x}S_x)\bigr).
	\]
	An element of this fiber product is a triple
	\[
	(s_1,s_2,g)
	\]
	such that
	\[
	[e,s_1]
	=
	s(g,[e,s_1])
	\]
	and
	\[
	g\cdot[e,s_1]
	=
	[e,s_2].
	\]
	Thus the condition is precisely
	\[
	[g,s_1]=[e,s_2].
	\]
	By the definition of the quotient \(G\times_{G_x}S_x\), this equality
	holds if and only if there exists \(h\in G_x\) such that
	\[
	g=h
	\qquad\text{and}\qquad
	s_2=h\cdot s_1.
	\]
	Indeed,
	\[
	[g,s_1]=[e,s_2]
	\]
	means that for some \(h\in G_x\),
	\[
	(g,s_1)\cdot h=(e,s_2),
	\]
	that is,
	\[
	gh=e,
	\qquad
	h^{-1}s_1=s_2.
	\]
	Equivalently, with \(k=h^{-1}\in G_x\),
	\[
	g=k,
	\qquad
	s_2=k\cdot s_1.
	\]
	Hence the map
	\[
	G_x\times S_x
	\longrightarrow
	(S_x\times S_x)
	{}_{\Phi_0\times\Phi_0}\times_{(s,t)}
	\bigl(G\times(G\times_{G_x}S_x)\bigr),
	\]
	defined by
	\[
	(h,s)
	\longmapsto
	\bigl(s,h\cdot s,h\bigr),
	\]
	is a diffeomorphism. Under this identification, the canonical map
	from the arrow space of \(G_x\ltimes S_x\) to the above fiber product
	is exactly this diffeomorphism.
	
	Therefore \(\Phi\) is fully faithful and essentially surjective.
	Hence
	$
	G_x\ltimes S_x
	$
	and
	$
	G\ltimes(G\times_{G_x}S_x)
	$
	are equivalent groupoids. Consequently, they are Morita equivalent.
\end{proof}

\begin{example} \label{exa c1}
	An action groupoid $G\ltimes M$ with a proper $G$-action is a slice groupoid. One may also construct another slice groupoid that encodes the slice data. Explicitly, let $\{U_\alpha\}_\alpha$ be an open cover of the orbit space $|G\ltimes M|$ such that the slice theorem holds for each $U_\alpha$. We take a slice $S_{\alpha}$ in $M$ and define
\begin{equation*}
		G_0=\bigsqcup_\alpha S_\alpha,
\end{equation*}
	then we have a natural inclusion
	\begin{equation*}
		i:G_0\rightarrow M.
	\end{equation*}
    Let
\begin{equation*}
	G_1=\{(p,q,g)\in G_0\times G_0\times G\text{\ }|\text{\ }g\cdot i(p)=i(q) \},
\end{equation*}
and define the source and target maps by
\begin{equation*}
	\begin{aligned}
		s(p,q,g)&=p\\
		t(p,q,g)&=q.
	\end{aligned}
\end{equation*}
Then the local charts define a slice groupoid $G_1\rightrightarrows G_0$.
\end{example}

\begin{example}
	Orbifold groupoids are slice groupoids because an orbifold is locally modeled by an action groupoid $G\ltimes\mathbb{R}^n$ for a finite group $G$. 
\end{example}

We give one more example of a slice groupoid whose orbit space is not a global quotient.

\begin{example}
	Let $U=D/Z_m$ be a topological space, where $D$ is the open unit disk in $\mathbb{C}\simeq\mathbb{R}^2$ and $\mathbb{Z}_m$ is a finite group whose generator $\tau$ acts on $D$ by
	\begin{equation*}
		\tau\cdot z=e^{i\frac{2\pi}{m}}z.
	\end{equation*} 
Let $V=(D\times S^1)/S^1$ where $S^1$ acts on $D\times S^1$ by 
\begin{equation*}
	e^{i\theta}\cdot(z,e^{i\lambda})=(e^{i\theta}z,e^{i\lambda}).
\end{equation*}
The orbit space $|\mathcal{G}|$ is given by gluing $U$ and $V$ as follows
\begin{equation*}
	|\mathcal{G}|=(U\sqcup V)/\sim
\end{equation*}
where $x\in U$, $y\in V$ are equivalent if 
\begin{equation*}
	\begin{split}
		x=&[re^{i\lambda}],\text{\ \ } r\neq 0\\
		y=&[(1-r)e^{i2\pi}, e^{im\lambda}].
	\end{split}
\end{equation*}  
\begin{center}
	\includegraphics[width=0.5\textwidth]{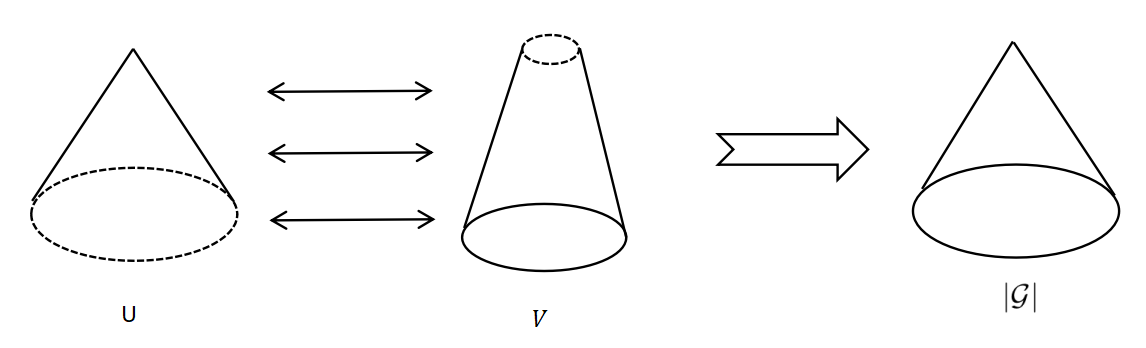}
\end{center}
Explicitly, let $\mathcal{G}=G_1\rightrightarrows G_0$ be a groupoid, where
\begin{equation*}
	G_0=D\sqcup(D\times S^1)
\end{equation*}
and
\begin{equation*}
	\begin{split}
		G_1=&(\mathbb{Z}_m\times D)\sqcup\left(S^1\times(D\times S^1)\right)\\
		&\sqcup\{(x,y)\in D\times(D\times S^1)\text{\ }\big|\text{\ }x=re^{i\theta}, \text{\ }y=(z, e^{im\theta}), \text{\ }0<r<1,\text{\ }|z|=1-r\}\\
		&\sqcup\{(y,x)\in (D\times S^1)\times D\text{\ }\big|\text{\ }x=re^{i\theta}, \text{\ }y=(z, e^{im\theta}), \text{\ }0<r<1,\text{\ }|z|=1-r\}\\
	\end{split}	
\end{equation*}
where the source and target maps on the action-groupoid components generate the group actions on $G_0$, while the source and target maps on the remaining two components generate the gluing arrows and their inverses.

In this example, there are three types of points in $|\mathcal{G}|$:
\begin{itemize}
	\item  If $p=[0,\theta]\in V\subset|\mathcal{G}|$, there is a neighborhood $V'_p\subset|\mathcal{G}|$ of $p$ such that $\mathcal{G}|_{V'_p}$ is Morita equivalent to $S^1\ltimes\left(D_\delta\times(\theta-\epsilon,\theta+\epsilon)\right)$, where $D_\delta$ is a disk with radius $\delta$, $\epsilon$ is a small real number greater than zero, and $S^1$ acts on $D_\delta$ by rotation.\\
	\item  If $q=[0]\in U\subset|\mathcal{G}|$, there is a neighborhood $U'_q\subset|\mathcal{G}|$ of $q$ such that $\mathcal{G}|_{U'_q}$ is Morita equivalent to $\mathbb{Z}_m\ltimes D_\delta$.\\
	\item  The other points are smooth points, i.e. there is a neighborhood $W'$ of each point such that $\mathcal{G}|_{W'}$ is Morita equivalent to $\{e\}\ltimes D_{\delta}$. where $\{e\}$ is the trivial group.
\end{itemize}
Therefore for any point in $|\mathcal{G}|$, there exists a neighborhood that is locally modeled on the quotient of a slice by a compact Lie group action.
\end{example} 

This example illustrates that slice groupoids encompass spaces that are not global quotients, thereby generalizing the framework of orbifolds and proper Lie group actions.


\section{Equivariant cohomology on slice groupoids}

\subsection{Sheaf theory}

In this section we use the local Cartan reduction morphisms, germs,
and the associated espace étalé to define equivariant cohomology on
slice groupoids. We shall start with sheaf theory. The main references for this section are \cite{Bry,Max,Bredon}.

\begin{definition}
Let $\mathcal{F}'$ be a presheaf of sets on a topological space $X$, and let $\mathcal{F}'_p$ be the stalk of $\mathcal{F}'$ at $p\in X$. The sheafification of $\mathcal{F}'$ is defined to be the sheaf $\mathcal{F}$ such that for each open set $U\subset X$,
\begin{align*}
\mathcal{F}(U):=&\{\varphi\in\Gamma(\bigsqcup_{p\in U}\mathcal{F}'_{p})\text{\ }\big|\text{\ } 
\text{\ for every \ } p\in U,\text{\ there is a neighborhood\ } V\subset U \text{\ of $p$ and }\\
&\text{\ a section } \varphi'\in \mathcal{F}'(V)\text{\ with }\varphi(q)=\varphi'_q\in\mathcal{F}'_q \text{\ for all } q\in V \},
\end{align*}
where $\varphi'_q$ is the germ determined by $\varphi'$.
\end{definition}

Note that $\mathcal{F}$ is indeed a sheaf, and the stalks of a presheaf $\mathcal{F}'$ and its sheafification $\mathcal{F}$ are the same at all points.\\

\begin{remark}
The disjoint union of all stalks forms a bundle-like space over $X$, called the \'{e}tal\'{e} space. The sheafification may be realized as the sheaf of continuous sections of this \'{e}tal\'{e} space.
\end{remark}

\begin{definition}
A differential graded sheaf (DGS) $\mathcal{F}^{\bullet}$ is a complex of sheaves $\mathcal{F}^i$ with morphisms $d_i:\mathcal{F}^i\rightarrow \mathcal{F}^{i+1} $ such that $d_{i+1}\circ d_i=0$. The derived sheaf (cohomology sheaf) $\mathcal{H}^{\bullet}\mathcal{F}^{\bullet}$ is the DGS associated to the presheaf $U \longmapsto H^{\bullet}(\mathcal{F}^{\bullet}(U))$ with zero differential. Its stalk at $x\in X$ is $H^{\bullet}(\mathcal{F}^{\bullet}_x)$.
\end{definition}

Thus, for each open set $U\subset X$, we obtain the cohomology groups of the complex $\mathcal{F}^{\bullet}(U)$, regarded as a graded object with zero differential. We do this for every open set to get a presheaf with the restriction map induced by the restriction maps of $\mathcal{F}^{\bullet}$. The sheafification of the presheaf is the cohomology sheaf.\\

\begin{definition}
Let $\phi:K^{\bullet}\rightarrow L^{\bullet}$ be a morphism of complexes of sheaves. If, for every $i$, $\phi$ induces an isomorphism $\mathcal{H}^i(K^{\bullet})\rightarrow \mathcal{H}^i(L^{\bullet})$ of the cohomology sheaves, then $\phi$ is called a quasi-isomorphism.
\end{definition}

\begin{definition}
Let $K^{\bullet}$ be a complex of sheaves on a space $X$ that is bounded below (i.e., its terms vanish for $i<0$). An injective resolution of $K^{\bullet}$ is a double complex of sheaves $I^{\bullet\bullet}$, such that $I^{p\bullet}$ is an injective resolution of the sheaf $K^{p}$.\\

The hypercohomology $\mathbb{H}^i(X; K^{\bullet})$ of a DGS $K^{\bullet}$ is the cohomology of the double complex $\Gamma(X, I^{\bullet\bullet})$, where $\Gamma$ is the global section functor
\begin{equation*}
\mathbb{H}^i(X; K^{\bullet})=H^i\Gamma(X, I^{\bullet\bullet}).
\end{equation*}
\end{definition}
Note that an injective resolution of a bounded-below DGS exists \cite{Bry}.

Although there is a standard way to construct an injective resolution for a sheaf (hence an injective resolution of a DGS), it is in general very hard to explicitly compute the hypercohomology using the injective resolution $I^{\bullet\bullet}$ of a DGS. However one may be able to compute the hypercohomology more directly when the DGS $K^{\bullet}$ is good enough.

\begin{theorem} \label{thme2}
Let $K^{\bullet}$ be a bounded-below complex of sheaves on a space $X$. If each $K^p$ is acyclic (i.e., the sheaf cohomology $H^i(X,K^p)=0$ for $i>0$), then the hypercohomology of $K^{\bullet}$ is isomorphic to the cohomology of the complex
\begin{equation*}
\cdots\rightarrow\Gamma(X, K^p)\rightarrow\Gamma(X, K^{p+1})\rightarrow\cdots.
\end{equation*}
\end{theorem}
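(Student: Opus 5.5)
The plan is to compare the two standard ways of computing hypercohomology: via an injective (Cartan--Eilenberg) resolution, and via the complex of global sections of $K^\bullet$ itself. I will use a double complex and its two spectral sequences, or equivalently a direct quasi-isomorphism argument.

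First, choose an injective resolution $K^\bullet\to I^{\bullet\bullet}$ as in the definition. For each $p$, the column $K^p\to I^{p,0}\to I^{p,1}\to\cdots$ is an injective resolution of $K^p$, and the maps $K^p\to K^{p+1}$ lift compatibly to the resolution. Applying $\Gamma(X,-)$ gives the augmentation morphism
\[
\Gamma(X,K^\bullet)\longrightarrow \operatorname{Tot}\Gamma(X,I^{\bullet\bullet}).
\]
This is a morphism of complexes, since the augmentation commutes with the horizontal differentials. The goal is to show that it induces an isomorphism in cohomology. The left side computes the cohomology of the displayed complex, and the right side is $\mathbb H^*(X;K^\bullet)$ by definition.

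Second, filter the total complex by the column index $p$. This filtration is bounded below in $p$ because $K^\bullet$ is bounded below, and each total degree meets only finitely many terms with $q\ge0$, so the spectral sequence converges. Taking vertical cohomology first gives
\[
E_1^{p,q}=H^q(\Gamma(X,I^{p\bullet}))=H^q(X,K^p),
\]
since $I^{p\bullet}$ is an injective resolution of $K^p$. By the acyclicity hypothesis, $E_1^{p,q}=0$ for $q>0$ and $E_1^{p,0}=\Gamma(X,K^p)$. The $d_1$ differential is induced by the horizontal differential, so it is exactly the differential of $\Gamma(X,K^\bullet)$. Hence $E_2^{p,0}=H^p(\Gamma(X,K^\bullet))$, and the sequence degenerates at $E_2$ because it is concentrated in the row $q=0$. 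Therefore $\mathbb H^p(X;K^\bullet)\cong H^p(\Gamma(X,K^\bullet))$.

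To make the isomorphism canonical, I will instead filter the cone of the augmentation map in the same way. Each column of the cone is acyclic after applying $\Gamma$, because $0\to\Gamma(K^p)\to\Gamma(I^{p,0})\to\Gamma(I^{p,1})\to\cdots$ is exact. Exactness at $\Gamma(I^{p,q})$ for $q>0$ is precisely the acyclicity of $K^p$, and exactness at the left end is left exactness of $\Gamma$. A bounded-below column filtration of a double complex with exact columns has an acyclic total complex, so the augmentation map is a quasi-isomorphism.

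The main obstacle is the convergence and boundedness bookkeeping. One has to check that "bounded below" together with $q\ge0$ ensures each total degree involves only finitely many nonzero pieces, so that the column-exactness argument (or the spectral sequence) applies without completeness issues. The remaining steps are routine.
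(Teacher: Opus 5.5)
Your proposal is correct and is the standard argument. The paper gives no proof of its own: it quotes this as a known fact from its sheaf-theory references, where it is proved exactly by the column-filtration spectral sequence $E_1^{p,q}=H^q(X,K^p)\Rightarrow\mathbb{H}^{p+q}(X;K^{\bullet})$ collapsing to the row $q=0$, or equivalently by your cone argument (the augmented columns are exact by left exactness of $\Gamma$ at the first two spots and by $H^q(X,K^p)=0$ for $q\ge 1$), and the convergence issue you flag is harmless since $p\ge p_0$ and $q\ge -1$ make every antidiagonal finite.
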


\begin{definition}
A sheaf $K$ on a space $X$ is soft if the restriction map $\Gamma(X, K)\rightarrow \Gamma(Y, K)$ is surjective for all closed $Y\subset X$, where $\Gamma(Y, K)$ is defined by the limit of $\Gamma(U, K)$ when $U$ runs through the open sets that contain $Y$.
\end{definition}

\begin{theorem} \label{thme5}
If $X$ is a paracompact Hausdorff space, then soft sheaves are acyclic.
\end{theorem}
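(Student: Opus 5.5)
The plan is to use the standard sheaf-theoretic route: construct a suitable resolution, namely flasque sheaves, and use the fact that on paracompact spaces soft sheaves form a class adapted to the global section functor. Fix a soft sheaf $K$ on the paracompact Hausdorff space $X$.

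\emph{Step 1: soft sheaves have soft quotients.} Show that if $0\to K\to F\to Q\to 0$ is exact with $K$ and $F$ soft, then $Q$ is soft. Also show that the sequence $0\to\Gamma(X,K)\to\Gamma(X,F)\to\Gamma(X,Q)\to 0$ is exact. The key claim is the surjectivity of $\Gamma(Y,F)\to\Gamma(Y,Q)$ for closed $Y$, whenever $K$ is soft.

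To prove this claim, take a section $s\in\Gamma(Y,Q)$ (with $Y=X$ for the global statement). Lift it locally: by paracompactness of the closed subset $Y$, choose a locally finite cover of $Y$ by closed sets $Y_\alpha$ (the closures of a shrinking) on each of which $s$ lifts to some $t_\alpha\in\Gamma(Y_\alpha,F)$. Then run a Zorn's lemma argument on pairs $(Z,t)$, where $Z$ is a union of some of the $Y_\alpha$ and $t$ lifts $s$ on $Z$. Given a maximal pair and some $Y_\beta\not\subset Z$, the difference $t-t_\beta$ on $Z\cap Y_\beta$ is a section of $K$. By softness of $K$ it extends to all of $X$, so we may correct $t_\beta$ and glue, which contradicts maximality. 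Local finiteness ensures that a union of a chain is still closed and that the glued sections are continuous. This is the main technical obstacle: we must handle the limit definition of $\Gamma(Y,\cdot)$ via germs on neighborhoods and the closedness of the unions. I would try to follow Bredon's argument closely here.

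Once the claim holds, softness of $Q$ follows. Given $\sigma\in\Gamma(Y,Q)$, lift it to $\Gamma(Y,F)$, extend it using the softness of $F$, and then project to $Q$.

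\emph{Step 2: dimension shifting.} Embed $K$ into its Godement (canonical flasque) sheaf $\mathcal C^0(K)$ of discontinuous sections. Flasque sheaves are soft, and they are acyclic by the standard Godement resolution argument. Write $0\to K\to\mathcal C^0(K)\to Q^1\to0$. By Step 1, $Q^1$ is soft and $H^1(X,K)=\operatorname{coker}\bigl(\Gamma(X,\mathcal C^0(K))\to\Gamma(X,Q^1)\bigr)=0$. The long exact sequence then gives $H^{i+1}(X,K)\cong H^i(X,Q^1)$ for $i\ge1$, and induction on $i$ applied to the soft sheaf $Q^1$ yields $H^i(X,K)=0$ for all $i>0$.

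Equivalently, the Godement resolution $0\to K\to\mathcal C^0\to\mathcal C^1\to\cdots$ has soft cokernels at each stage. Its global sections therefore form an exact complex in positive degrees, so $K$ is acyclic.
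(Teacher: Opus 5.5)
The paper does not prove Theorem~\ref{thme5}. It quotes it as a standard fact from the sheaf-theory references it cites, so there is no in-paper argument to compare yours against.

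Your proposal is correct, and it is the standard Godement--Bredon proof:
\begin{enumerate}
\item Lift a section through a surjection with soft kernel, using Zorn's lemma over a locally finite closed shrinking of a cover of $Y$.
\item Deduce that the quotient of a soft sheaf by a soft subsheaf is soft, and that $\Gamma$ is exact on such sequences.
\item Dimension-shift along the Godement embedding $K\hookrightarrow\mathcal C^0(K)$, using that flasque sheaves are acyclic.
\end{enumerate}

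One point deserves to be made explicit, and you already flag it. For $Y$ closed in the paracompact Hausdorff space $X$, you need to identify the paper's $\Gamma(Y,K)=\varinjlim_{U\supset Y}\Gamma(U,K)$ with the continuous sections of $K|_Y$. You use this identification twice:
\begin{itemize}
\item to glue the lifts $t_\alpha$ over unions of the $Y_\alpha$;
\item to apply softness of $K$ to the difference $t-t_\beta$ on $Z\cap Y_\beta$, which a priori is only a continuous section over that closed set.
\end{itemize}
This is a second genuine use of paracompactness, beyond producing the cover: closed subsets of $X$ have a fundamental system of paracompact neighbourhoods.

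By contrast, with the paper's direct-limit definition of softness, ``flasque $\Rightarrow$ soft'' is immediate on any space and needs no paracompactness.
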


Thus, if each sheaf $K^p$ in $K^{\bullet}$ is soft or acyclic, then the hypercohomology can be computed from the complex of global sections. We do not have to worry about the injective resolution and the double complex.

\begin{theorem} \label{thme1}
A quasi-isomorphism $\phi:K^{\bullet}\rightarrow L^{\bullet}$ of bounded-below complexes induces isomorphisms
\begin{equation*}
\mathbb{H}^i(X; K^{\bullet})\cong\mathbb{H}^i(X; L^{\bullet})
\end{equation*}
on hypercohomology groups.
\end{theorem}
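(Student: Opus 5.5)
The plan is the standard hypercohomology argument: replace both complexes by their Cartan--Eilenberg resolutions and compare. Let $K^\bullet\to I^{\bullet\bullet}$ and $L^\bullet\to J^{\bullet\bullet}$ be the injective resolutions of Definition above, taken to be Cartan--Eilenberg resolutions. The total complexes $\operatorname{Tot}(I)$ and $\operatorname{Tot}(J)$ are then bounded-below complexes of injective sheaves, and the augmentations $K^\bullet\to\operatorname{Tot}(I)$ and $L^\bullet\to\operatorname{Tot}(J)$ are quasi-isomorphisms. So $\mathbb{H}^i(X;K^\bullet)=H^i\Gamma(X,\operatorname{Tot}(I))$, and similarly for $L^\bullet$.

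First, I will lift $\phi$. Using the comparison theorem for Cartan--Eilenberg resolutions, the morphism $\phi$ extends to a morphism of double complexes $\tilde\phi:I^{\bullet\bullet}\to J^{\bullet\bullet}$. This extension is unique up to homotopy, which makes the induced map on hypercohomology well defined.

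Second, I will show that $\operatorname{Tot}(\tilde\phi)$ is a quasi-isomorphism of complexes of sheaves. It fits into a square with the augmentations and $\phi$; three of the four arrows are quasi-isomorphisms (the two augmentations and $\phi$), so the fourth is one too. It is therefore a quasi-isomorphism between bounded-below complexes of injective sheaves.

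Third, and this is the key step, I will use that a quasi-isomorphism between bounded-below complexes of injectives is a homotopy equivalence. To see this, take its mapping cone $C^\bullet$. It is a bounded-below, acyclic complex of injective sheaves. I will split it inductively from the bottom. At the lowest degree $n_0$, the inclusion $C^{n_0}\hookrightarrow C^{n_0+1}$ splits because $C^{n_0}$ is injective. Its cokernel is then a direct summand of an injective, hence injective, and is isomorphic to the next image. Continuing upward shows every image sheaf $Z^n$ is injective and every short exact sequence $0\to Z^n\to C^n\to Z^{n+1}\to0$ splits. Hence $C^\bullet$ is contractible, and so $\operatorname{Tot}(\tilde\phi)$ is a homotopy equivalence.

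Finally, I will apply $\Gamma(X,-)$. It is additive, so it preserves chain homotopies, and $\Gamma(X,\operatorname{Tot}(\tilde\phi))$ is a homotopy equivalence of complexes of abelian groups. Taking cohomology gives the isomorphism $\mathbb{H}^i(X;K^\bullet)\cong\mathbb{H}^i(X;L^\bullet)$.

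I expect the splitting argument in the third step to be the main obstacle. The point to handle carefully is the bounded-below hypothesis, which is exactly what starts the induction, since unbounded complexes would require K-injective resolutions instead. Alternatively, one can simply cite this step as standard from \cite{Bry} or \cite{Bredon}, since it is the usual statement that hypercohomology is a functor on the derived category.
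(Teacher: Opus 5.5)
Your proposal is correct, and each step holds as stated. The Cartan--Eilenberg comparison theorem gives the lift $\tilde\phi$. Two-out-of-three then shows $\operatorname{Tot}(\tilde\phi)$ is a quasi-isomorphism. Your bottom-up splitting shows that an acyclic bounded-below complex of injective sheaves is split exact and hence contractible, and this is exactly where boundedness below is needed. Finally, $\Gamma(X,-)$, being additive, preserves homotopy equivalences.

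The paper gives no proof of this statement. It is quoted as a standard background fact from the sheaf-theory references (Brylinski, Maxim, Bredon) listed at the start of that subsection, so your argument supplies the textbook proof the paper imports rather than competing with one.

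One point is worth adding. The paper's definition of an injective resolution of $K^\bullet$ asks only for columnwise injective resolutions, not Cartan--Eilenberg ones. Restricting to Cartan--Eilenberg resolutions is harmless, because hypercohomology is independent of the chosen resolution. Any two resolutions give bounded-below injective total complexes receiving quasi-isomorphisms from $K^\bullet$. The standard extension property of bounded-below injective complexes yields a comparison map between them, compatible up to homotopy. That map is a quasi-isomorphism, hence a homotopy equivalence by your third step.
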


\begin{remark}
	This theorem says that when computing the hypercohomology of a DGS $K^{\bullet}$, one may replace it by a more convenient (acyclic or soft) DGS $L^{\bullet}$ that is quasi-isomorphic to $K^{\bullet}$.
\end{remark}

\subsection{Equivariant cohomology of slice groupoids}

We apply the preceding sheaf-theoretic results to generalize equivariant cohomology to slice groupoids.

\begin{theorem}[Morita invariance for action groupoids]
	\label{thm:equivalent-action-groupoids}
	Let
	\[
	\mathcal H=H\ltimes N,
	\qquad
	\mathcal G=G\ltimes M
	\]
	be action Lie groupoids, where \(H\) and \(G\) are compact Lie groups.
	If \(\mathcal H\) and \(\mathcal G\) are equivalent Lie groupoids, then
	there is a canonical isomorphism
	\[
	H_H^*(N;\mathbb R)
	\cong
	H_G^*(M;\mathbb R).
	\]
	More generally, the same conclusion holds if
	\(\mathcal H\) and \(\mathcal G\) are Morita equivalent.
\end{theorem}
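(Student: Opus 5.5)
Let $\phi:\mathcal H\to\mathcal G$ be the given equivalence. The plan is to pass through a common free-action model and compare Borel constructions, in the spirit of the proofs of Theorem~\ref{thm:induction-quasi-isomorphism} and of the equivalence $G_x\ltimes S_x\simeq G\ltimes(G\times_{G_x}S_x)$. Form the fibre product
\[
P=M\,{}_{\mathrm{id}}\times_{t}\bigl(G\times M\bigr)\,{}_{s}\times_{\phi_0}N
\;\cong\;\{(g,n)\in G\times N\}.
\]
Here the last identification sends an arrow $g$ with source $\phi_0(n)$ to the pair $(g,n)$. The group $G$ acts on $P$ on the left by $k\cdot(g,n)=(kg,n)$. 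The group $H$ acts on the right by
\[
(g,n)\cdot h=\bigl(g\,\phi_1(h,\cdot),\,h^{-1}n\bigr),
\]
using the homomorphism to transport arrows.

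\textbf{Step 1: the two actions on $P$ are free, proper and commuting.} Condition (2) of the definition of equivalence (full faithfulness) gives that the right $H$-action on $P$ is free. It also gives that the map $(g,n)\mapsto g\cdot\phi_0(n)$ identifies $P/H$ with $M$ $G$-equivariantly. Condition (1) (open surjectivity of $t\circ\pi_1$) gives surjectivity and openness, so this quotient is a homeomorphism. Symmetrically, $G$ acts freely on $P$ with $G\backslash P\cong N$, and this identification is $H$-equivariant. Properness follows from compactness of $G$ and $H$.

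\textbf{Step 2: identify the Borel constructions.} Choose a contractible $E$ with free right actions of $G$ and of $H$, for instance $E=E_G\times E_H$. Consider
\[
X=(E_G\times E_H\times P)/(G\times H).
\]
Quotienting first by the free $H$-action on $P$ and projecting out the contractible factor $E_H$ gives a fibration $X\to E_G\times_G M$ with contractible fibres, hence a homotopy equivalence. Here we use that the $H$-action on $P$ is free with quotient $M$. Quotienting first by $G$ gives, in the same way, $X\simeq E_H\times_H N$. This yields the isomorphism $H^*_G(M)\cong H^*_H(N)$.

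\textbf{Step 3: canonicity and the Morita case.} Canonicity means showing that the zig-zag is independent of the choices of $E_G$ and $E_H$, up to the usual canonical homotopies. For a Morita equivalence $\mathcal H\leftarrow\mathcal K\rightarrow\mathcal G$, the difficulty is that $\mathcal K$ need not be an action groupoid. I would instead work with the bibundle $P=\mathcal K$-composed bitorsor $G_1\,{}_s\times_{\phi}K_0\,{}_{\psi}\times_{t}H_1$ modulo $K_1$. This is again a $(G,H)$-bitorsor with free proper commuting actions, so Step 2 applies verbatim. Equivalently, compose the two bitorsors.

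\textbf{Main obstacle.} The main obstacle is Step 1: extracting from the purely topological definition of equivalence that $P\to M$ is a locally trivial principal $H$-bundle. Once this is established, Step 2 is standard, and Theorem~\ref{thm:induction-quasi-isomorphism} is then the special case $P=G\times S$. This bundle structure is needed so that the Borel fibrations in Step 2 really are fibrations. I would get it from the Lie groupoid hypothesis: $t\circ\pi_1$ is then a surjective submersion, so it has local sections. Alternatively, I would invoke the slice theorem for the free proper $H$-action. To pass to de Rham models, I would then combine with Theorem~\ref{thma4}.
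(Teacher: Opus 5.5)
Your proof is correct, but it follows a different route from the paper's. The paper reduces everything to a cited fact. It uses $B(H\ltimes N)\simeq E_H\times_H N$ and $B(G\ltimes M)\simeq E_G\times_G M$, together with Moerdijk's theorem that an equivalence of groupoids induces a weak homotopy equivalence of classifying spaces. The Morita case is then immediate: a span $\mathcal H\leftarrow\mathcal K\rightarrow\mathcal G$ gives $B\mathcal H\simeq B\mathcal K\simeq B\mathcal G$, and $B\mathcal K$ is defined even when $\mathcal K$ is not an action groupoid.

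You instead prove the action-groupoid case of that fact by hand. You use the $(G,H)$-bitorsor $P=G_1\,{}_s\times_{\phi_0}N\cong G\times N$ and the double quotient $(E_G\times E_H\times P)/(G\times H)$, which fibres over both Borel constructions with contractible fibres. This has three advantages:
\begin{enumerate}
\item the argument is self-contained and the isomorphism is explicit;
\item it shows exactly where compactness and the Lie hypothesis are used;
\item it recovers the homeomorphism $\bar i$ of Theorem~\ref{thm:induction-quasi-isomorphism} as the case $P=G\times S$.
\end{enumerate}
The cost appears in the Morita case. There you need the standard but nontrivial lemma that the composite bibundle $(P_\phi\times_{K_0}P_\psi^{-1})/\mathcal K$ is a Hausdorff manifold, with free, proper, commuting $G$- and $H$-actions and the correct quotients. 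You assert this rather than prove it, whereas in the paper's route spans cost nothing.

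Two small points.

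First, a morphism $H\ltimes N\to G\ltimes M$ is given by a cocycle, not by a group homomorphism $H\to G$. Writing $\phi_1(h,n')=(\gamma(h,n'),\phi_0(n'))$, your action should read $(g,n)\cdot h=(g\,\gamma(h,h^{-1}n),\,h^{-1}n)$. What makes this an action is the cocycle identity $\gamma(h_1h_2,m)=\gamma(h_1,h_2m)\gamma(h_2,m)$, which is just functoriality of $\phi$.

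Second, your ``main obstacle'' is milder than you state. $H$ is compact and acts smoothly and freely on the manifold $P$, so $P\to P/H$ is already a principal $H$-bundle. Step 1 then shows that $P/H\to M$ is a $G$-equivariant homeomorphism, and that is all Step 2 requires.
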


\begin{proof}
	Recall that the classifying space of an action groupoid is naturally
	homotopy equivalent to the corresponding Borel construction:
	\[
	B(H\ltimes N)
	\simeq
	E_H\times_H N,
	\]
	and
	\[
	B(G\ltimes M)
	\simeq
	E_G\times_G M.
	\]
	
	A groupoid equivalence induces a weak homotopy equivalence between
	classifying spaces \cite{Moe}. Hence an equivalence
	\[
	\mathcal H\longrightarrow\mathcal G
	\]
	induces a weak homotopy equivalence
	\[
	B\mathcal H
	\simeq
	B\mathcal G.
	\]
	The same conclusion holds for a Morita equivalence, since a span of
	groupoid equivalences
	\[
	\mathcal H
	\longleftarrow
	\mathcal K
	\longrightarrow
	\mathcal G
	\]
	induces weak homotopy equivalences
	\[
	B\mathcal H
	\simeq
	B\mathcal K
	\simeq
	B\mathcal G.
	\]
	
	Therefore
	\[
	H^*(E_H\times_H N;\mathbb R)
	\cong
	H^*(E_G\times_G M;\mathbb R).
	\]
	By the definition of equivariant cohomology,
	\[
	H_H^*(N;\mathbb R)
	=
	H^*(E_H\times_H N;\mathbb R)
	\]
	and
	\[
	H_G^*(M;\mathbb R)
	=
	H^*(E_G\times_G M;\mathbb R).
	\]
	Thus
	\[
	H_H^*(N;\mathbb R)
	\cong
	H_G^*(M;\mathbb R).
	\]
\end{proof}

\subsection{The Cartan sheaf of a slice groupoid}
\label{subsec:Cartan-sheaf-slice-groupoid}

Let
\[
\mathcal G:G_1\rightrightarrows G_0
\]
be a slice groupoid and let
$
X=|\mathcal G|
$
be its orbit space.  Fix a compatible system of local linearized
charts
$
\left\{
(U,\Psi_U)
\right\}_{U\in\mathcal B}
$
as in Definition~\ref{def:compatible-local-linearized-charts}.

For every
$
U\in\mathcal B,
$
write
\[
\Psi_U:
\mathcal G|_U
\longrightarrow
G_U\ltimes S_U,
\]
and associate to the chart the Cartan complex
\[
\mathcal A^\bullet(U)
:=
\operatorname{Car}(S_U,G_U).
\]

We emphasize that no restriction morphism
\[
\mathcal A^\bullet(U)
\longrightarrow
\mathcal A^\bullet(V)
\]
is assumed to exist for every inclusion
$
V\subset U.
$
Instead, restriction morphisms are defined only when a compatible
refinement embedding of the corresponding local models is available.

Suppose that
$
V\subset U
$
are chart domains and that there is a refinement embedding
\[
\lambda_{V,U}:
G_V\ltimes S_V
\longrightarrow
G_U\ltimes S_U.
\]
Write the corresponding equivariant pair as
\[
(\jmath_{V,U},i_{V,U}):
(G_V,S_V)
\longrightarrow
(G_U,S_U).
\]
Thus
\[
\jmath_{V,U}:G_V\longrightarrow G_U
\]
is a Lie group homomorphism and
\[
i_{V,U}:S_V\longrightarrow S_U
\]
is a $\jmath_{V,U}$-equivariant smooth embedding.

The refinement embedding determines a Cartan morphism
\[
\rho_{U,V}:
\operatorname{Car}(S_U,G_U)
\longrightarrow
\operatorname{Car}(S_V,G_V)
\]
defined by
\begin{equation}
	\bigl(\rho_{U,V}\alpha\bigr)(\eta)
	=
	i_{V,U}^*
	\left(
	\alpha(d\jmath_{V,U}(\eta))
	\right),
	\qquad
	\eta\in\mathfrak g_V.
\end{equation}

By Proposition~\ref{prop:Cartan-functoriality}, the equivariant pair
\[
(\jmath_{V,U},i_{V,U}):
(G_V,S_V)
\longrightarrow
(G_U,S_U)
\]
induces a morphism of differential graded algebras
\[
\rho_{U,V}:
\operatorname{Car}(S_U,G_U)
\longrightarrow
\operatorname{Car}(S_V,G_V).
\]
In particular,
\[
D_{C,V}\circ\rho_{U,V}
=
\rho_{U,V}\circ D_{C,U}.
\]

We now organize these local Cartan complexes pointwise.

\begin{definition}
	Let
	$
	y\in X.
	$
	The category
	$
	\mathcal{N}_y
	$
	is defined as follows. Its objects are the local linearized charts
	$
	(U,\Psi_U)
	$
	such that
	$
	y\in U.
	$ A morphism
	\[
	U\longrightarrow V
	\]
	in $\mathcal{N}_y$ is represented by a refinement
	\[
	y\in V\subset U
	\]
	together with a refinement embedding
	\[
	\lambda_{V,U}:
	G_V\ltimes S_V
	\longrightarrow
	G_U\ltimes S_U.
	\]
	
	The direction of the morphism is chosen so that the associated Cartan
	morphism has the same direction:
	\[
	\rho_{U,V}:
	\operatorname{Car}(S_U,G_U)
	\longrightarrow
	\operatorname{Car}(S_V,G_V).
	\]
	
	Two refinement morphisms are identified if they agree after passing
	to a further chart neighborhood of $y$.
	
	Composition is induced by composition of refinement embeddings.
\end{definition}

\begin{proposition}
	\label{prop:refinement-category-filtered}
	For every
	$
	y\in X,
	$
	the category
	$
	\mathcal{N}_y
	$
	is filtered.
\end{proposition}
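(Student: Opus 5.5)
We verify the three axioms of a filtered category for $\mathcal N_y$: nonemptiness, existence of a cone over any pair of objects, and equalization of any parallel pair of morphisms. Each will be read off from one clause of Definition~\ref{def:compatible-local-linearized-charts}.

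\emph{Nonemptiness.} Since $\mathcal B$ is a basis for the topology of $X$ (condition (1)), there is some chart domain $U\in\mathcal B$ with $y\in U\subset X$. Hence $\mathcal N_y$ has an object.

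\emph{Cones over pairs of objects.} Let $(U,\Psi_U)$ and $(V,\Psi_V)$ be objects, so $y\in U\cap V$. Condition (2) gives a chart $(W,\Psi_W)$ with $y\in W\subset U\cap V$, together with refinement embeddings $\lambda_{W,U}$ and $\lambda_{W,V}$ induced by equivariant pairs. These are exactly morphisms $U\to W$ and $V\to W$ in $\mathcal N_y$, with Cartan morphisms $\rho_{U,W}$ and $\rho_{V,W}$. So $W$ is a common target.

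\emph{Equalizing parallel morphisms.} This is the step I expect to need care. Consider two morphisms $U\to V$, represented by refinement embeddings $\lambda^{(1)}_{V,U},\lambda^{(2)}_{V,U}$ with $y\in V\subset U$. Condition (4), applied at the point $y\in V$, gives a chart $W\in\mathcal B$ with $y\in W\subset V$ and an embedding $\lambda_{W,V}$ such that the two comparisons agree after restriction to $W$. The intended meaning is
\[
\lambda^{(1)}_{V,U}\circ\lambda_{W,V}=\lambda^{(2)}_{V,U}\circ\lambda_{W,V}
\]
as admissible refinements. Consequently, by condition (3) and Proposition~\ref{prop:Cartan-functoriality},
\[
\rho_{V,W}\circ\rho^{(1)}_{U,V}=\rho^{(2)}_{U,V}\circ\rho_{V,W}\ \text{ replaced by }\ \rho_{V,W}\circ\rho^{(1)}_{U,V}=\rho_{V,W}\circ\rho^{(2)}_{U,V}.
\]
Thus the morphism $V\to W$ equalizes the pair.

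The difficulty lies in matching directions and the identification relation. Morphisms in $\mathcal N_y$ are already taken modulo agreement on a further neighborhood of $y$, so the equalization may hold only at the level of these germ classes; the definition of $\mathcal N_y$ builds that identification in. I would also check that composition is well defined on the identified classes. Suppose $\lambda\sim\lambda'$ agree after some $W$, and $\mu$ is another morphism. Then the composites with $\mu$ agree after a common refinement, which we obtain by applying the cone step (2) again together with the associativity in (3).

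For the Cartan morphisms, I would use the contravariant functoriality $(\varphi_1\circ\varphi_2,f_1\circ f_2)^*=(\varphi_2,f_2)^*\circ(\varphi_1,f_1)^*$. This follows directly from the formula in Proposition~\ref{prop:Cartan-functoriality} by the chain rule: $d(\varphi_1\circ\varphi_2)=d\varphi_1\circ d\varphi_2$ and $(f_1\circ f_2)^*=f_2^*\circ f_1^*$. It lets the identities between embeddings be transported to the equalization of the $\rho$'s. With these three properties, $\mathcal N_y$ is filtered.
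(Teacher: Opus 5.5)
Your proof is correct and is essentially the paper's argument: nonemptiness from the basis condition (1), a common target $W$ for two objects from condition (2), and equalization of a parallel pair from the local coherence condition (4). The extra checks you add go beyond what the paper writes, with two small fixes: clean up the display containing ``replaced by'' so that it reads only $\rho_{V,W}\circ\rho^{(1)}_{U,V}=\rho_{V,W}\circ\rho^{(2)}_{U,V}$, and note that compatibility of composition with the germ identification follows from (4), which already identifies any two parallel morphisms, rather than from (2) and (3) alone.
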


\begin{proof}
	The category is nonempty because the local linearized chart domains
	form a basis of the topology of $X$.
	
	Let
	$
	U \text{\ and\ } V\in\mathcal{N}_y.
	$
	By compatibility of the local linearized charts, there exists
	$
	W\in\mathcal B
	$
	such that
	\[
	y\in W\subset U\cap V
	\]
	and there are refinement embeddings
	\[
	G_W\ltimes S_W
	\longrightarrow
	G_U\ltimes S_U
	\]
	and
	\[
	G_W\ltimes S_W
	\longrightarrow
	G_V\ltimes S_V.
	\]
	Thus there are morphisms
	\[
	U\longrightarrow W
	\qquad\text{and}\qquad
	V\longrightarrow W
	\]
	in $\mathcal{N}_y$.
	
	Suppose now that two parallel refinement morphisms are given.  By
	the local coherence condition in
	Definition~\ref{def:compatible-local-linearized-charts}, after passing
	to a sufficiently small chart neighborhood
	$
	W\ni y,
	$
	the two induced refinement morphisms agree.  Hence the two parallel
	morphisms are equalized after a further refinement.
	Therefore $\mathcal{N}_y$ is filtered.
\end{proof}

The Cartan complexes and the admissible refinement morphisms define a
functor
\[
\mathcal A_y^\bullet:
\mathcal{N}_y
\longrightarrow
\mathbf{CDGA}_{\mathbb R},
\]
given on objects by
\[
U
\longmapsto
\operatorname{Car}(S_U,G_U)
\]
and on morphisms by
\[
U\longrightarrow V
\quad\longmapsto\quad
\rho_{U,V}.
\]

\begin{definition}[Cartan stalk]
	\label{def:Cartan-stalk-slice-groupoid}
	For
	$
	y\in X,
	$
	define the Cartan stalk of the slice groupoid at $y$ by the filtered
	direct limit
	\begin{equation}
		\mathcal C_{\mathcal G,y}^{\bullet}
		:=
		\varinjlim_{U\in\mathcal{N}_y}
		\operatorname{Car}(S_U,G_U).
	\end{equation}
	
	An element of
	$
	\mathcal C_{\mathcal G,y}^{\bullet}
	$
	will be called a \emph{Cartan germ at $y$}.
\end{definition}

More explicitly, a representative of a Cartan germ at $y$ is a pair
$
(U,\alpha),
$
where
$
y\in U\in\mathcal B
$
and
$
\alpha\in\operatorname{Car}(S_U,G_U).
$
Two representatives
$
(U,\alpha)
\text{\ and\ }
(V,\beta)
$
represent the same germ at $y$ if there exists a common refinement
\[
y\in W\subset U\cap V
\]
with refinement morphisms
\[
\rho_{U,W}:
\operatorname{Car}(S_U,G_U)
\longrightarrow
\operatorname{Car}(S_W,G_W)
\]
and
\[
\rho_{V,W}:
\operatorname{Car}(S_V,G_V)
\longrightarrow
\operatorname{Car}(S_W,G_W)
\]
such that
\[
\rho_{U,W}(\alpha)
=
\rho_{V,W}(\beta).
\]
The equivalence class of $(U,\alpha)$ is denoted by
$
[\alpha]_{y,U},
$
or simply by
$
[\alpha]_y.
$
Because the category $\mathcal{N}_y$ is filtered, this relation is an
equivalence relation.

Since every refinement morphism commutes with the Cartan differential,
the differential passes to the filtered direct limit:
\[
D_C:
\mathcal C_{\mathcal G,y}^{k}
\longrightarrow
\mathcal C_{\mathcal G,y}^{k+1},
\]
with
\[
D_C[\alpha]_y
=
[D_C\alpha]_y.
\]
The product also passes to the direct limit.  Hence
$
\mathcal C_{\mathcal G,y}^{\bullet}
$
is a differential graded algebra.

We now construct the associated espace \'etal\'e. For every degree $k$, set
\[
E_{\mathcal G}^k
=
\bigsqcup_{y\in X}
\mathcal C_{\mathcal G,y}^k
\]
and define
\[
p^k:
E_{\mathcal G}^k
\longrightarrow
X
\]
by
\[
p^k([\alpha]_y)=y.
\]
Let
$
U\in\mathcal B
$
and
$ 
\alpha\in\operatorname{Car}^k(S_U,G_U).
$
Define
\[
\widehat\alpha:
U\longrightarrow
E_{\mathcal G}^k
\]
by
\begin{equation}
	\widehat\alpha(y)
	=
	[\alpha]_y.
\end{equation}

We give
$
E_{\mathcal G}^k
$
the topology generated by the subsets
\[
\widehat\alpha(U)
=
\left\{
[\alpha]_y
\mid
y\in U
\right\},
\]
where
$
U\in\mathcal B
$
and
$
\alpha\in\operatorname{Car}^k(S_U,G_U).
$
The common-refinement property implies that these subsets form a
basis for a topology.  Indeed, if
\[
[\alpha]_y=[\beta]_y,
\]
then, by the definition of equality of germs, there exists a common
refinement
$
y\in W
$
on which the two representatives agree.  Consequently,
\[
\widehat\alpha(W)
=
\widehat\beta(W).
\]

With this topology,
\[
p^k:
E_{\mathcal G}^k
\longrightarrow
X
\]
is a local homeomorphism.

\begin{definition}[Cartan sheaf of a slice groupoid]
	\label{def:slice-groupoid-Cartan-sheaf}
	For every open subset
	$
	O\subset X,
	$
	define
	$
	\mathcal C_{\mathcal G}^k(O)
	$
	to be the space of continuous sections of the espace \'etal\'e
	\[
	p^k:E_{\mathcal G}^k\longrightarrow X
	\]
	over $O$:
	\[
	\mathcal C_{\mathcal G}^k(O)
	=
	\left\{
	s:O\longrightarrow E_{\mathcal G}^k
	\;\middle|\;
	p^k\circ s=\operatorname{id}_O,
	\quad
	s\text{ is continuous}
	\right\}.
	\]
	
	If
	$
	V\subset O
	$
	are arbitrary open subsets of $X$, define the restriction morphism by
	ordinary restriction of sections:
	\[
	\operatorname{res}_{O,V}:
	\mathcal C_{\mathcal G}^k(O)
	\longrightarrow
	\mathcal C_{\mathcal G}^k(V),
	\]
	\[
	\operatorname{res}_{O,V}(s)
	=
	s|_V.
	\]
	
	Thus the restriction morphism is now defined for every inclusion
$
	V\subset O
	$
	and does not require a comparison map between slice charts.
	
	The stalkwise Cartan differential defines
	\[
	D_C:
	\mathcal C_{\mathcal G}^k
	\longrightarrow
	\mathcal C_{\mathcal G}^{k+1}
	\]
	by
	\[
	(D_Cs)(y)
	=
	D_C(s(y)).
	\]
	
	Since every section is locally represented by an element of a local
	Cartan complex, the section $D_Cs$ is again continuous.  Therefore
	$
	(\mathcal C_{\mathcal G}^{\bullet},D_C)
	$
	is a complex of sheaves. The product is defined stalkwise, and hence
	\[
	\mathcal C_{\mathcal G}^{\bullet}
	\]
	is a sheaf of differential graded algebras.
\end{definition}
By construction, the stalk of this sheaf at
\[
y\in X
\]
is naturally identified with the filtered direct limit
\[
(\mathcal C_{\mathcal G}^{\bullet})_y
\cong
\mathcal C_{\mathcal G,y}^{\bullet}
=
\varinjlim_{U\in\mathcal{N}_y}
\operatorname{Car}(S_U,G_U).
\]
For every chart domain
\[
U\in\mathcal B,
\]
there is a natural morphism
\[
\operatorname{Car}(S_U,G_U)
\longrightarrow
\Gamma
\left(
U,\mathcal C_{\mathcal G}^{\bullet}
\right)
\]
given by
\[
\alpha
\longmapsto
\widehat\alpha,
\]
where
\[
\widehat\alpha(y)
=
[\alpha]_y.
\]

\begin{definition}
	\label{def:slice-groupoid-Cartan-model}
	The Cartan complex of the slice groupoid $\mathcal G$ is the complex
	of global sections
	\[
	\operatorname{Car}(\mathcal G)
	:=
	\Gamma
	\left(
	X,\mathcal C_{\mathcal G}^{\bullet}
	\right).
	\]
	The equivariant cohomology of $\mathcal G$ is defined by
	\[
	H_{\mathcal G}^*(\mathcal G;\mathbb R)
	:=
	H^*
	\left(
	\operatorname{Car}(\mathcal G),D_C
	\right).
	\]
\end{definition}

Now let \(G\) be a compact Lie group acting smoothly and properly on
a manifold \(M\), and let
\[
\pi:M\longrightarrow X=M/G
\]
be the quotient map. For every open subset \(U\subset X\), define
\[
\mathcal C_{G,M}^{\bullet}(U)
=
\operatorname{Car}(\pi^{-1}(U),G).
\]
The usual restriction of differential forms defines the restriction
maps. Hence
\[
U\longmapsto
\operatorname{Car}(\pi^{-1}(U),G)
\]
is a sheaf of complexes on \(X\), which we call the ordinary Cartan
sheaf of the \(G\)-manifold \(M\). Its global sections are
\[
\Gamma
\left(
X,\mathcal C_{G,M}^{\bullet}
\right)
=
\operatorname{Car}(M,G).
\]

\begin{proposition}
	\label{prop:Cartan-sheaf-quasi-isomorphism}
	Let \(G\) be a compact Lie group acting smoothly and properly on
	\(M\), and let
	\[
	X=M/G.
	\]
	Let \(\mathcal G\) be a slice groupoid representation of the action
	groupoid \(G\ltimes M\), constructed from a compatible system of local linearized charts in the sense of Definition~\ref{def:compatible-local-linearized-charts}.
	
	Then the local Cartan reduction morphisms define a morphism of
	complexes of sheaves
	\[
	\mathcal R:
	\mathcal C_{G,M}^{\bullet}
	\longrightarrow
	\mathcal C_{\mathcal G}^{\bullet}.
	\]
	Moreover, \(\mathcal R\) is a quasi-isomorphism of complexes of
	sheaves.
\end{proposition}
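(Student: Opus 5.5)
The plan is to build $\mathcal R$ chart by chart and glue via the universal property of the espace \'etal\'e, then check the quasi-isomorphism on stalks. For each chart domain $U\in\mathcal B$, the slice groupoid representation of $G\ltimes M$ supplies (as in Example~\ref{exa c1}) a slice $S_U\subset\pi^{-1}(U)$, with $G_U$ the isotropy group, and a $G$-equivariant diffeomorphism $\varphi_U:G\times_{G_U}S_U\to\pi^{-1}(U)$. I take the local map to be $\operatorname{Red}_{C,U}=\operatorname{Red}_C\circ\varphi_U^*:\operatorname{Car}(\pi^{-1}(U),G)\to\operatorname{Car}(S_U,G_U)$; by Proposition~\ref{prop:Cartan-reduction-chain-map} it is a morphism of differential graded algebras. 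Composing with $\alpha\mapsto\widehat\alpha$ gives maps $\mathcal C^\bullet_{G,M}(U)\to\Gamma(U,\mathcal C^\bullet_{\mathcal G})$ for $U\in\mathcal B$. Since $\mathcal B$ is a basis and both objects are sheaves, this family extends uniquely to a sheaf morphism $\mathcal R$, provided it is compatible with restriction.

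The compatibility step is the main obstacle. For $V\subset U$ in $\mathcal B$ with a refinement embedding given by an equivariant pair $(\jmath_{V,U},i_{V,U})$, I must show that for every $y\in V$, $\rho_{U,V}(\operatorname{Red}_{C,U}\alpha)$ and $\operatorname{Red}_{C,V}(\alpha|_{\pi^{-1}(V)})$ have the same germ at $y$. In the slice groupoid attached to $G\ltimes M$, the arrows record elements of $G$, so I expect the embedding to satisfy $\iota_U\circ i_{V,U}=c_g\circ\iota_V$ for some $g\in G$, with $\jmath_{V,U}=\operatorname{Ad}_g$ restricted to $G_V$, where $\iota_U:S_U\hookrightarrow M$ denotes the inclusion. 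Then both composites are pullbacks of $\alpha$ along $\varphi$-equivariant pairs $(G_V,S_V)\to(G,M)$ that differ by the action of $g$. By the $G$-invariance of $\alpha$ (invariance of Cartan elements under conjugation together with $\mathcal L$-equivariance), the pullback along $(\operatorname{Ad}_g,c_g)$ fixes $\alpha$, so the two germs coincide; functoriality from Proposition~\ref{prop:Cartan-functoriality} reduces this to the identity $(\operatorname{Ad}_g,c_g)^*\alpha=\alpha$. If the embeddings are less rigid than this, I would use local coherence (condition (4)) to pass to a smaller $W$ on which they agree with such a standard embedding. Once compatibility holds, $\mathcal R$ commutes with $D_C$ because each $\operatorname{Red}_{C,U}$ and each $\rho$ does.

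For the quasi-isomorphism, I compute on stalks. Cohomology commutes with filtered colimits, so $\mathcal H^*(\mathcal C_{G,M})_y=\varinjlim_{U\ni y}H^*_G(\pi^{-1}(U))$ and $\mathcal H^*(\mathcal C_{\mathcal G})_y=\varinjlim_{\mathcal N_y}H^*(\operatorname{Car}(S_U,G_U))$. Theorem~\ref{thm:slice-reduction-quasi-isomorphism} shows that each $\operatorname{Red}_{C,U}$ is a quasi-isomorphism. Both index systems are cofinal in the neighborhoods of $y$ (by condition (1) and Proposition~\ref{prop:refinement-category-filtered}), and the maps between them are compatible by the previous paragraph. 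Hence the map of colimits is a colimit of isomorphisms and is therefore an isomorphism. To make the cofinality precise, I can restrict to charts whose slices are small invariant balls; Corollary~\ref{cor:local-equivariant-cohomology} then identifies every term with $H^*(BG_x)$ via restriction to the point, so the transition maps are isomorphisms and the colimit is computed at any single chart.
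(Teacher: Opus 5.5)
Your proposal is correct and follows essentially the same route as the paper: the local maps $\operatorname{Red}_C\circ\varphi_U^*$ are assembled into germs, independence of the chart is checked on a common refinement, and the quasi-isomorphism is verified on stalks as a filtered colimit of the quasi-isomorphisms from Theorem~\ref{thm:slice-reduction-quasi-isomorphism}. Your observation that the refinement embeddings are $g$-translates, so that $G$-invariance of Cartan elements gives $\rho_{U,V}\circ\operatorname{Red}_{C,U}=\operatorname{Red}_{C,V}\circ\operatorname{res}$, usefully spells out the naturality that the paper only asserts. Two caveats: you should phrase this check through a common refinement $W$, since a direct embedding $V\to U$ need not exist; and your closing remark about small balls is unnecessary, and it holds only for charts centered on the orbit over $y$.
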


\begin{proof}
	We first construct the morphism of sheaves.
	Let
	$
	O\subset X
	$
	be open and let
	\[
	\alpha\in
	\mathcal C_{G,M}^{\bullet}(O)
	=
	\operatorname{Car}(\pi^{-1}(O),G).
	\]
	For every point
	$
	y\in O,
	$
	choose a chart domain
	$
	U\in\mathcal B
	$
	such that
	$
	y\in U\subset O.
	$
	For the slice-groupoid representation associated with the
	$G$-action, the chart over $U$ is represented by a slice model
	$
	G_U\ltimes S_U.
	$
	The slice reduction theorem gives a chain map
	\[
	\mathcal R_U:
	\operatorname{Car}(\pi^{-1}(U),G)
	\longrightarrow
	\operatorname{Car}(S_U,G_U).
	\]
	Define
	$
	(\mathcal R_O\alpha)(y)
	$
	to be the germ represented by
	$
	\mathcal R_U
	\left(
	\alpha|_{\pi^{-1}(U)}
	\right).
	$
	Thus
	\begin{equation}
		(\mathcal R_O\alpha)(y)
		=
		\left[
		\mathcal R_U
		\left(
		\alpha|_{\pi^{-1}(U)}
		\right)
		\right]_y.
	\end{equation}
	
	We must show that this definition is independent of the chosen chart
	$U$.
	Suppose
	$
	U,V\in\mathcal B
	$
	both contain $y$ and are contained in $O$.  By the compatibility of
	the local linearized charts, there exists a common refinement
	\[
	y\in W\subset U\cap V
	\]
	together with refinement embeddings from the model over $W$ to the
	models over $U$ and $V$.
	
	Naturality of the Cartan reduction morphism gives
	\[
	\rho_{U,W}
	\circ
	\mathcal R_U
	=
	\mathcal R_W
	\circ
	\operatorname{res}_{U,W}
	\]
	and
	\[
	\rho_{V,W}
	\circ
	\mathcal R_V
	=
	\mathcal R_W
	\circ
	\operatorname{res}_{V,W}.
	\]
	Therefore
	\[
	\rho_{U,W}
	\left(
	\mathcal R_U
	(\alpha|_{\pi^{-1}(U)})
	\right)
	=
	\mathcal R_W
	(\alpha|_{\pi^{-1}(W)})
	\]
	and
	\[
	\rho_{V,W}
	\left(
	\mathcal R_V
	(\alpha|_{\pi^{-1}(V)})
	\right)
	=
	\mathcal R_W
	(\alpha|_{\pi^{-1}(W)}).
	\]
	Hence the two representatives determine the same germ at $y$. Thus
	$
	(\mathcal R_O\alpha)(y)
	$
	is well defined.
	
	Moreover, in a sufficiently small slice neighborhood $U$, the section
	$
	\mathcal R_O\alpha
	$
	is represented by the single Cartan element
	$
	\mathcal R_U
	(\alpha|_{\pi^{-1}(U)}).
	$
	Therefore it is a continuous section of the espace \'etal\'e.
	Consequently,
	\[
	\mathcal R_O:
	\mathcal C_{G,M}^{\bullet}(O)
	\longrightarrow
	\mathcal C_{\mathcal G}^{\bullet}(O)
	\]
	is well defined.
	
	If
	$
	V\subset O
	$
	is open, the construction immediately gives
	\[
	\operatorname{res}_{O,V}
	\circ
	\mathcal R_O
	=
	\mathcal R_V
	\circ
	\operatorname{res}_{O,V}.
	\]
	Therefore the morphisms $\mathcal R_O$ define a morphism of complexes
	of sheaves
	\[
	\mathcal R:
	\mathcal C_{G,M}^{\bullet}
	\longrightarrow
	\mathcal C_{\mathcal G}^{\bullet}.
	\]
	Since each local slice reduction morphism commutes with the Cartan
	differential, one has
	\[
	D_C\circ\mathcal R
	=
	\mathcal R\circ D_C.
	\]
	
	It remains to prove that $\mathcal R$ is a quasi-isomorphism. Fix
	$
	y\in X.
	$
	The stalk of the ordinary Cartan sheaf is
	\[
	(\mathcal C_{G,M}^{\bullet})_y
	=
	\varinjlim_{y\in O}
	\operatorname{Car}(\pi^{-1}(O),G).
	\]
	Because the slice-chart domains form a basis for the topology of $X$,
	the direct limit may be computed on the cofinal system of slice
	neighborhoods:
	\[
	(\mathcal C_{G,M}^{\bullet})_y
	\cong
	\varinjlim_{U\in\mathcal{N}_y}
	\operatorname{Car}(\pi^{-1}(U),G).
	\]
	
	By construction of the Cartan sheaf of the slice groupoid,
	\[
	(\mathcal C_{\mathcal G}^{\bullet})_y
	\cong
	\varinjlim_{U\in\mathcal{N}_y}
	\operatorname{Car}(S_U,G_U).
	\]
	
	The induced morphism on stalks is therefore the filtered direct limit
	of the local slice reduction morphisms
	\[
	\mathcal R_U:
	\operatorname{Car}(\pi^{-1}(U),G)
	\longrightarrow
	\operatorname{Car}(S_U,G_U).
	\]
	For every
	$
	U\in\mathcal{N}_y,
	$
	Theorem~\ref{thm:slice-reduction-quasi-isomorphism} implies that
	$
	\mathcal R_U
	$
	is a quasi-isomorphism.
	
	Since $\mathcal{N}_y$ is filtered and filtered direct limits of real
	vector spaces are exact, cohomology commutes with these direct limits.
	Hence
	\begin{align*}
		H^q
		\left(
		(\mathcal C_{G,M}^{\bullet})_y
		\right)
		&\cong
		\varinjlim_{U\in\mathcal{N}_y}
		H^q
		\left(
		\operatorname{Car}(\pi^{-1}(U),G)
		\right)
		\\
		&\xrightarrow{\cong}
		\varinjlim_{U\in\mathcal{N}_y}
		H^q
		\left(
		\operatorname{Car}(S_U,G_U)
		\right)
		\\
		&\cong
		H^q
		\left(
		(\mathcal C_{\mathcal G}^{\bullet})_y
		\right).
	\end{align*}
	Thus the induced morphism
	\[
	H^q
	\left(
	(\mathcal C_{G,M}^{\bullet})_y
	\right)
	\longrightarrow
	H^q
	\left(
	(\mathcal C_{\mathcal G}^{\bullet})_y
	\right)
	\]
	is an isomorphism for every
	$
	q
	$
	and every
	$
	y\in X.
	$
	
	Therefore
	\[
	\mathcal R:
	\mathcal C_{G,M}^{\bullet}
	\longrightarrow
	\mathcal C_{\mathcal G}^{\bullet}
	\]
	is a quasi-isomorphism of complexes of sheaves.
\end{proof}

\begin{proposition}
	\label{prop:Cartan-sheaves-fine}
	Assume that \(X=M/G\) is paracompact and that for every locally finite
	open covering
	$
	\{U_\alpha\}
	$
	of \(X\), there exists a subordinate partition of unity
	$
	\{\rho_\alpha\}
	$
	such that every pullback
	$
	\pi^*\rho_\alpha
	$
	is a smooth \(G\)-invariant function on \(M\).
	Then, for every degree \(q\), the sheaves
	$
	\mathcal C_{G,M}^{q}\text{\ and\ }
	\mathcal C_{\mathcal G}^{q}
	$
	are fine. In particular, they are acyclic.
\end{proposition}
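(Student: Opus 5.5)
The plan is to show that both sheaves are modules over the sheaf $\mathcal C^\infty_X$ of functions on $X$ whose pullbacks to $M$ are smooth $G$-invariant functions. The hypothesis then provides partitions of unity in this ring, so both sheaves are fine. Acyclicity follows because fine sheaves on a paracompact Hausdorff space are soft, hence acyclic by Theorem~\ref{thme5}.

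\textbf{The ordinary Cartan sheaf.} For an open $O\subset X$, a function $f$ on $O$ with $\pi^*f\in C^\infty(\pi^{-1}(O))^G$, and $\alpha\in\operatorname{Car}^q(\pi^{-1}(O),G)$, I set
\[
f\cdot\alpha:=(\pi^*f)\,\alpha .
\]
Because $\pi^*f$ is $G$-invariant, this product is again $G$-equivariant as a polynomial map $\mathfrak g\to\Omega(\pi^{-1}(O))$. It also commutes with restriction, since multiplication by a function is compatible with restriction of forms. Now let $\{U_\alpha\}$ be a locally finite cover of $X$ with subordinate partition of unity $\{\rho_\alpha\}$ as in the hypothesis. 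I define the endomorphism $h_\alpha$ of $\mathcal C^q_{G,M}$ as multiplication by $\pi^*\rho_\alpha$. Its support lies in $\operatorname{supp}\rho_\alpha\subset U_\alpha$, and $\sum_\alpha h_\alpha=\mathrm{id}$ is a locally finite sum. Hence $\mathcal C^q_{G,M}$ is fine. The endomorphisms $h_\alpha$ need not commute with $D_C$, but fineness is a degreewise property, so this causes no problem.

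\textbf{The slice-groupoid Cartan sheaf.} Here I use the $\mathcal C^\infty_X$-module structure on the stalks. For a chart $U\in\mathcal B$ with quotient map $q_U:S_U\to U$, define multiplication on $\operatorname{Car}(S_U,G_U)$ by $\beta\mapsto(q_U^*\rho)\beta$. To see that $q_U^*\rho$ is smooth and $G_U$-invariant, I realize $q_U^*\rho$ as $i^*\pi^*\rho$, where $i$ is the slice inclusion, using the identification of the chart with a slice $S_U\subset M$ through $G\times_{G_U}S_U\cong\pi^{-1}(U)$.

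Next I check that this multiplication is compatible with the refinement morphisms $\rho_{U,W}$. This holds because $q_U\circ i_{W,U}=\iota_{W,U}\circ q_W$, so
\[
i_{W,U}^*q_U^*\rho=q_W^*\rho|_W .
\]
Consequently multiplication by $\rho$ passes to the filtered colimits $\mathcal C^\bullet_{\mathcal G,y}$. It also preserves continuity of sections, since it sends $\widehat\beta$ to $\widehat{(q_U^*\rho)\beta}$. The same partition $\{\rho_\alpha\}$ therefore gives a locally finite family of endomorphisms of $\mathcal C^q_{\mathcal G}$ with supports in $U_\alpha$ that sum to the identity. So $\mathcal C^q_{\mathcal G}$ is fine.

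The support claim for a section $s$ needs one remark. If $y\notin\operatorname{supp}\rho_\alpha$, then $\rho_\alpha$ vanishes on a chart neighbourhood $U$ of $y$, so $(q_U^*\rho_\alpha)\beta=0$ there and the germ vanishes near $y$.

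The main obstacle is justifying that $q_U^*\rho$ is smooth on $S_U$ for an abstract chart. For the slice-groupoid representation of $G\ltimes M$ (Example~\ref{exa c1}), the charts are actual slices in $M$, so this reduces to restricting the smooth function $\pi^*\rho$. I will state explicitly that this is the setting in which the proposition is applied.
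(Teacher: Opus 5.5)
Your proposal is correct and follows the paper's proof. You multiply by $\pi^*\rho_\alpha$ on $\mathcal C^q_{G,M}$ and by $q_U^*(\rho_\alpha|_U)$ on each $\operatorname{Car}^q(S_U,G_U)$, show via $q_U\circ i_{W,U}=\iota_{W,U}\circ q_W$ that this descends to Cartan germs, and then conclude fine $\Rightarrow$ soft $\Rightarrow$ acyclic. Two of your additions fill steps the paper only asserts: writing $q_U^*\rho=i^*\pi^*\rho$ for a genuine slice $S_U\subset M$ to get smoothness, and the germ-level check that $\operatorname{supp}\widetilde h_\alpha\subset U_\alpha$.
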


\begin{proof}
	Let
	$
	\{U_\alpha\}
	$
	be a locally finite open covering of \(X\), and choose a subordinate
	partition of unity
	$
	\{\rho_\alpha\}
	$
	as in the hypothesis.
	
	For the ordinary Cartan sheaf, define
	\[
	h_\alpha:
	\mathcal C_{G,M}^{q}
	\longrightarrow
	\mathcal C_{G,M}^{q}
	\]
	locally by
	\[
	h_\alpha(\omega)
	=
	(\pi^*\rho_\alpha)\omega.
	\]
	Since \(\pi^*\rho_\alpha\) is smooth and \(G\)-invariant,
	multiplication by \(\pi^*\rho_\alpha\) preserves the Cartan complex.
	Moreover,
	\[
	\operatorname{supp}(h_\alpha)
	\subset U_\alpha
	\]
	and
	\[
	\sum_\alpha h_\alpha
	=
	\operatorname{id}.
	\]
	Thus \(\mathcal C_{G,M}^{q}\) is fine. For the slice-groupoid Cartan sheaf, let
$
(U,G_U\ltimes S_U)
$
be a local linearized chart. The function
$
\rho_\alpha|_U
$
pulls back along the quotient map
\[
q_U:S_U\longrightarrow S_U/G_U\simeq U
\]
to a smooth $G_U$-invariant function
$
q_U^*(\rho_\alpha|_U)
$
on $S_U$. Multiplication by this function defines an endomorphism
\[
h_{\alpha,U}:
\operatorname{Car}^q(S_U,G_U)
\longrightarrow
\operatorname{Car}^q(S_U,G_U)
\]
given by
\[
h_{\alpha,U}(\omega)
=
q_U^*(\rho_\alpha|_U)\,\omega.
\]

We show that this operation is compatible with Cartan germs.
Suppose that
$
V\subset U
$
is an admissible refinement with equivariant embedding
\[
i_{V,U}:S_V\longrightarrow S_U.
\]
Since the refinement embedding induces the inclusion
\[
V\hookrightarrow U
\]
on orbit spaces, one has
\[
q_U\circ i_{V,U}
=
\iota_{V,U}\circ q_V.
\]
Consequently,
\[
i_{V,U}^*
q_U^*(\rho_\alpha|_U)
=
q_V^*(\rho_\alpha|_V).
\]

It follows that
\[
\rho_{U,V}
\left(
q_U^*(\rho_\alpha|_U)\,\omega
\right)
=
q_V^*(\rho_\alpha|_V)\,
\rho_{U,V}(\omega).
\]

Therefore multiplication by $\rho_\alpha$ preserves the equivalence
relation defining Cartan germs.  Hence it induces a well-defined
endomorphism on every stalk
$
(\mathcal C_{\mathcal G}^{q})_y.
$
Equivalently, for a germ represented by
\[
[\omega]_y,
\qquad
\omega\in\operatorname{Car}^q(S_U,G_U),
\]
define
\[
\widetilde h_\alpha([\omega]_y)
=
\left[
q_U^*(\rho_\alpha|_U)\,\omega
\right]_y.
\]

This is independent of the choice of representative.  Since the
operation is locally represented by multiplication in a local Cartan
complex, it defines a sheaf endomorphism
\[
\widetilde h_\alpha:
\mathcal C_{\mathcal G}^{q}
\longrightarrow
\mathcal C_{\mathcal G}^{q}.
\]
Moreover,
\[
\operatorname{supp}(\widetilde h_\alpha)
\subset U_\alpha
\]
and
\[
\sum_\alpha\widetilde h_\alpha
=
\operatorname{id}.
\]
Hence
$
\mathcal C_{\mathcal G}^{q}
$
is fine.
	
	Fine sheaves on a paracompact Hausdorff space are soft and therefore
	acyclic. Thus both sheaves are acyclic in every degree.
\end{proof}

Let $G\ltimes M$ be an action groupoid, and $\mathcal{G}$ be a slice groupoid which represents $G\ltimes M$ given as in example \ref{exa c1}, we will show that the ordinary equivariant cohomology of $G\ltimes M$ is isomorphic to the equivariant cohomology of $\mathcal{G}$. 

Note that as in the example \ref{exa c1}, we assume that $G$ acts on $M$ properly. Also, to simplify the computation, we assume that $M/G$ is paracompact, and for any covering of $M/G$ there is a partition of unity of continuous functions subordinate to the covering, and these functions are $C^1$ when being pulled back to $M$ by the projection $M\rightarrow M/G$.

\begin{theorem}
	\label{thm:slice-groupoid-equivariant-cohomology}
	Let \(G\) be a compact Lie group acting smoothly and properly on a
	manifold \(M\), and let
	$
	X=M/G.
	$
	Let \(\mathcal G\) be a slice groupoid representation of the action
	groupoid
	$
	G\ltimes M
	$
	associated with a compatible family of slices.
	
	Assume that \(X\) is paracompact Hausdorff and that every locally
	finite open covering of \(X\) admits a subordinate partition of unity
	whose pullback to \(M\) consists of smooth \(G\)-invariant functions.
	
	Then there is a natural isomorphism
	\[
	H_{\mathcal G}^*(\mathcal G;\mathbb R)
	\cong
	H_G^*(M;\mathbb R).
	\]
\end{theorem}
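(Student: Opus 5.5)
The plan is to compare the two sheaf complexes on $X$ through hypercohomology. The morphism of complexes of sheaves
\[
\mathcal R:\mathcal C_{G,M}^{\bullet}\longrightarrow\mathcal C_{\mathcal G}^{\bullet}
\]
of Proposition~\ref{prop:Cartan-sheaf-quasi-isomorphism} is a quasi-isomorphism. Both complexes are bounded below, since Cartan degrees are nonnegative. Theorem~\ref{thme1} therefore gives an isomorphism
\[
\mathbb H^*(X;\mathcal C_{G,M}^{\bullet})\cong\mathbb H^*(X;\mathcal C_{\mathcal G}^{\bullet}).
\]
Under the stated partition-of-unity hypothesis, Proposition~\ref{prop:Cartan-sheaves-fine} shows that every $\mathcal C_{G,M}^q$ and every $\mathcal C_{\mathcal G}^q$ is fine, hence soft and acyclic on the paracompact Hausdorff space $X$ (Theorem~\ref{thme5}). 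Theorem~\ref{thme2} then identifies each hypercohomology with the cohomology of global sections. On the left the global sections are $\Gamma(X,\mathcal C_{G,M}^\bullet)=\operatorname{Car}(M,G)$. On the right they are $\Gamma(X,\mathcal C_{\mathcal G}^\bullet)=\operatorname{Car}(\mathcal G)$, by Definition~\ref{def:slice-groupoid-Cartan-model}.

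Chaining these identifications gives
\[
H^*(\operatorname{Car}(M,G))\cong \mathbb H^*(X;\mathcal C_{G,M}^\bullet)\cong\mathbb H^*(X;\mathcal C_{\mathcal G}^\bullet)\cong H^*(\operatorname{Car}(\mathcal G))=H^*_{\mathcal G}(\mathcal G;\mathbb R).
\]
Since $G$ is compact, Theorem~\ref{thma4} identifies $H^*(\operatorname{Car}(M,G))$ with $H_G^*(M;\mathbb R)$, which completes the chain.

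For naturality, I will check that the composite isomorphism is induced by the global-section map
\[
\mathcal R_X:\operatorname{Car}(M,G)\to\operatorname{Car}(\mathcal G),\qquad \alpha\mapsto\bigl(y\mapsto[\mathcal R_U(\alpha|_{\pi^{-1}(U)})]_y\bigr).
\]
This holds because the isomorphism of Theorem~\ref{thme2} is natural in morphisms of acyclic complexes. Concretely, choose injective (Cartan–Eilenberg) resolutions compatibly with $\mathcal R$. The comparison maps from global sections into the resolved double complexes then commute with $\mathcal R_X$ and with the induced map of resolutions. So $H^*(\mathcal R_X)$ is exactly the isomorphism above, and no choices enter beyond those already fixed in defining $\mathcal R$.

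The main obstacle is verifying the hypotheses of Proposition~\ref{prop:Cartan-sheaf-quasi-isomorphism} for the slice groupoid of Example~\ref{exa c1}. In particular, one needs the naturality relation $\rho_{U,W}\circ\mathcal R_U=\mathcal R_W\circ\operatorname{res}_{U,W}$ for refinement embeddings. For slices inside $M$ the refinement pair $(\jmath_{W,U},i_{W,U})$ is given by an element $g\in G$ with $g S_W\subset S_U$ and $\jmath_{W,U}=\mathrm{Ad}_g$ on isotropy groups. Since $\alpha\in\operatorname{Car}(\pi^{-1}(U),G)$ is $G$-equivariant, pulling back along $x\mapsto g\cdot x$ and twisting the polynomial variable by $\mathrm{Ad}_g$ returns $\alpha$. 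This gives the relation via Proposition~\ref{prop:Cartan-functoriality}, applied to the composite equivariant pairs $(H_W,S_W)\to(H_U,S_U)\to(G,\pi^{-1}(U))$ and $(H_W,S_W)\to(G,\pi^{-1}(W))\to(G,\pi^{-1}(U))$, which coincide up to the action of $g$. I would also point out that the $C^1$ assumption stated before the theorem is replaced by the smooth $G$-invariance hypothesis in the statement, which is exactly what Proposition~\ref{prop:Cartan-sheaves-fine} uses.
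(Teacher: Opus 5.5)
Your argument is the paper's proof. The quasi-isomorphism $\mathcal R$ of Proposition~\ref{prop:Cartan-sheaf-quasi-isomorphism} gives an isomorphism on hypercohomology. Fineness from Proposition~\ref{prop:Cartan-sheaves-fine} then reduces both sides to the cohomology of the global sections $\operatorname{Car}(M,G)$ and $\operatorname{Car}(\mathcal G)$, and Theorem~\ref{thma4} finishes.

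Your additions fill in points the paper only asserts: you identify the isomorphism as $H^*(\mathcal R_X)$, and you verify $\rho_{U,W}\circ\mathcal R_U=\mathcal R_W\circ\operatorname{res}_{U,W}$ via $G$-invariance when refinements are translations by a fixed $g\in G$. The second check is a constraint on how the compatible family of slices is chosen, not a consequence of Definition~\ref{def:compatible-local-linearized-charts}. A general admissible refinement embedding may move points by a non-constant $g(s)$, and then this identity need not hold.
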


\begin{proof}
	Let
	$
	\mathcal C_{G,M}^{\bullet}
	$
	be the ordinary Cartan sheaf on
	$
	X=M/G,
	$
	and let
	$
	\mathcal C_{\mathcal G}^{\bullet}
	$
	be the Cartan sheaf of the slice groupoid. By Proposition~\ref{prop:Cartan-sheaf-quasi-isomorphism}, the local
	slice reduction morphisms define a quasi-isomorphism of complexes of
	sheaves
	\[
	\mathcal R:
	\mathcal C_{G,M}^{\bullet}
	\longrightarrow
	\mathcal C_{\mathcal G}^{\bullet}.
	\]
	Therefore the induced map on hypercohomology is an isomorphism:
	\[
	\mathbb H^*
	\left(
	X;
	\mathcal C_{G,M}^{\bullet}
	\right)
	\cong
	\mathbb H^*
	\left(
	X;
	\mathcal C_{\mathcal G}^{\bullet}
	\right).
	\]
	
	By Proposition~\ref{prop:Cartan-sheaves-fine}, each sheaf
	$
	\mathcal C_{G,M}^{q}
	\text{\ and\ }
	\mathcal C_{\mathcal G}^{q}
	$
	is fine, hence acyclic. Therefore the hypercohomology of each complex
	is computed by its complex of global sections:
	\[
	\mathbb H^*
	\left(
	X;
	\mathcal C_{G,M}^{\bullet}
	\right)
	\cong
	H^*
	\left(
	\Gamma
	\left(
	X,\mathcal C_{G,M}^{\bullet}
	\right)
	\right)
	\]
	and
	\[
	\mathbb H^*
	\left(
	X;
	\mathcal C_{\mathcal G}^{\bullet}
	\right)
	\cong
	H^*
	\left(
	\Gamma
	\left(
	X,\mathcal C_{\mathcal G}^{\bullet}
	\right)
	\right).
	\]
	
	By construction,
	\[
	\Gamma
	\left(
	X,\mathcal C_{G,M}^{\bullet}
	\right)
	=
	\operatorname{Car}(M,G),
	\]
	whereas Definition~\ref{def:slice-groupoid-Cartan-model} gives
	\[
	\Gamma
	\left(
	X,\mathcal C_{\mathcal G}^{\bullet}
	\right)
	=
	\operatorname{Car}(\mathcal G).
	\]
	Consequently,
	\[
	H^*
	\left(
	\operatorname{Car}(M,G)
	\right)
	\cong
	H^*
	\left(
	\operatorname{Car}(\mathcal G)
	\right).
	\]
	
	Since \(G\) is compact, Theorem~\ref{thma4} gives
	\[
	H^*
	\left(
	\operatorname{Car}(M,G)
	\right)
	\cong
	H_G^*(M;\mathbb R).
	\]
	By the definition of the equivariant cohomology of the slice
	groupoid,
	\[
	H^*
	\left(
	\operatorname{Car}(\mathcal G)
	\right)
	=
	H_{\mathcal G}^*(\mathcal G;\mathbb R).
	\]
	Hence
	\[
	H_G^*(M;\mathbb R)
	\cong
	H_{\mathcal G}^*(\mathcal G;\mathbb R).
	\]
\end{proof}

This theorem shows that the equivariant cohomology of slice groupoids indeed generalizes the ordinary equivariant cohomology of Lie group actions on manifolds. 


\end{document}